\newtheorem{Lemma}{Lemma}[section]
\newtheorem{Theorem}{Theorem}[section]
\newtheorem{Definition}{Definition}[section]
\newtheorem{Proposition}{Proposition}[section]
\newtheorem{Corollary}{Corollary}[section]
\numberwithin{equation}{section} \allowdisplaybreaks
\newcommand{\LC}{\left(}
\newcommand{\RC}{\right)}
\def\R{\mathop{\mathbb R\kern 0pt}\nolimits}
\def\p{\partial}
\newcommand{\beno}{\begin{eqnarray*}}
\newcommand{\eeno}{\end{eqnarray*}}
\let\al=\alpha
\let\ga=\gamma
\def\be{\begin{equation}}
\def\beq{\begin{equation}}
\def\bel{\begin{equation}\label}
\def\eeq{\end{equation}}
\def\bega{\begin{array}}
\def\enda{\end{array}}
\def\begi{\begin{itemize}}
\def\endi{\end{itemize}}
\begin{document}
\date{}

\title{\bf Lipschitz optimal transport metric for a wave system modeling nematic liquid crystals}

\author[1]{Hong Cai\thanks{Email: caihong19890418@163.com (Hong Cai)}}
\author[2]{Geng Chen\thanks{Email: gengchen@ku.edu (Geng Chen)}}
\author[2]{Yannan Shen\thanks{Email: yshen@ku.edu (Yannan Shen)}}

\affil[1]{\it{\small School of Mathematics and Physics, Qingdao University of Science and Technology, Qingdao, 266061, P.R. China.}}
\affil[2]{\it{\small Department of Mathematics, University of Kansas, Lawrence, KS 66045, USA. }}

\renewcommand\Authands{ and }



\maketitle

\begin{abstract}
In this paper, we study the Lipschitz continuous dependence of conservative H\"older continuous weak solutions to a variational wave system derived from a model for nematic liquid crystals. Since the solution of this system generally forms finite time cusp singularity, the solution flow is not Lipschitz continuous under the Sobolev metric used in the existence and uniqueness theory. We establish a Finsler type optimal transport metric, and show the Lipschitz continuous dependence of solution on the initial data under this metric. This kind of Finsler type optimal transport metrics was first established in [A. Bressan and G. Chen, Arch. Ration. Mech. Anal. 226(3) (2017), 1303-1343]  for the scalar variational wave equation. This equation can be used to describe the unit direction $\mathbf{n}$ of  mean orientation of nematic liquid crystals, when $\mathbf{n}$ is restricted on a circle. The model considered in this paper describes the propagation of $\mathbf{n}$ without this restriction, i.e. $\mathbf{n}$ takes any value on the unite sphere. So we need to consider a wave system instead of a scalar equation.
 \bigbreak
\noindent

{\bf \normalsize Keywords.} {System of wave equations; Liquid crystal;\, Lipschitz metric;\, Singularity}
\bigbreak

\end{abstract}

\section{Introduction}
\setcounter{equation}{0}
In this paper, we study the Lipschitz continuous dependence for solutions of the variational wave system
\begin{equation} \label{vwl}
\partial_{tt}n_i-\partial_x\big(c^2(n_1)\partial_x n_i\big)=\bigl(-|{\mathbf n}_t|^2+\big(2c^2(n_1)-\zeta_i\big)|{\mathbf n}_x|^2\bigr)n_i,\qquad
i=1,2,3.
\end{equation}
The time $t$ and space variable $x$ belong to $\R^+$ and $\R$, respectively,
and the unit vector ${\mathbf n}=(n_1,n_2, n_3)$ satisfies
\begin{equation}\label{n1}
|{\mathbf n}|=1.
\end{equation}
The (positive) wave speed $c$ depends on $n_1$ with
\beq\label{c-def}
c^2(n_1)=\alpha+(\gamma-\alpha)n_1^2.
\eeq
The constants
\[
 \zeta_1=\ga>0\quad\hbox{and}\quad\zeta_2=\zeta_3=\al>0.
 \]
In this paper, we consider the initial value problem with inital data satisfying
\begin{equation}\label{ID}
 n_i|_{t=0}={n_i}_0\in H^1(\mathbb{R}),\quad
(n_i)_t|_{t=0}={n_i}_1\in L^2(\mathbb{R}), \quad i=1, 2, 3.
\end{equation}

We briefly introduce the origin of system \eqref{vwl} from modelling nematic liquid crystal. Liquid crystal is often viewed as an intermediate state between liquid and solid. More precisely, a nematic crystal can be described, when we ignore the motion of the fluid, by the dynamics of the so-called director field of unit vectors ${\mathbf n}\in{\mathbb S}^2$ describing the orientation of the rod-like molecules. We consider a
regime in which inertia effects dominate viscosity. The
propagation of the orientation waves in the director field is
modelled by the least action principle (\cite{AH,[37]})
\beq
\delta \int_{\R^+} \int_{\R^3}\Big\{ \frac12 \p_t{\mathbf
n}\cdot\p_t{\mathbf n} - W({\mathbf n}, \nabla{\mathbf
n})\Big\}\,d{\mathbf x}\,dt  = 0, \qquad {\mathbf n}\cdot
{\mathbf n} = 1. \label{1.2}
\eeq
The potential energy density $W$ is given by the well-known Oseen-Frank
energy from the continuum theory of nematic liquid
crystals (\cite{GP}, Ch. 3.),
\beq
W\left({\mathbf n},\nabla{\mathbf n}\right) =
\frac12\alpha(\nabla\cdot{\mathbf n})^2 +\frac12\beta\left({\mathbf n}\cdot(\nabla\times{\mathbf
n})\right)^2
+\frac12\gamma\left|{\mathbf n}\times(\nabla\times{\mathbf
n})\right|^2, \label{1.1a}
\eeq
where the positive constants $\alpha$, $\beta$, and $\gamma$ are elastic constants
of the liquid crystal, corresponding to splay, twist, and bend, respectively. A special case is the one-constant model in which $\alpha=\beta=\gamma$, the function $W$ then reduces to the harmonic map energy density $W=\frac{1}{2}\alpha|\nabla \mathbf{n}|^2.$
The associate variational principle \eqref{1.2} leads to the equation for harmonic wave maps from ($1+3$)-dimensional Minkowski space into two sphere, see \cite{CTZ,S,STZ} for example.

The Euler-Lagrange equation associated with \eqref{1.2} and \eqref{1.1a} is
\begin{equation}\label{EL}
{\mathbf n}_{tt}=\alpha \nabla(\nabla\cdot \mathbf n)-\beta [A \nabla\times \mathbf n+\nabla\times(A \mathbf n)]+\gamma [B\times (\nabla\times \mathbf n)-\nabla\times (B\times \mathbf n)]+\lambda \mathbf n,
\end{equation}
with $A=\mathbf{n}\cdot (\nabla\times \mathbf n),\  B=\mathbf n\times (\nabla\times \mathbf n).$
The Lagrange multiplier $\lambda(x, t)$ in \eqref{EL} is chosen so that $\mathbf n\cdot\mathbf n = 1$, and is given explicitly in terms of $\mathbf n$ by
\begin{equation}\label{Lm}
\lambda=-|{\mathbf n}_{t}|^2+\alpha[|\nabla\mathbf n|^2-|\nabla\times \mathbf n|^2]+2[\beta A^2 +\gamma|B|^2]+(\alpha-\gamma)(\nabla\cdot B).
\end{equation}

When the space dimension is one (1-d), i.e. $x\in \R$, $W$ in \eqref{1.1a} is given specifically by
\begin{equation*}
W\left({\mathbf n},\partial_x{\mathbf n}\right) =
\frac\alpha 2(\partial_x n_1)^2 +\frac\beta 2\left((\partial_x n_2)^2+(\partial_x n_3)^2\right)
+\frac12(\gamma-\beta)n_1^2 |\mathbf{n}_x|^2,
\end{equation*}
which together with \eqref{EL} and \eqref{Lm} implies that
\begin{equation}\label{full}
\begin{cases}
\partial_{tt}n_1-\partial_{x}[c_1^2(n_1)\partial_x n_1]=\left[-|\mathbf{n}_t|^2+(2c_2^2-\gamma)|\mathbf{n}_x|^2+2(\alpha-\beta)
(\partial_x n_1)^2\right]n_1,\\
\partial_{tt}n_2-\partial_{x}[c_2^2(n_1)\partial_x n_2]=\left[-|\mathbf{n}_t|^2+(2c_2^2-\beta)|\mathbf{n}_x|^2+(\beta-\alpha)
n_1\partial_{xx}n_1\right]n_2,\\
\partial_{tt}n_3-\partial_{x}[c_2^2(n_1)\partial_x n_3]=\left[-|\mathbf{n}_t|^2+(2c_2^2-\beta)|\mathbf{n}_x|^2+(\beta-\alpha)
n_1\partial_{xx}n_1\right]n_3,
\end{cases}
\end{equation}
with $c_1^2(n_1)=\alpha+(\gamma-\alpha)n_1^2$ and $
c_2^2(n_1)=\beta+(\gamma-\beta)n_1^2.$
In particular, putting $\alpha=\beta$ in \eqref{full}, we obtain our system \eqref{vwl}.

The existence and uniqueness of energy conservation $H^1$ solution for \eqref{vwl}--\eqref{ID} has already been established in \cite{CZZ} by Chen-Zhang-Zheng following by an earlier work \cite{ZZ10}, and in \cite{CCD} by Cai-Chen-Du, respectively. In general, the solution of  \eqref{vwl}--\eqref{ID} or other type of variational wave system such as \eqref{VW} is not unique, due to the formation of cusp singularity \cite{BH,BZ,GHZ,ZZ03}. To obtain a unique solution after the formation of singularity, one needs to assume an additional admissible condition, such as the energy conservative condition in the week form. Also see results for dissipative solutions in \cite{BH, ZZ03}, and for \eqref{full} in \cite{ZZ11}.

Since the solution of \eqref{vwl}--\eqref{ID} generally forms finite time cusp singularity, the solution flow is not Lipschitz continuous under the $H^1$ metric, \cite{GHZ}.
The goal of this paper is to establish a Finsler type optimal transport metric, and show the Lipschitz continuous dependence of conservative solution on the initial perturbation under this metric.

In what follows, we always assume that the following generic condition is satisfied
\begin{equation}\label{gencon}
\alpha\neq \gamma.
\end{equation}
When $\alpha=\gamma$, the wave speed $c$ is a constant, so the system becomes a one dimensional semi-linear wave equation. Then the well-posedness of solution can be solved easily using the classical method. To avoid unnecessary complexity on notations and estimates, we do not address this case in this paper.

There is a highly simplified case when $
{\mathbf n} =  (\cos u(t,x), \sin u(t,x),0)
$ (planar deformation) with $x\in \mathbb R$, where the dependent variable $u\in\mathbb{R}$ measures the angle of the director field to the $x$-direction.
In this case, the function $u$ satisfies the scalar variational wave equation
\beq\label{VW}
u_{tt} -c(u)(c(u)\,u_x)_x = 0,\eeq
with
$
c^2(u) = \gamma\cos^2u + \alpha\sin^2u.
$
See \cite{AH, BZ, CZZ} for more details on the derivations of \eqref{vwl} and \eqref{VW}. The research on global well-posedness of H\"older continuous conservative solutions for variational wave type equations was initiated from \eqref{VW}, where current results include global existence \cite{BZ,HR}, uniqueness \cite{BCZ}, Lipschitz continuous dependence under Finsler type transport metric \cite{BC2015}, and generic regularity \cite{BC}.

Especially, the construction of new Lipschitz optimal transport metric for \eqref{vwl} is based on the metric established  for \eqref{VW} in \cite{BC2015} by Bressan and the second author.
In this paper, we leap from a scalar equation to a system of wave equations. The new metric for system \eqref{vwl} is quite different from the one for scalar equation, mainly because we need to control the energy transfer between different components of ${\bf n}$ in each characteristic family. We will introduce more details in section 2.

Finally, the recent result on Poiseuille flow of nematic liquid crystals via the full Ericksen-Leslie model in \cite{CHL20} shows that the results on global well-posedness for variational wave systems \eqref{VW} and \eqref{vwl} have direct applications on the Ericksen-Leslie model described by a coupled system consisting of a wave system on the director field of unit vector ${\bf n}$ and Navier-Stokes equations on  the fluid velocity ${\bf u}$. For results on elliptic and parabolic type Ericksen-Leslie systems, which are proved by very different techniques, we refer the reader to the pioneer paper \cite{lin89}, a survey paper \cite{linwangs14} and the references therein.

\subsection{Existing existence and uniqueness results}

In \cite{CCD, CZZ}, the authors established the existence and uniqueness of global conservative solution to the Cauchy problem \eqref{vwl}--\eqref{ID}. We first review the global existence theorem in \cite{CZZ}, where one can also find the definition of weak solution inside this theorem.

\begin{Theorem} [Existence \cite{CZZ}] \label{CZZthm}
The Cauchy problem \eqref{vwl}--\eqref{ID} has a global weak solution ${\mathbf n}(t, x)=(n_1, n_2, n_3)(t,x)$ defined for all $(t, x)\in [0, \infty)\times {\mathbb R}$ in the following sense:
\begin{itemize}
\item[{\rm (i)}] In the $t$-$x$ plane, the functions $(n_1, n_2, n_3)$ are locally H\"older continuous
with exponent $1/2$.   This solution $t\mapsto (n_1, n_2, n_3)(t,\cdot)$ is
continuously differentiable as a map with values in $L^p_{\rm
loc}$, for all $1\leq p<2$. Moreover, it is Lipschitz continuous with respect to
$(w.r.t.)$~the $L^2$ distance, that is, there exists a constant $L$ such that
\begin{equation*}
 \big\|{n_i}(t,\cdot)-{n_i}(s,\cdot)\big\|_{L^2}
 \leq L\,|t-s|, \quad i=1, 2, 3,\quad \hbox{for all } t,s\in\mathbb R^+.
\end{equation*}

\item[{\rm (ii)}] The functions $(n_1, n_2, n_3)(t,x)$ take on the initial conditions in \eqref{ID}
pointwise, while their temporal derivatives hold in  $L^p_{\rm
loc}\,$ for $p\in [1,2)\,$.
\item[{\rm (iii)}] The equation \eqref{vwl} holds in distributional sense for all test function $\varphi\in
C^1_c(\mathbb R^+\times \mathbb R)$.
\end{itemize}
\end{Theorem}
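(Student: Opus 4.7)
The plan is to follow the Riemann-invariant plus energy-coordinate approach pioneered for the scalar equation \eqref{VW}, adapting it to the three-component vector system. First I would introduce the characteristic quantities
\[
R_i := \partial_t n_i + c(n_1)\,\partial_x n_i, \qquad S_i := \partial_t n_i - c(n_1)\,\partial_x n_i, \qquad i=1,2,3,
\]
so that $\partial_t n_i=(R_i+S_i)/2$ and $c(n_1)\partial_x n_i=(R_i-S_i)/2$. A direct computation from \eqref{vwl}, together with the constraint $|\mathbf n|=1$, yields that along the backward characteristic $dx/dt=-c(n_1)$ each $R_i$ satisfies an ODE whose right-hand side is a polynomial in $\mathbf n,R_j,S_j$ of degree at most two in $(R_j,S_j)$, and symmetrically for $S_i$ along $dx/dt=+c(n_1)$. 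Summing appropriate combinations produces balance laws for the forward and backward energy densities
\[
E^+ := \sum_{i=1}^3 R_i^2, \qquad E^- := \sum_{i=1}^3 S_i^2,
\]
which exchange mass between the two families while keeping $\int(E^++E^-)\,dx$ bounded uniformly in time.

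This Riemann-type system is not directly solvable in Sobolev spaces, because $E^\pm$ can concentrate into cusp singularities in finite time. The key second step is therefore a change of independent variables $(t,x)\mapsto(X,Y)$ defined by
\[
X := \int_{-\infty}^{\bar x^-(0;t,x)}\bigl(1+E^-\bigr)\,d\xi, \qquad Y := \int_{\bar x^+(0;t,x)}^{+\infty}\bigl(1+E^+\bigr)\,d\xi,
\]
where $\bar x^\pm(0;t,x)$ is the foot at time $0$ of the $\pm$-characteristic through $(t,x)$. In these energy coordinates the characteristic speeds are bounded, and the rescaled unknowns
\[
\mathbf n, \qquad \tilde R_i := \frac{R_i}{1+E^+}, \qquad \tilde S_i := \frac{S_i}{1+E^-}, \qquad \ell := \frac{1}{1+E^+}, \qquad m := \frac{1}{1+E^-}
\]
close up into a semilinear system with globally Lipschitz right-hand sides. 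A standard Picard iteration on compact rectangles in the $(X,Y)$-plane then delivers a unique global smooth solution of the transformed system.

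Third, I would transport the solution back to the physical plane. Since the maps $x\mapsto X(\cdot,t)$ and $x\mapsto Y(\cdot,t)$ are monotone and Lipschitz with inverses controlled by $\ell,m$, the pull-back defines $\mathbf n(t,x)$ on $[0,\infty)\times\mathbb R$. Local H\"older continuity with exponent $1/2$ then follows from the $L^\infty_tL^1_x$ bound on $E^++E^-$ together with a Cauchy--Schwarz estimate along characteristic rectangles; the Lipschitz bound in $L^2_x$ with respect to $t$ follows from the uniform $L^2$ control of $\partial_t n_i=(R_i+S_i)/2$; and the distributional form of \eqref{vwl} is recovered by unfolding the identities satisfied by $R_i,S_i$ in the energy coordinates and changing variables back to $(t,x)$.

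The hard part is the coupling between the three components. Unlike the scalar case \eqref{VW}, the source terms for $R_i$ and $S_i$ involve all of $n_j,R_j,S_j$ and carry the different constants $\zeta_1=\gamma$, $\zeta_2=\zeta_3=\alpha$, so energy can transfer not only between the forward and backward families but also among components within a family. Showing that the resulting balance laws still keep $E^\pm$ nonnegative with finite total mass, and that the Jacobian of $(t,x)\mapsto(X,Y)$ stays positive away from a negligible set, requires a careful algebraic manipulation exploiting the constraint $|\mathbf n|=1$ (which, differentiated, forces $\sum_i n_iR_i$ and $\sum_i n_iS_i$ to satisfy simpler linear transport equations) and the specific quadratic form \eqref{c-def} of $c^2(n_1)$.
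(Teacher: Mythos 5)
Your proposal follows essentially the same route as the construction this theorem rests on (it is quoted from \cite{CZZ}, and the same machinery is recapitulated in Section 4.1 of the paper): the decomposition $\mathbf R=\mathbf n_t+c\mathbf n_x$, $\mathbf S=\mathbf n_t-c\mathbf n_x$, energy-weighted characteristic coordinates $(X,Y)$, bounded rescaled variables corresponding to $\mathbf L,\mathbf m,h_1,h_2,p,q$, global solvability of the resulting semilinear system, and the inverse map back to $(t,x)$ yielding the H\"older continuity, $L^2$-Lipschitz dependence on $t$, and the distributional form of the equation. Only minor normalizations differ: the paper pairs $X$ with the density $1+\mathbf R^2$ and $Y$ with $1+\mathbf S^2$ (you swapped the weights, which is harmless up to relabeling of characteristics), and the transformed right-hand sides are quadratic in $p,q$ rather than globally Lipschitz, so global existence in the $(X,Y)$ plane uses local solvability plus a priori bounds rather than a plain Picard iteration.
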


The uniqueness result for conservative solution in \cite{CCD} can be summarized as follows.

\begin{Theorem}[Uniqueness \cite{CCD} and energy conservation \cite{CZZ}]\label{ECthm}
 Under the previous assumptions, a unique solution ${\mathbf n}={\mathbf n}(t,x)$ exists which is {\bf conservative} in the following sense:

 There exist two  families of positive Radon measures
 on the real line: $\{\mu_-^t\}$ and $\{\mu_+^t\}$, depending continuously on $t$ in the weak topology of measures, with the following properties.
\begi
\item[(i)] At every time $t$ one has
\begin{equation*}
\mu_-^t(\mathbb {R})+\mu_+^t(\mathbb {R})~=~E_0~:=~2
\int_{-\infty}^\infty \Big[|{\bf n}_1|^2(x) + c^2(n_{10}(x)) |{\bf n}_{0,x}(x)|^2\Big]\, dx \,,\end{equation*}
where we denote the initial data
$$
{\bf n}_0=(n_{10},n_{20}, n_{30})= {\bf n}|_{t=0},\qquad
{\bf n}_1=(n_{11},n_{21}, n_{31})= {\bf n}_t|_{t=0}.
$$

\item[(ii)] For each $t$, the absolutely continuous parts of $\mu_-^t$ and
$\mu_+^t$ w.r.t.~the Lebesgue measure
have densities  respectively given  by
$
\bigl|{\bf n}_t + c(n_1) {\bf n}_x\bigr|^2 {\rm ~and~}
\bigl|{\bf n}_t - c(n_1) {\bf n}_x\bigr|^2.
$
\item[(iii)]  For almost every $t\in\mathbb {R}^+$, the singular parts of $\mu^t_-$ and $\mu^t_+$
are concentrated on the set where $n_1=0$ or $\pm 1$, when $\alpha\neq\gamma$.
\endi
\end{Theorem}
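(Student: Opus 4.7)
The plan is to reduce the quasilinear system \eqref{vwl} to a semilinear one via characteristic and energy variables, along the lines of the Bressan--Zheng scheme for the scalar equation \eqref{VW} extended to the vectorial setting in \cite{CZZ,CCD}. First I would introduce the Riemann-type variables $R_i = \partial_t n_i + c(n_1)\partial_x n_i$ and $S_i = \partial_t n_i - c(n_1)\partial_x n_i$ for $i=1,2,3$, and derive their evolution along the forward and backward characteristics $dx/dt = \pm c(n_1)$. Writing $R=(R_1,R_2,R_3)$ and $S=(S_1,S_2,S_3)$, the squared moduli $|R|^2$ and $|S|^2$ satisfy transport equations with zero-order source terms whose contributions cancel upon summation, giving formal conservation of $\int(|R|^2 + |S|^2)\,dx = 2E_0$. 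This already explains the splitting into $\mu_\pm^t$ in (i) and identifies the densities of the absolutely continuous parts claimed in (ii).

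The obstacle is that $|R|$ and $|S|$ can blow up at cusps, so to obtain a well-posed flow I would pass to \emph{energy variables} $w_i = 2\arctan R_i$ and $z_i = 2\arctan S_i$, and to new independent coordinates $(\xi,\eta)$ chosen so that $\partial\xi/\partial x = 1+|R|^2$ along backward characteristics, and symmetrically for $\eta$ along forward ones. In these Lagrangian-type coordinates the transformed system for $(w_i, z_i, n_i, t, x)$ becomes semilinear with globally Lipschitz right-hand sides on bounded sets, so a standard contraction argument gives a unique global smooth solution. Pulling back to $(t,x)$ produces the conservative weak solution of \eqref{vwl}. Uniqueness in the conservative class then follows, as in \cite{CCD}, by showing that any admissible weak solution must generate the same $(\xi,\eta)$-trajectories upon transformation.

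Next I would define the measures $\mu_\pm^t$ as pushforwards under the coordinate maps: $\mu_+^t$ is the pushforward of $\tfrac{1}{4}\sum_i \sin^2(w_i/2)\,d\xi$ onto the $x$-axis at time $t$, and analogously $\mu_-^t$ arises from $\tfrac{1}{4}\sum_i \sin^2(z_i/2)\,d\eta$. Wherever the Jacobian of the coordinate change is nonzero, the resulting density simplifies to $|\mathbf n_t + c(n_1)\mathbf n_x|^2$ and $|\mathbf n_t - c(n_1)\mathbf n_x|^2$ respectively, which proves (ii). Weak continuity in $t$ of $\mu_\pm^t$ is inherited from continuity of the smooth flow in $(\xi,\eta)$, and (i) follows because the total $(\xi,\eta)$-mass is a conserved quantity of the semilinear system, equal to $E_0$ at $t=0$.

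The hard part, as I see it, is (iii), the precise localization of the singular parts of $\mu_\pm^t$. The strategy I would use is to exploit that the coupling coefficients controlling possible concentration involve $c'(n_1) \propto n_1$ together with the factors $c^2(n_1) - \zeta_i$, and that $c^2(0) = \alpha = \zeta_2 = \zeta_3$ while $c^2(\pm 1) = \gamma = \zeta_1$ by \eqref{c-def}. Hence all these coefficients vanish exactly on $\{n_1=0\}\cup\{n_1=\pm 1\}$. Outside this set the ODEs for $(w_i, z_i)$ in Lagrangian coordinates are non-degenerate, so energy cannot concentrate there; a Fubini/coarea argument applied to the set where the Jacobian of the coordinate change degenerates, combined with the generic condition \eqref{gencon} $\alpha\neq\gamma$, then confines the singular part to $\{n_1\in\{0,\pm 1\}\}$ for a.e.~$t\in\mathbb{R}^+$, exactly as in \cite{CZZ}.
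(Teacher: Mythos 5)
This theorem is not proved in the paper: it is quoted from \cite{CCD} (uniqueness) and \cite{CZZ} (existence and energy conservation), and the paper only recalls the underlying construction in Section 4.1, namely the characteristic coordinates $(X,Y)$ and the variables $\mathbf L=\mathbf R/(1+|\mathbf R|^2)$, $\mathbf m=\mathbf S/(1+|\mathbf S|^2)$, $h_1=1/(1+|\mathbf R|^2)$, $h_2=1/(1+|\mathbf S|^2)$, $p=(1+|\mathbf R|^2)/X_x$, $q=(1+|\mathbf S|^2)/(-Y_x)$ solving the semilinear system \eqref{2.17}. Your overall strategy (Riemann variables $\mathbf R,\mathbf S$, energy/characteristic coordinates, semilinear reformulation, pushforward measures $\mu_\pm^t$) is indeed the one used there, but the specific change of variables you propose breaks down for the system. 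With componentwise variables $w_i=2\arctan R_i$ the bounded factors you gain are $\cos^2(w_i/2)=1/(1+R_i^2)$, whereas the source terms in \eqref{R-S-eqn} and \eqref{balance} involve the full-norm quantities $\mathbf R^2$, $\mathbf R\cdot\mathbf S$ and the cross terms $R_iR_1$, $R_iS_1$. After your substitution these become expressions of the type $\bigl(\sum_j\tan^2(w_j/2)\bigr)\cos^2(w_i/2)$, which are unbounded when some $R_j$, $j\neq i$, blows up while $R_i$ stays bounded; so the transformed system is \emph{not} semilinear with Lipschitz right-hand sides on bounded sets, and the contraction argument giving global existence and uniqueness collapses. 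This is exactly why \cite{CZZ,CCD} normalize by the full norm, as in \eqref{2.17}. The same defect infects your construction of the measures in (ii): the pushforward of $\tfrac14\sum_i\sin^2(w_i/2)\,d\xi$ with $d\xi=(1+|\mathbf R|^2)dx$ has density $\sum_i R_i^2(1+|\mathbf R|^2)/(1+R_i^2)$, which is not $|\mathbf R|^2$; the correct object is $(1-h_1)\,p\,dX=|\mathbf R|^2dx$.

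Two smaller slips: with the normalization of (i) one has $\int(|\mathbf R|^2+|\mathbf S|^2)dx=E_0$, not $2E_0$; and your labeling of $\mu_\pm^t$ is swapped relative to the statement, since $\mu_-^t$ is the measure whose absolutely continuous density is $|\mathbf n_t+c(n_1)\mathbf n_x|^2=|\mathbf R|^2$, i.e.\ the one built from the $\mathbf R$/backward data. Finally, for (iii) your heuristic is in the right spirit but too coarse: the coefficients do not all vanish on the same set ($c^2-\gamma=(\gamma-\alpha)(n_1^2-1)$ vanishes at $n_1=\pm1$, while $c'$ and $c^2-\alpha$ vanish at $n_1=0$, by \eqref{c-def}), so the localization of the singular part must be run through the equations for $h_1,h_2$ (and $\mathbf L,\mathbf m$) in \eqref{2.17} on the degeneracy set $\{h_1=0\}$ or $\{h_2=0\}$, as is done in \cite{CZZ}, rather than by a coarea/Fubini argument alone.
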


\subsection{Our main result}
Then we come to state our main Lipschitz continuous dependence theorem.
\begin{Theorem}\label{thm_metric}
The energy conservative weak solution to the nonlinear wave system of nematic liquid crystals \eqref{vwl}--\eqref{ID} depends Lipschitz continuously on the initial data, under a Finsler type optimal transport metric, defined in Definition \ref{def_weak}.
 Namely, let $(\mathbf{n}_0,\mathbf{n}_1)$ and $(\hat{\mathbf{n}}_0,\hat{\mathbf{n}}_1)$ be two initial data in \eqref{ID}, then for any time $t\in[0,T]$, there exists a distance functional $d$, such that, the corresponding solutions satisfy
\[d\big((\mathbf{n},\mathbf{n}_t)(t),(\hat{\mathbf{n}}, \hat{\mathbf{n}}_t)(t)\big)\leq C\cdot d\big((\mathbf{n}_0,\mathbf{n}_1),(\hat{\mathbf{n}}_0, \hat{\mathbf{n}}_1)\big),\]
where the constant $C>0$ depends only on $T$ and initial total energy.
\end{Theorem}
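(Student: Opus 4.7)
The plan is to adapt the Bressan--Chen framework from \cite{BC2015}, developed for the scalar equation \eqref{VW}, to the vector system \eqref{vwl}. The key new feature will be a component-aware bookkeeping of energy within each characteristic family, so that transfers among $n_1,n_2,n_3$ produced by the source terms do not destroy contractivity of the Finsler metric.

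The first step is to pass to Riemann-type invariants $R_i=(n_i)_t+c(n_1)(n_i)_x$ and $S_i=(n_i)_t-c(n_1)(n_i)_x$, desingularise them through bounded changes of variable (for instance $w_i=2\arctan R_i$, $z_i=2\arctan S_i$) to remove the blow-up at cusp points, and then change to characteristic coordinates $(X,Y)$ along the curves $\dot x=\pm c(n_1)$. By Theorem \ref{ECthm}(ii), in these coordinates the forward and backward energy densities $|\mathbf{n}_t\pm c(n_1)\mathbf{n}_x|^2$ are constant in $Y$ and $X$ respectively, so the rewritten system is semilinear and the relevant quantities become globally smooth functions of $(X,Y)$. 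A one-parameter smooth interpolation between two initial data then evolves continuously in these variables and, pushed back to the physical plane, yields a smooth path of conservative weak solutions connecting the two prescribed endpoints.

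On this smooth level I would define the Finsler metric by specifying an infinitesimal cost of the form
\[\|(\delta\mathbf{n},\delta\mathbf{n}_t)\|_\ast:=\int\bigl(|\delta X|+|\delta Y|\bigr)\rho\,dX\,dY+\sum_{i=1}^{3}\int\bigl(|\delta n_i|+|\delta w_i|+|\delta z_i|\bigr)W_i\,dX\,dY,\]
where $\delta X,\delta Y$ are horizontal shifts of the characteristic curves, $\delta w_i,\delta z_i$ are the pointwise shifts of the rescaled Riemann invariants, and $\rho,W_i$ are weight functions to be determined. The distance $d(\cdot,\cdot)$ appearing in Theorem \ref{thm_metric} is then the infimum of the path-length integral of $\|\cdot\|_\ast$ over all such smooth interpolations, in accordance with Definition \ref{def_weak}.

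The heart of the argument is to choose the weights $\rho,W_1,W_2,W_3$ so that a Gr\"onwall-type inequality $\tfrac{d}{dt}\|(\delta\mathbf{n},\delta\mathbf{n}_t)\|_\ast\leq C(E_0)\|(\delta\mathbf{n},\delta\mathbf{n}_t)\|_\ast$ holds along the linearised flow; integrating from $0$ to $T$ then produces the Lipschitz bound with constant $C=e^{C(E_0)T}$ on smooth perturbations, and a standard approximation argument based on Theorems \ref{CZZthm} and \ref{ECthm} extends it to arbitrary $H^1\times L^2$ initial data. The main obstacle will be constructing these weights. Unlike the scalar case of \cite{BC2015}, the source term of \eqref{vwl} contains $|\mathbf{n}_t|^2=\tfrac14\sum_j(R_j+S_j)^2$ together with the $\zeta_i$-dependent coefficient, which couples all three components and mixes the two characteristic families, so the $W_i$ cannot be chosen independently. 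I expect to need coupled ODEs along characteristics that redistribute the cross-component contributions, combined with an interaction potential measuring future crossings of forward and backward characteristics in the spirit of \cite{BC2015}, in order to close the estimate uniformly on $[0,T]$.
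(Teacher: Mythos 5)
Your overall architecture (Riemann invariants, desingularized variables, characteristic coordinates, a Finsler length on paths of solutions, a Gr\"onwall estimate on tangent vectors, then approximation) matches the paper's, but there are two genuine gaps. First, your key simplifying claim is false: the forward and backward energy densities $\mathbf{R}^2$, $\mathbf{S}^2$ are \emph{not} constant along their characteristics. They satisfy the balance laws \eqref{balance} with the cubic source $\frac{c'}{2c}(\mathbf{R}^2S_1-R_1\mathbf{S}^2)$, i.e.\ energy is exchanged between the two families; moreover, within a single family the quadratic terms $R_iR_1$, $S_iS_1$ in \eqref{R-S-eqn} transfer energy between the components $i=1,2,3$. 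Theorem \ref{ECthm}(ii) only identifies the absolutely continuous parts of $\mu^t_\pm$; it gives no conservation along characteristics. These two transfer mechanisms are exactly what makes the contraction estimate delicate, and your plan assumes them away. Consequently the ``heart of the argument'' in your proposal --- choosing the weights $\rho, W_i$ so that $\frac{d}{dt}\|\cdot\|_\ast\le C\|\cdot\|_\ast$ --- is left entirely open, and it is not a routine completion: the paper's norm \eqref{norm1} needs, besides the interaction potentials $\mathcal{V}^\pm$ (which only absorb the cross-family terms like $S_iR_1$ and $\mathbf{R}^2S_1$), the relative-shift corrections \eqref{rseq} and the specific terms measuring the change of the base measures with densities $1$, $R_i$ and $\mathbf{R}^2$ ($I_3$, $I_4$, $I_5$), with the term $I_4$ encoding a cancellation (the analogue of $\dot I_2+\dot I_5$ in \cite{BC2015}) without which the intra-family transfer $R_iR_1$, $i\ne1$, cannot be controlled at all. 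Your proposed cost, a sum of $|\delta n_i|+|\delta w_i|+|\delta z_i|$ against undetermined weights, has no counterpart of these base-measure terms, so the Gr\"onwall inequality you postulate would not close.

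Second, the passage from smooth to general $H^1\times L^2$ data is not the ``standard approximation'' you invoke. A smooth interpolation of data gives smooth solutions of the semilinear system \eqref{2.17} in $(X,Y)$, but the tangent-vector norm lives on the physical time slice, i.e.\ as a line integral over $\Lambda_\tau=\{t(X,Y)=\tau\}$ (not a two-dimensional $dX\,dY$ integral as you wrote --- as stated your functional does not define a distance between profiles at a fixed time $t$), and when $h_1$ or $h_2$ vanishes the change of variables degenerates. One must show the integrands remain smooth and the estimate survives across singularities, which in the paper requires restricting to solutions with \emph{generic} singularities, the density of piecewise regular paths (Corollary \ref{thm_repath}, relying on the generic regularity theorem of \cite{CCD}), the relabeling infimum in the path length, and only then the limit construction of Definition \ref{def_weak}. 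Without this step your argument controls only paths of globally smooth solutions, which generically do not exist beyond the blow-up time.
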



This paper is divided into five sections. In section 2, we introduce the main ideas used to construct the metric, and the difference between our metric and the metric for scalar equation. In section 3, we establish the Lipschitz metric for smooth solutions.
In section 4, we extend the metric to piecewise smooth generic solutions, using the generic regularity result in \cite{CCD}. Finally, we extend the metric to $H^1$ solution and prove the main theorem in section 5, where we also compare our Finsler metric with some Sobolev metrics and Kantorovich-Rubinstein metric.

\section{Basic setup and main ideas used to establish the metric}\label{sec_smooth}
To describe how to construct the Lipschitz metric, we first consider smooth solutions to \eqref{vwl}--\eqref{ID}. Due to energy concentration when singularity forms, the solution flow fails to be Lipschitz under the $H^1$ distance, where this distance is a natural choice corresponding to the energy. Instead, we will establish a Finsler type optimal transport geodesic distance between any two solutions. Basically, the optimization is taken on the cost of energy transportation between two solutions.

To keep track of the cost of energy transportation, we are led to construct the geodesic distance. That is, for two given solution profiles $\mathbf{n}(t)$ and $\mathbf{n}^\epsilon(t)$, we consider all possible smooth deformations/paths $\gamma^t: \theta \mapsto \mathbf{n}^\theta(t)$ for $\theta\in [0,1]$ with $\gamma^t(0) = \mathbf{n}(t)$ and $\gamma^t(1) = \mathbf{n}^\epsilon(t)$, and then measure the length of these paths through integrating the norm of the tangent vector $d\gamma^t/d\theta$. The distance between $\mathbf{n}$ and $\mathbf{n}^\epsilon$ will be calculated by the optimal path length
\begin{equation*}
d\LC \mathbf{n}(t), \mathbf{n}^\epsilon(t) \RC = \inf_{\gamma^t}\|\gamma^t\| := \inf_{\gamma^t}\int^1_0 \| \mathbf{v}^\theta(t) \|_{\mathbf{n}^\theta(t)} \ d\theta, \quad \text{where } \mathbf{v}^\theta(t) = {d\gamma^t\over d\theta}.
\end{equation*}
Here the subscript $\mathbf{n}^\theta(t)$ emphasizes the dependence of the norm on the flow $\mathbf{n}^\theta$. In fact, there might be no smooth enough path between two solutions. We will use the generic regularity result in \cite{CCD} to overcome this problem.


%

The metric will be established in three steps:

\begin{itemize}

\item[1.]
For smooth solutions,
we find a norm $\| \mathbf{v}^\theta(t) \|_{\mathbf{n}^\theta(t)}$ measuring the cost in shifting from one conservative solution $(\mathbf{n},\mathbf{n}_t)$  (with energy density $\mu$) to the other one $(\hat{\mathbf{n}},{\hat{\mathbf{n}}}_t)$, for any  time $t\leq T$, such that
\beq\label{2.41}
\frac{d}{dt}\| \mathbf{v}^\theta(t) \|_{\mathbf{n}^\theta(t)}\leq C_T\cdot \|\mathbf{v}^\theta(t) \|_{\mathbf{n}^\theta(t)}.
\eeq
Hence,
\begin{equation*}\label{d2}\textstyle d\big( \mathbf{n}(t),\hat{\mathbf{n}}(t))\big)
~\leq~C_T\cdot
d\big( \mathbf{n}(0), \hat{\mathbf{n}}(0))\big),\end{equation*}
where $C_T$ is a constant only depending on the arbitrarily given $T$ and initial energy, but is uniformly bounded when solution approaches a singularity.
\item[2.] Extend the Lipschitz metric in step 1 to piecewise smooth generic solutions.
\item[3.] Apply the generic regularity result in \cite{CCD} to prove the desired Lipschitz continuous property for any (piecewise smooth) generic solution, then take a limit to all conservative solutions, using the result in \cite{CCD} that generic solutions are dense in the energy space $(\mathbf{n},\mathbf{n}_t)(t)\in H^1\times L^2$.
\end{itemize}




Finally, we introduce how to define $\| \mathbf{v}^\theta(t) \|_{\mathbf{n}^\theta(t)}$ in Step 1 such that the inequality \eqref{2.41} is uniformly satisfied before blowup.  
To embed the wave structure in the metric, we consider  a ``double transportation problem'',  i.e.  study the wave propagation for forward and backward characteristics, respectively, then find two corresponding cost functions.

We introduce
\begin{equation} \left\{
\begin{array}{l}
 \mathbf{R}=(R_1,R_2,R_3) ~:=~{\mathbf n}_t+c{\mathbf n}_x,\\ [4mm]
 \mathbf{S}=(S_1,S_2,S_3)~:=~{\mathbf n}_t-c{\mathbf n}_x,
 \end{array}\right.    \label{R-S}
 \end{equation}
for backward and forward characteristic directions, respectively. And the corresponding energy densities are ${\bf R}^2$ and ${\bf S}^2$, where we use the following notations
\[
 \mathbf{R}^2= \mathbf{R}\cdot \mathbf{R},\qquad  \mathbf{S}^2= \mathbf{S}\cdot \mathbf{S}.
\]
Then, for smooth solutions, equations (\ref{vwl}) is equivalent to the following system for $(\mathbf{R}, \mathbf{S}, \mathbf{n})$
\begin{equation} \left\{
\begin{array}{ll}
\partial_t R_i-c\partial_xR_i=
\dfrac1{4c^2}\bigl\{(c^2-\zeta_i)(\mathbf{R}^2+\mathbf{S}^2)-2(3c^2-\zeta_i)
\mathbf{R}\cdot\mathbf{S}\bigr\}n_i+\dfrac{c'(n_1)}{2c(n_1)}(R_i-S_i)R_1, \\
\partial_t S_i+c\partial_xS_i=\dfrac1{4c^2}\bigl\{(c^2-\zeta_i)
(\mathbf{R}^2+\mathbf{S}^2)-2(3c^2-\zeta_i)\mathbf{R}\cdot\mathbf{S}\bigr\}n_i-\dfrac{c'(n_1)}{2c(n_1)}(R_i-S_i)S_1, \\ [4mm]
{\mathbf n}_x=\dfrac{\mathbf{R}-\mathbf{S}}{2c(n_1)}\quad \mbox{or} \quad {\mathbf n}_t=\dfrac{\mathbf{R}+\mathbf{S}}2,
 \end{array}\right. \label{R-S-eqn}
 \end{equation}
 for $i=1,2,3$, with
 $
\zeta_1=\ga$ and $\zeta_2=\zeta_3=\al.$
System (\ref{R-S-eqn}) has the following form of energy conservation law:
\begin{equation*}\label{energy1}
\frac{1}{4}\partial_t\bigl(\mathbf{R}^2+\mathbf{S}^2\bigr)-\frac{1}{4}\partial_x\bigl(c(n_1)(\mathbf{R}^2-\mathbf{S}^2)\bigr)=0,
\end{equation*}
and two balance laws for energy densities in two directions, respectively,
\beq\label{balance}
\left\{
\begin{array}{rcl}(
\mathbf{R}^2)_t - (c\mathbf{R}^2)_x & = & {c'(n_1)\over 2c(n_1)}( \mathbf{R}^2S_1 - R_1 \mathbf{S}^2)\, , \\ [3mm]
(\mathbf{S}^2)_t + (c\mathbf{S}^2)_x & = & - {c'(n_1)\over 2c(n_1)}( \mathbf{R}^2S_1 -R_1 \mathbf{S}^2)\,.
\end{array}
\right.
\eeq

Now we introduce difficulties we meet and new ideas we use when we establish the metric.
\paragraph{\bf 1.}
The double transportation problem gives us tools, i.e. equations  \eqref{R-S-eqn} and \eqref{balance} on  $\bf R$ and $\bf S$, to study wave propagation in each characteristic family and wave interactions.  This is crucial for us to find cost functions and prove the Lipschitz continuous property.

However, forward and backward energy might increase in the wave interaction (see the cubic nonlinearity in \eqref{balance}), although the total energy is bounded.  This happens because energy transfers between different characteristic families during wave interactions.

We introduce some interaction potentials, which share similar philosophy as the Glimm potential for hyperbolic conservation laws.  Very roughly speaking, the interaction potential memories the possible future increase of energy on a single forward or backward wave.  As a wave interaction happens,  the interaction potential (future possible increase of energy) decays,  since the current interaction is out of the list of future interactions. This decay will balance the possible increase of forward or backward energy.
\paragraph{\bf 2.}
The second difficulty comes from the energy transfer inside one characteristic direction between different components. The quadratic terms $R_i R_1$ and $S_i S_1$ in the equations of $R_i$ and $S_i$ in \eqref{R-S-eqn} shows such kind of phenomena.  This is a fundamental difficulty when one jumps from a scalar wave equation to a wave system.

The wave potentials mentioned in the last part can only balance higher order crossing terms such as $S_i R_1$ in \eqref{R-S-eqn} or ${\bf R}^2 S_1 $ in \eqref{balance}. But it takes no effect on $R_i R_1$, $i\neq 1$, in \eqref{R-S-eqn}.

Briefly speaking, our strategy is to adjust components in the metric in a very subtle way.  This is the most difficulty part in this paper, and will make the metric for \eqref{vwl} quite different from the one for \eqref{VW} in \cite{BC2015}.

The most important discovery in this paper is that we find the cancellation between  time derivatives of two terms in the metric for scalar equation \eqref{VW} ($\dot{I}_2$ and $\dot{I}_5$ in \cite{BC2015}). This cancellation also holds for system \eqref{vwl}. Although for a scalar variational wave equations, one can bound these two time derivatives separately, it is not the case for the wave system \eqref{vwl} because of the energy transfer in the same characteristic family between different components. After using the new term in the metric, now denoted as $I_4$ in \eqref{norm1} (correspond to $I_2+I_5$ in  \cite{BC2015}), one can prove \eqref{2.41}. In fact, we find the new term $I_4$ exactly accounts for the change of base measure with density $R_i$. This is a more appropriate term to use in the metric than the old two terms used in \cite{BC2015}, although each of them also has its physical meaning.
%

Secondly, wave speed $c(n_1)$ only depends on $n_1$, and the equations of $n_i$ have different coefficients $\zeta_i$. As a consequence, our metric needs to be ``inhomogeneous'' in order to reflect the inhomogeneity mentioned above.
Let's only explain the idea for the backward wave on $\bf R$. The idea for the forward direction is the same.
To obtain precise estimates on the propagation of each $R_i$ and energy transfer between $R_i$ and $R_j$ with $i\neq j$, we need to adjust the relative shift term, such as change $R_i$ to $R_1$ in some relative shift term to reflect the dependence of $c(n_1)$ on $n_1$ but not on $n_2$ and $n_3$. In fact, after we shift a wave, we create some wave interactions manually, so the corresponding increase of energy needs to be counted in the metric, by adding some relative shift term. This is a very crucial and subtle part in the metric. More details will be introduced later when we construct the metric.

%
\section{The norm of tangent vectors for smooth solutions}
Now, let us consider a smooth solution $(\mathbf{n},\mathbf{R},\mathbf{S})(x)$ to \eqref{vwl}, \eqref{R-S-eqn}, and then take a family of perturbed solutions
$(\mathbf{n}^\epsilon,\mathbf{R}^\epsilon,\mathbf{S}^\epsilon)(x)$ of the form
\begin{equation}\label{perb}
n_i^\epsilon(x)=n_i(x)+\epsilon v_i(x)+o(\epsilon), \quad{\rm and }\quad
\begin{cases}
R_i^\epsilon(x)=R_i(x)+\epsilon r_i(x)+o(\epsilon),\\
S_i^\epsilon(x)=S_i(x)+\epsilon s_i(x)+o(\epsilon),
\end{cases}
    \end{equation}
for $i=1,2,3$ and $\mathbf{n}^\epsilon=(n_1^\epsilon,n_2^\epsilon,n_3^\epsilon)$,  $\mathbf{R}^\epsilon=(R_1^\epsilon,R_2^\epsilon,R_3^\epsilon)$, $\mathbf{S}^\epsilon=(S_1^\epsilon,S_2^\epsilon,S_3^\epsilon)$.

Let the tangent vectors $\mathbf{r}=(r_1,r_2,r_3), \mathbf{s}=(s_1,s_2,s_3)$ be given, from \eqref{R-S-eqn}  and \eqref{perb}, it follows that the perturbation $\mathbf{v}=(v_1,v_2,v_3)$ can be uniquely determined by
\begin{equation}\label{vx}
\mathbf{v}_x=\frac{\mathbf{r}-\mathbf{s}}{2c(n_1)}-\frac{\mathbf{R}-\mathbf{S}}{2c^2(n_1)}c'(n_1)v_1,\qquad \mathbf{v}(t,0)=\mathbf{0},
\end{equation}
and
\begin{equation}\label{vt}
\mathbf{v}_t=(\mathbf{r}+\mathbf{s})/2.
\end{equation}
Moreover, in light of \eqref{vwl} and \eqref{R-S-eqn},  it is  straightforward to check that the first order perturbations $\mathbf{v},\mathbf{s},\mathbf{r}$ must satisfy the equations
\begin{equation}\label{vtt}
\begin{split}
&\partial_{tt}v_i-c^2 \partial_{xx}v_{i}=2\big[(c')^2\partial_x n_1\partial_x n_i+cc''\partial_x n_1\partial_x n_i+cc'\partial_{xx} n_i\big]v_1-\big[|\mathbf{n}_t|^2-(2c^2-\zeta_i)|\mathbf{n}_x|^2\big]v_i\\
&\qquad\qquad-2\big[\mathbf{n}_t\cdot\mathbf{v}_t
-(2c^2-\zeta_i)\mathbf{n}_x\cdot\mathbf{v}_x\big]n_i+4cc'n_iv_1|\mathbf{n}_x|^2+2cc'(\partial_x n_1\partial_x v_i+\partial_x n_i\partial_x v_1),
\end{split}\end{equation}
and
\begin{equation}\label{rt}
\begin{cases}
\displaystyle\partial_t r_i-c\partial_x r_i=c'v_1\partial_x R_i+\frac{c'\zeta_i}{2c^3}v_1 n_i\big(\mathbf{R}^2+\mathbf{S}^2-2\mathbf{R}\cdot\mathbf{S}\big)+\frac{cc''-(c')^2}{2c^2}(R_i-S_i)R_1v_1\\
\qquad\qquad\qquad\displaystyle+\frac{v_i}{4c^2}\big[(c^2-\zeta_i)(\mathbf{R}^2+\mathbf{S}^2)-2(3c^2-\zeta_i)\mathbf{R\cdot\mathbf{S}}\big]+\displaystyle\frac{c'}{2c}\big[(R_i-S_i)r_1+(r_i-s_i)R_1\big]\\
\qquad\qquad\qquad\displaystyle+\frac{n_i}{2c^2}\big[(c^2-\zeta_i)(\mathbf{R}\cdot\mathbf{r}+\mathbf{S}\cdot\mathbf{s})
-(3c^2-\zeta_i)(\mathbf{R}\cdot\mathbf{s}+\mathbf{S}\cdot\mathbf{r})\big],\\
\displaystyle\partial_t s_i+c\partial_x s_i=-c'v_1\partial_x S_i+\frac{c'\zeta_i}{2c^3}v_1 n_i\big(\mathbf{R}^2+\mathbf{S}^2-2\mathbf{R}\cdot\mathbf{S}\big)+\frac{cc''-(c')^2}{2c^2}(R_i-S_i)S_1v_1\\
\qquad\qquad\qquad\displaystyle+\frac{v_i}{4c^2}\big[(c^2-\zeta_i)(\mathbf{R}^2+\mathbf{S}^2)-2(3c^2-\zeta_i)\mathbf{R\cdot\mathbf{S}}\big]\displaystyle+\frac{c'}{2c}\big[(R_i-S_i)s_1+(r_i-s_i)S_1\big]\\
\qquad\qquad\qquad\displaystyle+\frac{n_i}{2c^2}\big[(c^2-\zeta_i)(\mathbf{R}\cdot\mathbf{r}+\mathbf{S}\cdot\mathbf{s})
-(3c^2-\zeta_i)(\mathbf{R}\cdot\mathbf{s}+\mathbf{S}\cdot\mathbf{r})\big],\\
\end{cases}\end{equation}
for $i=1,2,3$ and $\zeta_1=\ga, \zeta_2=\zeta_3=\al.$

To continue, one also needs to add quantities, named as $w(t,x), z(t,x)$, to measure the horizontal shifts, corresponding to backward and forward directions, respectively, which provide enough freedom for planar transports. Here we require $w(t,x)$ to  satisfy
\[
\epsilon w(t,x)+o(\epsilon)=x^\epsilon(t)-x(t),
\]
where $x^\epsilon(t)$ and $x(t)$ are two backward characteristics starting from initial points $x^\epsilon(0)$ and $x(0)$.
Similarly, the function $\epsilon z(t,x)$ measures the difference of two forward characteristics.
More precisely, we choose $w, z$ to be the solutions of the following system
\begin{equation}\label{wz}
\begin{cases}
\displaystyle  w_t-cw_x=-c'(v_1+w\partial_x n_1 ),\\
\displaystyle  z_t+cz_x=c'(v_1+z\partial_x n_1 ),\\
w(0,x)=w_0(x),\qquad z(0,x)=z_0(x).
\end{cases}\end{equation}

With the above preparation, we can now define a Finsler norm on the space of tangent vectors
  $(\mathbf{v},\mathbf{r},\mathbf{s})$ and the flow itself $(\mathbf{n}, \mathbf{R}, \mathbf{S})$ as
\begin{equation}\label{Finsler v}
\|(\mathbf{v},\mathbf{r},\mathbf{s})\|_{(\mathbf{n},\mathbf{R},\mathbf{S})}: = \inf_{\mathbf{v}, \mathbf{r^*}, \mathbf{s^*}, w, z} \|(\mathbf{v}, \mathbf{r^*},\mathbf{s^*}, w, z)\|_{(\mathbf{n},\mathbf{R},\mathbf{S})},
\end{equation}
where the infimum is taken over the set of vertical displacements $\mathbf{v},\mathbf{r^*}=(r^*_1,r^*_2,r^*_3),\mathbf{s^*}=(s^*_1,s^*_2,s^*_3)$ and horizontal shifts $w,z$ which satisfy equations \eqref{vx}, \eqref{vt}, \eqref{wz} and
\begin{equation}\label{rseq}
\begin{cases}
\displaystyle r^*_i=r_i+w\partial_xR_i+\frac{n_i}{8c^3}\big[(c^2-\zeta_i)\mathbf{S}^2
-2(3c^2-\zeta_i)\mathbf{R}\cdot\mathbf{S}\big](w-z)
-\frac{c'}{4c^2}(w-z)R_1S_i,\\
\displaystyle s^*_i=s_i+z\partial_xS_i+\frac{n_i}{8c^3}\big[(c^2-\zeta_i)\mathbf{R}^2
-2(3c^2-\zeta_i)\mathbf{R}\cdot\mathbf{S}\big](w-z)
-\frac{c'}{4c^2}(w-z)R_iS_1,
\end{cases}
\end{equation}
for $i=1,2,3$ and $ \zeta_1=\ga, \zeta_2=\zeta_3=\al.$ Next, the norm $\|(\mathbf{v}, \mathbf{r^*},\mathbf{s^*}, w, z)\|_{(\mathbf{n},\mathbf{R},\mathbf{S})}$ is defined as
\begin{equation}\label{norm1}
\begin{split}
&\ \|(\mathbf{v}, \mathbf{r^*},\mathbf{s^*}, w, z)\|_{(\mathbf{n},\mathbf{R},\mathbf{S})}\\
&:=~\kappa_0\int_\mathbb{R} \big[|w|\, \mathcal{V}^-+|z|\, \mathcal{V}^+\big]\,dx+\kappa_1\int_\mathbb{R}  \big[|w|(1+\mathbf{R}^2)\, \mathcal{V}^-
+|z|(1+\mathbf{S}^2)\, \mathcal{V}^+\big]\,dx\\
&\quad+\kappa_2\sum_{i=1}^3\int_\mathbb{R}  \Big|v_i+\frac{R_iw-S_iz}{2c}\Big|\big[(1+\mathbf{R}^2)\, \mathcal{V}^-
+(1+\mathbf{S}^2)\, \mathcal{V}^+\big]\,dx\\
&\quad+\kappa_3\int_\mathbb{R} \Big[ \Big|w_x+\frac{c'}{4c^2}(w-z)S_1\Big|\, \mathcal{V}^-+
\Big|z_x+\frac{c'}{4c^2}(w-z)R_1\Big|\, \mathcal{V}^+\Big]\,dx \\
&\quad+\kappa_4\sum_{i=1}^3\int_\mathbb{R}  \Big[\Big|r^*_i+R_i\big(w_x+\frac{c'}{4c^2}(w-z)S_1\big)\Big|\, \mathcal{V}^-
+\Big|s^*_i+S_i\big(z_x+\frac{c'}{4c^2}(w-z)R_1\big)\Big|\, \mathcal{V}^+\Big]\,dx \\
&\quad+\kappa_5\int_\mathbb{R} \Big[ \Big|2\mathbf{R}\cdot \mathbf{r^*}+\mathbf{R}^2w_x+\frac{c'}{4c^2}(w-z)\mathbf{R}^2S_1\Big|\, \mathcal{V}^-\\
&\qquad\qquad\qquad+
\Big|2\mathbf{S}\cdot \mathbf{s^*}+\mathbf{S}^2z_x+\frac{c'}{4c^2}(w-z)\mathbf{S}^2R_1\Big|\, \mathcal{V}^+\Big]\,dx \\
&=: \sum_{j=0}^5\kappa_j\big(\int_\mathbb{R}  J_j^-\, \mathcal{V}^-\,dx+\int_\mathbb{R}  J_j^+\, \mathcal{V}^+\,dx\big)=: \sum_{j=0}^5\kappa_j I_j,
\end{split}
\end{equation}
where $\kappa_j$, $j=0,1,\cdots,5$ are the constants to be determined later, and $I_j, J_j^-, J_j^+$ are the corresponding terms in the above equation. On the other hand, in view of \eqref{balance}, the forward or backward energy might increase during the wave interaction, although the total energy is conserved. To balance this possible energy increase, a pair of {\em  interaction potentials} $\mathcal{V}^+/\mathcal{V}^-$ for forward/backward directions need to be added in \eqref{norm1} as \begin{equation*}\label{W}
\mathcal{V}^-:=1+\int_{-\infty}^x \mathbf{S}^2(y)\,dy,\quad
\mathcal{V}^+:=1+\int^{+\infty}_x \mathbf{R}^2(y)\,dy.
\end{equation*}
Then it follows from \eqref{balance} that
\begin{equation}\label{Westimate}
\begin{cases}
\displaystyle\mathcal{V}^-_t-c\mathcal{V}^-_x=
-2c\mathbf{S}^2+\int_{-\infty}^x \big[\frac{c'}{2c}(R_1\mathbf{S}^2-\mathbf{R}^2S_1)\big]\,dy\leq
-2c_0\mathbf{S}^2+G(t),\\
\displaystyle\mathcal{V}^+_t+c\mathcal{V}^+_x=
-2c \mathbf{R}^2+\int^{+\infty}_x \big[\frac{c'}{2c}(\mathbf{R}^2S_1-R_1\mathbf{S}^2)\big]\,dy\leq
-2c_0\mathbf{R}^2+G(t),\\
\end{cases}
\end{equation}
with
$\displaystyle G(t):=\int_{-\infty}^{+\infty} \Big|\frac{c'}{2c}(\mathbf{R}^2S_1-R_1\mathbf{S}^2)\Big|\,dy.$
Moreover, with the aid of \cite{CCD}, we can see that
\begin{equation}\label{4.9}
\int_0^T G(t)\leq C_T,
\end{equation}
for some constant $C_T$ depending only on $T$ and the total energy.

\vspace{.2cm}

 Now we briefly explain how to obtain $J_j^-, j=0,1,\cdots,5$ in \eqref{norm1}. And $J_j^+$ is symmetric for forward waves.

\vspace{.2cm}

\paragraph{\bf (1)} $J_1^-$ measures [change in $x$]$\cdot (1+\mathbf{R}^2)$, where
\[[\hbox{change in }x]=\lim_{\epsilon\rightarrow 0}\epsilon^{-1}(x^\epsilon-x)=w(x).\]

The terms in $I_0$ are corresponding to the variation of $|x|$ with base measure  with density $1$, which are added for a technical purpose.

\vspace{.2cm}
\paragraph{\bf (2)} $J_2^-$ measures $\displaystyle\sum_{i=1}^3$ [change in $n_i$]$\cdot (1+\mathbf{R}^2)$, where
\begin{equation*}
\begin{split}
[\hbox{change in }n_i]=\lim_{\epsilon\rightarrow 0}\frac{n_i^\epsilon(x^\epsilon) - n_i(x)}{ \epsilon} &= v_i(x) + \partial_x n_i(x) w(x)\\
&= v_i(x) + \frac{R_i(x)-S_i(x)}{2c(n_1(x))}w(x)\\
&= v_i(x) + \frac{R_iw-S_iz}{2c}+\frac{z-w}{2c}S_i.
\end{split}\end{equation*}
Here the term $\frac{z-w}{2c}S_i$ on the above equation is just balanced with the {\bf relative shift} term.

Here we use this term to introduce how to calculate the relative shift term. By the third equation of $\eqref{R-S-eqn}$, we have
$
\partial_x n_i=\frac{R_i-S_i}{2c}.
$
That is, roughly speaking,
\beq\label{deltau}
\Delta n_i\approx\frac{\Delta x}{2c}(R_i-S_i)=\frac{z-w}{2c}(R_i-S_i).
\eeq
Here the $S_i$ term balances $\frac{z-w}{2c}S_i$. We omit the $R_i$ term since it is a lower order term.



\paragraph{\bf (3)} $J_4^-$ measures $\displaystyle\sum_{i=1}^3$ [change of base  measure with density $R_i$]. More precisely
\begin{equation*}
\begin{split}
&\quad \lim_{\epsilon\rightarrow 0}\frac{ R_i^\epsilon(x^\epsilon)\,dx^\epsilon- R_i(x)\,dx}{\epsilon}\\
&= \lim_{\epsilon\rightarrow 0}\frac{ \Big( R_i^\varepsilon(x^\epsilon) -R_i(x) \Big)\,dx^\epsilon + R_i(x)(dx^\epsilon-dx)}{\epsilon} \\
&=  \Big(r_i(x) + w(x)\partial_x R_i(x)+R_i(x)w_x(x)\Big)\,dx ,
\end{split}
\end{equation*}
which together with the relative shift term gives $J_4^-$. Here we add some subtle adjustments in the relative shift terms to take account of interactions between forward and backward waves using \eqref{R-S-eqn}. As mentioned before, this is a new term comparing to the metric in \cite{BC2015}.

$J_3^-$ measures [change of base  measure with density $1$],  which is added to close the estimate of time derivatives for $J_4^-$. This term is to some extend lower order term of $J_4^-$.
\vspace{.2cm}
\paragraph{\bf (4)} $J_5^-$ measures [change of base  measure with density $\mathbf{R}^2$], using the identity
\begin{equation*}
\begin{split}
\big(\mathbf{R}^\epsilon(x^\epsilon) \big)^2 & = \mathbf{R}^2(x^\epsilon) + 2\epsilon \mathbf{R}(x^\epsilon)\cdot \mathbf{r}(x^\epsilon) + o(\epsilon) \\
& = \mathbf{R}^2(x) + 2\epsilon w(x) \mathbf{R}(x)\cdot \mathbf{R}_{x}(x) + 2\epsilon \mathbf{R}(x)\cdot \mathbf{r}(x) + o(\epsilon),
\end{split}
\end{equation*}
we obtain that
\begin{equation}\label{exp1}
\begin{split}
\big( \mathbf{R}^\epsilon_x(x^\epsilon) \big)^2 dx^\epsilon - \mathbf{R}^2(x) dx
=\Big( 2\epsilon \mathbf{R}(x)\cdot \mathbf{R}_{x}(x)w(x)  + 2\epsilon \mathbf{R}(x) \cdot\mathbf{r}(x) +\epsilon \mathbf{R}^2(x)w_x(x)+o(\epsilon)\Big)\,dx.
\end{split}
\end{equation}
On the other hand, as in \eqref{deltau}, if the mass with density $\mathbf{S}^2$ is transported from $x$ to $x+\epsilon z(x)$, in view of \eqref{balance}, the relative shift between forward and backward waves will contribute
\begin{equation}\label{exp2}
\frac{c'}{2c}(\mathbf{R}^2S_1-R_1\mathbf{S}^2)\frac{z-w}{2c} \epsilon.
\end{equation}
Subtracting \eqref{exp2} from  \eqref{exp1} we have
\begin{equation}\label{exp3}
2 \mathbf{R}\cdot\mathbf{r} + 2 \mathbf{R}\cdot \mathbf{R}_{x}w  + \mathbf{R}^2w_x
+\frac{c'}{4c^2}(\mathbf{R}^2S_1-R_1\mathbf{S}^2)(w-z).
\end{equation}
By using $|\mathbf{n}|=1$ and $c'=\frac{\gamma-\alpha}{c}n_1$, so that $\mathbf{R}\cdot\mathbf{n}=0$, we further obtain
\begin{equation*}\label{exp4}
2 \mathbf{R}\cdot\mathbf{r}^* = 2 \mathbf{R}\cdot\mathbf{r} + 2 \mathbf{R}\cdot \mathbf{R}_{x}w  -\frac{c'}{4c^2}R_1\mathbf{S}^2(w-z).
\end{equation*}
This together with \eqref{exp3} gives the term $J_6^-$.

\vspace{.2cm}

Now we state the main result of this section, which is showing that the norm of tangent vectors defined in \eqref{Finsler v} satisfies a Gr\"{o}wnwall type inequality.

\begin{Lemma}\label{lem_est}
 Let $(\mathbf{n},\mathbf{R},\mathbf{S})(t,x)$ be a smooth solution to \eqref{vwl} and \eqref{R-S-eqn} for $t\in[0,T]$, with $T>0$ be given. Assume that the first order perturbations $(\mathbf{v},\mathbf{r},\mathbf{s})$ satisfy the corresponding equations \eqref{vtt}--\eqref{rt}. Then it follows that
\begin{equation}\label{normest}
\|(\mathbf{v},\mathbf{r},\mathbf{s})(t)\|_{(\mathbf{n},\mathbf{R},\mathbf{S})(t)}\leq C\|(\mathbf{v},\mathbf{r},\mathbf{s})(0)\|_{(\mathbf{n},\mathbf{R},\mathbf{S})(0)},
\end{equation}
with the constant $C$ depending only on the initial total energy and $T$.
\end{Lemma}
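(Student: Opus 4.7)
The plan is a Gr\"onwall-type argument on the Finsler norm. Fix any admissible tangent data $(\mathbf{v},\mathbf{r}^\ast,\mathbf{s}^\ast,w,z)$ at $t=0$ satisfying \eqref{vx}, \eqref{vt}, \eqref{wz}, \eqref{rseq}, and propagate it forward in time using the linearized equations \eqref{vtt}--\eqref{rt} together with the transport equations for $w,z$ in \eqref{wz}. I then aim to establish
\[
\frac{d}{dt}\|(\mathbf{v},\mathbf{r}^\ast,\mathbf{s}^\ast,w,z)\|_{(\mathbf{n},\mathbf{R},\mathbf{S})} \leq C(t)\,\|(\mathbf{v},\mathbf{r}^\ast,\mathbf{s}^\ast,w,z)\|_{(\mathbf{n},\mathbf{R},\mathbf{S})},
\]
with $\int_0^T C(t)\,dt$ controlled only by $T$ and initial energy; the integrability of $C(t)$ will use \eqref{4.9} to absorb the contribution of $G(t)$ coming from \eqref{Westimate}. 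Since the propagated data remain admissible at each $t$, taking the infimum on the left in \eqref{Finsler v} and the infimum at $t=0$ on the right yields \eqref{normest}.

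The computation proceeds term by term on $I_j=\int J_j^-\mathcal{V}^-\,dx+\int J_j^+\mathcal{V}^+\,dx$. The key observation is that each $J_j^-$ is the absolute value of a quantity $A_j$ essentially transported along backward characteristics $\dot{x}=-c$; its material derivative $\partial_t A_j-c\partial_x A_j$ reduces to a polynomial source in $(\mathbf{R},\mathbf{S},\mathbf{n},\mathbf{v},\mathbf{r}^\ast,\mathbf{s}^\ast,w,z)$ coming from \eqref{vtt}--\eqref{rt}, while the relative-shift adjustments already baked into \eqref{rseq} and into the combinations inside $J_2^-$, $J_4^-$, $J_5^-$ cancel the ``manually created'' interaction terms produced by shifting. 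Combining $\partial_t|A_j|-c\partial_x|A_j|=\mathrm{sgn}(A_j)\cdot(\text{source})$ with $\mathcal{V}_t^--c\mathcal{V}_x^-\leq -2c_0\mathbf{S}^2+G(t)$ from \eqref{Westimate} and integrating by parts to dispose of the transport divergence, one obtains
\[
\dot{I}_j \leq -2c_0\!\int_{\mathbb{R}}|A_j|\,\mathbf{S}^2\,\mathcal{V}^-\,dx+G(t)\,I_j+\text{cross terms},
\]
together with the symmetric expression involving $\mathcal{V}^+$. The ``good'' reservoir $-2c_0(\cdots)\mathbf{S}^2\mathcal{V}^-$ supplies the damping needed to absorb the crossing-type cross terms such as $\mathbf{R}^2 S_1$ that appear through \eqref{balance}.

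The main obstacle is the same-family energy transfer between the components $n_i$, that is, the quadratic sources of type $\tfrac{c'}{2c}(R_i-S_i)R_1$ in \eqref{rt}: these do not ``cross'' characteristic families and therefore cannot be dominated by the interaction-potential decay. Here I will exploit the cancellation emphasized in Section~2, namely that $r_i^\ast+R_i\bigl(w_x+\tfrac{c'}{4c^2}(w-z)S_1\bigr)$ inside $J_4^-$ is exactly the infinitesimal variation of the base measure with density $R_i$ under a backward shift, so that the material derivative of this combination enjoys an algebraic identity converting the offending $R_1$-source into quantities already controlled by $J_0^-,\ldots,J_3^-$. This is the system-level replacement of the scalar cancellation $\dot{I}_2+\dot{I}_5$ from \cite{BC2015} pointed out in the introduction. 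Once this cancellation is secured, the constants $\kappa_0,\ldots,\kappa_5$ can be chosen in an appropriate hierarchy (each chosen sufficiently large relative to its neighbours so that the decay reservoirs in $\dot{I}_j$ dominate the bad cross terms inherited from the other $\dot{I}_k$), producing a constant of the form $C(t)=C_1+C_2 G(t)$; Gr\"onwall's inequality together with \eqref{4.9} then closes the loop and delivers \eqref{normest}.
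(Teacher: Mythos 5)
Your proposal follows essentially the same route as the paper's proof: reduce \eqref{normest} to the Gr\"onwall inequality \eqref{est on w and z} for admissible $(\mathbf{v},\mathbf{r^*},\mathbf{s^*},w,z)$, compute $\dot I_j$ term by term along characteristics using \eqref{vtt}--\eqref{rt} and the decay $-2c_0\mathbf{S}^2+G(t)$ of the interaction potentials, exploit the $J_4^-$ cancellation for same-family transfer, choose the $\kappa_j$ in a hierarchy, and close with \eqref{4.9}. The only point worth making explicit, which the paper emphasizes and your ``appropriate hierarchy'' phrase leaves implicit, is that such a choice of weights exists precisely because the graph of high-order dependencies $k\mapsto\mathcal{F}_k^h$ in the estimates \eqref{I0est}--\eqref{I5est} contains no cycle; otherwise the argument is the same.
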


\begin{proof}
To achieve \eqref{normest}, it suffices to show that
\begin{equation}\label{est on w and z}
{d \over dt}\|(\mathbf{v}, \mathbf{r^*},\mathbf{s^*}, w,z)(t)\|_{(\mathbf{n},\mathbf{R},\mathbf{S})(t)} \leq a(t) \|(\mathbf{v}, \mathbf{r^*},\mathbf{s^*}, w,z)(t)\|_{(\mathbf{n},\mathbf{R},\mathbf{S})(t)},
\end{equation}
for  any $w, z$ and $\mathbf{r^*}, \mathbf{s^*}$ satisfying (\ref{wz}) and (\ref{rseq}), with a local integrable function $a(t)$.
In fact, by elaborate calculations on the time derivatives of all terms in \eqref{norm1}, we have
\begin{equation}\label{sum}\left.\begin{array}{l}
\displaystyle\frac{dI_k}{dt}\leq~
C\sum_{\ell\in {\mathcal F}^l_k}
\left(\int_\mathbb{R}  (1+|\mathbf{S}|)\,J_\ell^-\,{\mathcal V}^- \,dx+\int_\mathbb{R}
(1+|\mathbf{R}|)J_\ell^+\,\,{\mathcal V}^+ \,dx
\right) \\[2mm]
\displaystyle \qquad \qquad+C\sum_{\ell\in {\mathcal F}^h_k}
\left(\int_\mathbb{R} (1+ \mathbf{S}^2)\,J_\ell^-\,{\mathcal V}^- \,dx+\int_\mathbb{R}   (1+\mathbf{R}^2)\,J_\ell^+\,{\mathcal V}^+\, dx
\right)\\[2mm]
\displaystyle\qquad\qquad+G(t)I_k-c_0\left(\int_\mathbb{R}    \mathbf{S}^2\,J_k^-\,{\mathcal V}^- \,dx+\int_\mathbb{R}   \mathbf{R}^2\,J_k^+\,{\mathcal V}^+ \,dx
\right).
\end{array}\right.
\end{equation}
The detail calculation for \eqref{sum} can be found in Appendix \ref{app}. Here $\mathcal {F}^l_k,\mathcal {F}^h_k\subset\{0,1,2,\cdots,5\}$ are suitable sets of indices from the estimates  \eqref{I0est}, \eqref{I1est}, \eqref{I2est}, \eqref{I3est}, \eqref{I4est} and \eqref{I5est}, where a graphical summary of all the a priori estimate is illustrated in Fig. \ref{f:wa36}.
For example, by \eqref{I3est}, $\mathcal {F}^l_3=\{2,3,4\}$ and $\mathcal {F}^h_3=\{1\}$.
Throughout the paper, $C>0$ is a generic constant depending only on the initial total energy and $T$, which may vary in different estimates.

\begin{figure}[htbp]
   \centering
\includegraphics[scale=0.6]{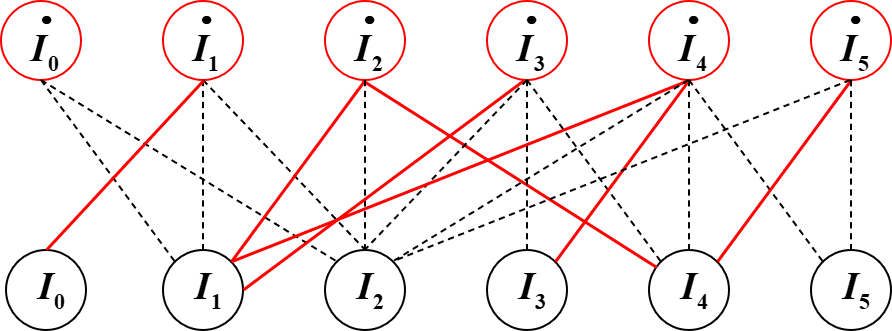}
\caption{ $\dot{I_k}=\frac{d I_k}{dt}$. If $\ell\in {\mathcal F}_k^l$, then $\dot{I_k}$ and $I_\ell$ are connected by a dash line. If $\ell\in {\mathcal F}_h^l$, then $\dot{I_k}$ and $I_\ell$ are connected by a solid line. $k\rightarrow {\mathcal F}_k^h \subset\{0,1,\cdots 5\}$ has {\bf{no cycle!}}  Choose $\kappa_k$ in a certain order
($\kappa_0\gg\kappa_1\gg\kappa_3\gg\kappa_4\gg \kappa_2, \kappa_5
$) to prove \eqref{est on w and z}. }
   \label{f:wa36}
\end{figure}

Since there is no cycle for the relation tree ${\mathcal F}_k^h$,
we can choose a suitable small constant $\delta>0$, with the weighted norm defined by
\begin{equation*}
\|(\mathbf{v}, \mathbf{r^*},\mathbf{s^*}, w,z)(t)\|_{(\mathbf{n},\mathbf{R},\mathbf{S})(t)} :=I_0+\delta I_1+\delta^4 I_2+\delta^2 I_3+\delta^3 I_4+\delta^4 I_5,
\end{equation*}
we arrive at the desired estimate \eqref{est on w and z}. Therefore, the proof of Lemma \ref{lem_est} is finished.
\end{proof}

\section{Metric for piecewise smooth solutions}
Having constructed a weighted norm on tangent vectors for smooth solutions, our main goal now is how to extend this metric to general weak solutions.
By the strong nonlinearity of equations, solutions with smooth initial data can lose regularity in finite time. When this happens, the tangent vector ${\bf v}$ may no longer exist since there may be no regular path between two solutions. Even if the tangent vector does exist, it is not obvious that the estimate in Lemma \ref{lem_est} holds.

In this section, we first extend the metric to piecewise smooth solutions.

A natural question arises as whether there are a dense set of
piecewise smooth paths of solutions, whose weighted length can be controlled in time. We note that an analogous theorem proved in \cite{CCD} by authors gives a positive answer to this question. Roughly speaking, we proved that, for generic smooth initial data, the solution is piecewise smooth. Its gradient blows up along finitely many smooth curves in the $t$-$x$ plane. In subsection \ref{sec:4.1},  we first review this basic construction and the characterization of generic singularities \cite{CCD}.

\subsection{Generic regularity and smooth path of solutions}\label{sec:4.1}
We define the forward and backward characteristics as follows:
\begin{equation*}
\begin{cases}
\frac{d}{ds}x^\pm(s,t,x)=\pm c(n_1(s,x^\pm(s,t,x))),\\
x^\pm|_{s=t}=x.
\end{cases}
\end{equation*}
Then we define the coordinate transformation $(t,x)\to (X,Y)$ where
\begin{equation*}
X~:=~\int_0^{x^-(0,t,x)}[1+\mathbf R^2(0,y)]\,dy, \quad \text{and }\quad Y~:=~\int^0_{x^+(0,t,x)}[1+\mathbf S^2(0,y)]\,dy.
\end{equation*}
Of course this implies
\begin{equation}\label{XY}
X_t-c(n_1)X_x=0,\quad Y_t+c(n_1)Y_x=0.
\end{equation}
Furthermore, for any smooth function $f$, by using \eqref{XY}, we obtain  that
\begin{equation}\label{2.15}
\begin{cases}
f_t+c(n_1)f_x=(X_t+c(n_1)X_x)f_X=2c(n_1)X_x f_X,\\
f_t-c(n_1)f_x=(Y_t-c(n_1)Y_x)f_Y=-2c(n_1)Y_x f_Y.
\end{cases}
\end{equation}

Now, we choose new variables to avoid the blowup as:
\begin{eqnarray*}
   &&\displaystyle p=\frac{1+|\mathbf R|^2}{X_x},\qquad q=\frac{1+|\mathbf S|^2}{-Y_x}, \nonumber \\
   &&\displaystyle\mathbf{L}=(l_1,l_2,l_3)=\frac{\mathbf R}{1+|\mathbf R|^2},\quad \mathbf m=(m_1,m_2,m_3)=\frac{\mathbf S}{1+|\mathbf S|^2},  \nonumber\\
  &&\displaystyle h_1=\frac{1}{1+|\mathbf R|^2}, \quad h_2=\frac{1}{1+|\mathbf S|^2}.\nonumber
\end{eqnarray*}
With the above notations, these variables satisfy the following semi-linear system, c.f. \cite{CZZ}
\begin{equation}\label{2.17}
\begin{cases}
&\partial_Y l_i=\displaystyle\frac{q}{8c^3(n_1)}[(c^2(n_1)-\zeta_i)(h_1+h_2-2h_1h_2)-2(3c^2(n_1)-\zeta_i)\mathbf L\cdot \mathbf m]n_i\\
&\displaystyle\qquad\qquad+\frac{c'(n_1)}{4c^2(n_1)}l_1q(l_i-m_i),\\
&\partial_X m_i =\displaystyle\frac{p}{8c^3(n_1)}[(c^2(n_1)-\zeta_i)(h_1+h_2-2h_1h_2)-2(3c^2(n_1)-\zeta_i)\mathbf L\cdot \mathbf m]n_i\\
&\displaystyle\qquad\qquad-\frac{c'(n_1)}{4c^2(n_1)}m_1p(l_i-m_i),\\
&\partial_Y\mathbf n =\displaystyle \frac{q}{2c(n_1)}\mathbf m, \qquad (\text {or } \quad\partial_X\mathbf n =\frac{p}{2c(n_1)}\mathbf L ),\\
&\partial_Y h_1 =\displaystyle \frac{c'(n_1)}{4c^2(n_1)}ql_1(h_1- h_2),\qquad \partial_X h_2 = \frac{c'(n_1)}{4c^2(n_1)}pm_1(h_2- h_1),\\
&p_Y =\displaystyle -\frac{c'(n_1)}{4c^2(n_1)}pq(l_1 -m_1),\qquad q_X = \frac{c'(n_1)}{4c^2(n_1)}pq(l_1 -m_1),
\end{cases}
\end{equation}
with $i=1,2,3$, $\zeta_1=\gamma$ and $\zeta_2=\zeta_3=\alpha$. Using \eqref{2.15}, by letting $f=t$ or $x$, we obtain the equations
\begin{equation}\label{2.18}
t_X=\frac{ph_1}{2c(n_1)},\quad t_Y=\frac{qh_2}{2c(n_1)},\quad x_X=\frac{ph_1}{2},\quad x_Y=-\frac{qh_2}{2}.
\end{equation}
On the initial line $t=0$ in $(t,x)$ plane, we transform it to a particular curve
\begin{equation*}
\gamma_0=\{(X,Y);~X+Y=0\}\subset\mathbb{R}^2
\end{equation*}
in the $(X,Y)$ plane. Along the curve $\gamma_0$
parameterized by $x\mapsto(\bar{X}(x),\bar{Y}(x)):=(x,-x)$, we assign the boundary data $(\bar{\mathbf n},\bar{\mathbf L}, \bar{\mathbf m},\bar{h}_1, \bar{h}_2,\bar{p},\bar{q})$ defined by their definition evaluated at the initial data \eqref{ID}, that is
\begin{equation*}\label{2.19}
\begin{split}
&\bar{\mathbf n}=\mathbf n_0(x), \quad \bar{\mathbf L}=\mathbf R(0,x)\bar{h}_1, \quad\bar{\mathbf m}=\mathbf S(0,x)\bar{h}_2, \\
&\bar{h}_1=\frac{1}{1+|\mathbf R(0,x)|^2}, \quad \bar{h}_2=\frac{1}{1+|\mathbf S(0,x)|^2},\\
&\bar{p}=1+|\mathbf R(0,x)|^2,  \quad\bar{q}=1+|\mathbf S(0,x)|^2,
\end{split}
\end{equation*}
where
\begin{equation*}
\mathbf R(0,x)=\mathbf n_1+c(n_{10}(x))\mathbf n'_0(x), \quad
\mathbf S(0,x)=\mathbf n_1-c(n_{10}(x))\mathbf n'_0(x).
\end{equation*}

The existence and uniqueness of global weak energy conservative solutions of \eqref{vwl} has been established in \cite{CCD, CZZ}, by transforming the solution $\mathbf n(X,Y)$ of \eqref{2.17} to $\mathbf n(t, x)$ on the original variables $(t, x)$:
\begin{Lemma}[\cite{CCD, CZZ}]\label{Lemma 2.1}
Let the generic condition \eqref{gencon} and initial data \eqref{ID} be satisfied.
Then there exists a unique solution $(X,Y)\mapsto (\mathbf n, \mathbf L, \mathbf m, h_1, h_2, p, q, x,t)(X,Y)$ with $p,q>0$ to the system \eqref{2.17}--\eqref{2.18} with boundary data assigned along the line $\gamma_0$. Moreover the set of points
\begin{equation}\label{2.20}
\big\{(t(X,Y),x(X,Y),\mathbf n(X,Y));~(X,Y)\in \mathbb{R}^2\}
\end{equation}
is the graph of a unique conservative solution $\mathbf n=\mathbf n(X,Y)$ to the Cauchy problem \eqref{vwl}--\eqref{ID}.
\end{Lemma}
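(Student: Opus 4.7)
The plan is to prove Lemma \ref{Lemma 2.1} by first solving the semi-linear system \eqref{2.17}--\eqref{2.18} globally in the $(X,Y)$ plane, then showing that the image $\{(t,x,\mathbf{n})(X,Y)\}$ covers the upper half $(t,x)$-plane and defines a conservative weak solution of \eqref{vwl}--\eqref{ID}. The crucial structural feature is that in the $(X,Y)$ variables the nonlinearities become semi-linear with a priori bounded coefficients: $h_1, h_2\in(0,1]$ by definition, $|\mathbf{L}|, |\mathbf{m}|\le 1/2$ (they are $\mathbf{R}/(1+\mathbf{R}^2)$ and $\mathbf{S}/(1+\mathbf{S}^2)$), $|\mathbf{n}|\le 1$, and $c(n_1)\in[\sqrt{\alpha\wedge\gamma},\sqrt{\alpha\vee\gamma}]$. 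So the right-hand sides of the equations for $\mathbf{L}, \mathbf{m}, \mathbf{n}, h_1, h_2, x, t$ are bounded and Lipschitz in all these variables, with $p$ and $q$ entering only linearly.

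\textbf{Local existence.} I will fix any compact rectangle $\mathcal{R}=[X_1,X_2]\times[Y_1,Y_2]$ containing a portion of $\gamma_0=\{X+Y=0\}$, and set up a Picard iteration in $C(\mathcal{R})$ for the unknown $(\mathbf{n},\mathbf{L},\mathbf{m},h_1,h_2,p,q,x,t)$ by integrating each equation along its characteristic variable, i.e.~integrate the $\partial_X$ equations from the boundary point $(-Y,Y)\in\gamma_0$ up to $(X,Y)$ and the $\partial_Y$ equations from $(X,-X)$ up to $(X,Y)$. Because the right-hand sides are smooth functions of the (bounded) unknowns and linear in $p,q$, a standard Gronwall-based contraction argument in the sup norm gives a unique fixed point on $\mathcal{R}$.

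\textbf{Global extension and positivity.} The equations for $p$ and $q$ are linear transport equations along $X$-lines and $Y$-lines respectively: $p_Y=-\frac{c'(n_1)}{4c^2(n_1)}q(l_1-m_1)\cdot p$ and similarly for $q$. Integrating, I obtain $p(X,Y)=\bar p(X)\exp\bigl(-\int_{-X}^Y\frac{c'(n_1)}{4c^2(n_1)}q(l_1-m_1)\,dY'\bigr)$ and an analogous formula for $q$, which shows $p,q>0$ are preserved and grow at most exponentially (bounded on compact sets). Combining with the bounds on $\mathbf{L},\mathbf{m},h_1,h_2,\mathbf{n}$, the local solution extends to all of $\mathbb{R}^2$. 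I then verify the algebraic identities that hold on $\gamma_0$ are propagated: (i) $|\mathbf{n}|^2=1$ follows from $2\mathbf{n}\cdot\partial_Y\mathbf{n}=\partial_Y(|\mathbf{n}|^2)$ combined with $\partial_Y\mathbf{n}=\frac{q}{2c}\mathbf{m}$ and the cancellation $\mathbf{n}\cdot\mathbf{m}=0$, which itself is propagated using the $\partial_X m_i$ equation (an ODE for $\mathbf{n}\cdot\mathbf{m}$ with trivial initial data); (ii) the compatibility $(t_X)_Y=(t_Y)_X$ and $(x_X)_Y=(x_Y)_X$ must be checked using the $h_1, h_2, p, q$ equations, ensuring that $t(X,Y)$ and $x(X,Y)$ are well defined via line integrals.

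\textbf{Transfer back and conservation.} Since $t_X, t_Y\ge 0$ and $x_X>0,\,x_Y<0$, the map $(X,Y)\mapsto(t,x)$ is monotone in each characteristic direction; using $t\to\infty$ along $X$- or $Y$-lines (established from $t_X, t_Y\ge c_0^{-1}h\,p$ etc.\ with $h\,p\ge 1$-type bounds away from singular sets), I will show the image covers $[0,\infty)\times\mathbb{R}$. The solution in $(t,x)$ is then defined as the unique $\mathbf{n}$ such that $(t,x,\mathbf{n})$ lies in the graph \eqref{2.20}; at points where several $(X,Y)$ map to the same $(t,x)$, the $\mathbf{n}$-value agrees because the composition $(X,Y)\mapsto\mathbf{n}$ factors through $(t,x)$ (verified by chain rule and the equations). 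Converting derivatives back via \eqref{2.15} recovers the classical form of \eqref{vwl} wherever the solution is smooth, and a standard limiting/mollification argument gives the distributional equation in general. Energy conservation is built in: $p\,dX$ and $q\,dY$ are by construction the forward/backward energy measures, yielding the conservation laws of Theorem \ref{ECthm}.

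\textbf{Uniqueness and main obstacle.} Uniqueness in the $(X,Y)$ system is automatic from the Picard construction and Gronwall. The more delicate direction is: any conservative solution of \eqref{vwl}--\eqref{ID} in the sense of Theorem \ref{ECthm} must, after the change of variables, solve \eqref{2.17}--\eqref{2.18} with the prescribed boundary data. I would argue this by tracing forward/backward characteristics (well defined since $c$ is bounded and Lipschitz in $n_1$) and invoking the energy-density representation in Theorem \ref{ECthm}(ii)--(iii) to identify $X$ and $Y$ as the cumulative energies from $\pm\infty$; this matches the initial normalization on $\gamma_0$, and then the PDEs for $(\mathbf{L},\mathbf{m},h_1,h_2,p,q)$ follow from differentiating \eqref{R-S-eqn}. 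The main obstacle is this last step: handling the singular part of $\mu^t_\pm$ at $\{n_1=0,\pm1\}$ and showing that the graph representation picks out the correct (conservative) extension past cusp singularities. This is precisely where the generic condition \eqref{gencon} and the structure established in \cite{CCD,CZZ} enter, allowing me to conclude that \eqref{2.20} is the unique admissible solution graph and completing the lemma.
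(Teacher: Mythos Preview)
The paper does not prove this lemma; it is quoted from the references \cite{CCD,CZZ} (and ultimately \cite{BZ} for the scalar prototype) without argument. Your outline is essentially the architecture of those proofs: pass to characteristic coordinates where the system becomes semilinear with a priori bounded coefficients, obtain global existence by Picard iteration plus Gronwall, verify the compatibility identities, and transfer back to $(t,x)$.

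One point you underplay: in your exponential formula for $p$ the exponent contains $\int q\,dY'$, and symmetrically for $q$. To control this you need more than ``bounded on compact sets'' --- you need the uniform bound $\int_{\Lambda}p\,dX,\ \int_{\Lambda}q\,dY\le C(E_0)$ along arbitrary characteristic segments, which in \cite{BZ,CZZ} comes from the conservation law $(pq)_X=(pq)_Y$ and the energy structure. Without this, the Gronwall bounds blow up as the rectangle grows and the global extension does not close. Likewise, the surjectivity of $(X,Y)\mapsto(t,x)$ is not immediate from $t_X,t_Y\ge0$ alone since $h_1,h_2$ may vanish; the cited works handle this by showing the level sets $\{t=\tau\}$ are complete Lipschitz curves in the $(X,Y)$ plane via the energy bounds just mentioned.

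These are refinements rather than genuine gaps: your plan matches the approach in the literature, and the ``main obstacle'' you flag (identifying an arbitrary conservative solution with the graph \eqref{2.20}) is indeed the content of \cite{CCD}.
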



To continue, we introduce the following definitions.

\begin{Definition}\label{def_gensin}
A solution $\mathbf n=\mathbf n(x,t)$ of \eqref{vwl} is called has {\bf generic singularities} for $t\in[0,T]$ if it admits a representation of the form \eqref{2.20}, where

{\rm (i)} the functions $(\mathbf n, \mathbf L, \mathbf m, h_1, h_2, p, q, x,t)(X,Y)$ are $\mathcal{C}^\infty$,

{\rm (ii)} for $t(X,Y)\in[0,T]$, the following generic conditions hold:
\begin{equation*}\label{generic_con}
\begin{cases}
 h_1=0, \mathbf L_X=\mathbf{0} \Longrightarrow \mathbf L_Y\neq \mathbf{0},\mathbf L_{XX}\neq \mathbf{0},\\
h_2=0, \mathbf{m}_Y=\mathbf{0} \Longrightarrow \mathbf{m}_X\neq \mathbf{0},\mathbf{m}_{YY}\neq \mathbf{0},\\
 h_1=0, h_2=0 \Longrightarrow \mathbf{L}_X\neq\mathbf{0},  \mathbf{m}_Y\neq \mathbf{0}.\\
\end{cases}\end{equation*}
\end{Definition}


\begin{Definition}\label{def_repath}
A path of initial data $\Gamma^0:\lambda\mapsto (\mathbf{n}_0^\lambda,\mathbf{n}_1^\lambda)$, $\lambda\in[0,1]$ is called a {\bf piecewise regular path} if the following conditions hold.

{\rm (i)} There exists a continuous map $(X,Y,\lambda)\mapsto (\mathbf n, \mathbf L, \mathbf m, h_1, h_2, p, q, x,t)$ such that the semilinear system \eqref{2.17}--\eqref{2.18} holds for $\lambda\in[0,1]$, and the function $\mathbf{n}^\lambda(x,t)$ whose graph is
\begin{equation*}
\text{ Graph }(\mathbf n^\lambda)=\{(x,t,\mathbf n)(X,Y,\lambda);~(X,Y)\in \mathbb{R}^2\}
\end{equation*}
provides the conservation solution of \eqref{vwl} with initial data $\mathbf n^\lambda(x,0)=\mathbf n^\lambda_0(x),\mathbf n_t^\lambda(x,0)=\mathbf n_1^\lambda(x)$.

{\rm (ii)} There exist finitely many values $0=\lambda_0<\lambda_1<\cdots<\lambda_N=1$ such that the map $(X,Y,\lambda)\mapsto(\mathbf n, \mathbf L, \mathbf m, h_1, h_2, p, q, x,t)$ is $\mathcal{C}^\infty$ for $\lambda\in(\lambda_{i-1},\lambda_i), i=1,\cdots, N$, and the solution $\mathbf n^\lambda=\mathbf n^\lambda(x,t)$ has only generic singularities at time $t=0$.

In addition, if for all $\lambda\in[0,1]\backslash\{\lambda_1,\cdots,\lambda_N\}$, the solution $\mathbf n^\lambda$ has only generic singularities for $t\in [0,T]$, then we say that the path of solution $\Gamma^t: \lambda\mapsto (\mathbf n^\lambda,\mathbf n^\lambda_t)$ is {\bf piecewise regular} for $t\in[0,T]$.
\end{Definition}

The following result shows that the set of piecewise regular paths is dense.
\begin{Corollary}\label{thm_repath}
Assume the generic condition \eqref{gencon} holds.
For any fixed $T>0,$ let $\lambda\mapsto(\mathbf n^\lambda, \mathbf L^\lambda, \mathbf m^\lambda,$ $h_1^\lambda, h_2^\lambda, p^\lambda, q^\lambda, x^\lambda,t^\lambda), \lambda\in[0,1],$ be a smooth path of solutions to the system \eqref{2.17}--\eqref{2.18}. Then there exists a sequence of paths of solutions $\lambda\mapsto(\mathbf n^\lambda_i, \mathbf L^\lambda_i, \mathbf m^\lambda_i,(h_1^\lambda)_i, (h_2^\lambda)_i, p^\lambda_i, q^\lambda_i, x^\lambda_i,$ $t^\lambda_i),$ such that

{\rm (i)} For each $i\geq 1$, the path of the corresponding solution of \eqref{vwl} $\lambda\mapsto \mathbf{n}_i^\lambda$ is regular for $t\in[0,T]$ in the sense of Definition \ref{def_repath}.

{\rm (ii)} For any bounded domain $\Sigma$ in the $(X$,$Y)$ space, the functions $(\mathbf n^\lambda_i, \mathbf L^\lambda_i, \mathbf m^\lambda_i,(h_1^\lambda)_i, (h_2^\lambda)_i, p^\lambda_i,$ $ q^\lambda_i, x^\lambda_i, t^\lambda_i)$ converge to $(\mathbf n^\lambda, \mathbf L^\lambda, \mathbf m^\lambda, h_1^\lambda, h_2^\lambda, p^\lambda, q^\lambda, x^\lambda,t^\lambda)$ uniformly in $\mathcal{C}^k([0,1]\times\Sigma)$, for every $k\geq 1$, as $i\to \infty.$
\end{Corollary}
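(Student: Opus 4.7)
The plan is to adapt the transversality argument for the scalar variational wave equation \cite{CCD} to the vector system \eqref{vwl}, exploiting the fact that in the $(X,Y)$ coordinates the dynamics \eqref{2.17}--\eqref{2.18} is semilinear with smooth coefficients. This ensures that smooth perturbations of the boundary data along $\gamma_0=\{X+Y=0\}$ propagate to smooth perturbations of the whole solution, and that arbitrarily high jets of the solution at any interior point depend continuously on finitely many jets of the boundary data. The generic conditions of Definition \ref{def_gensin} are then algebraic non-degeneracy conditions on a finite jet of $(h_1,h_2,\mathbf L,\mathbf m)$, so achieving them is a Thom-type transversality problem on the three-dimensional parameter space $(X,Y,\lambda)$.

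To construct the sequence I would choose a countable family $\phi_i$ of smooth compactly supported perturbations of the boundary values $(\bar{\mathbf n},\bar{\mathbf L},\bar{\mathbf m},\bar h_1,\bar h_2,\bar p,\bar q)^\lambda$ along $\gamma_0$, respecting the algebraic constraints $|\mathbf n|=1$, $|\mathbf L|^2=h_1(1-h_1)$, $|\mathbf m|^2=h_2(1-h_2)$, $\bar p\bar h_1=1$, $\bar q\bar h_2=1$, and with $\mathcal{C}^k$ norms tending to zero for every $k$. Solving \eqref{2.17}--\eqref{2.18} with the perturbed boundary data yields, by Lemma \ref{Lemma 2.1}, smooth families $\lambda\mapsto(\mathbf n_i^\lambda,\mathbf L_i^\lambda,\ldots)$; uniform $\mathcal{C}^k$ convergence on any bounded $\Sigma\subset\mathbb{R}^2$, which is conclusion (ii), then follows from the smooth dependence of solutions on boundary data for the semilinear system \eqref{2.17}--\eqref{2.18} and a standard Gronwall argument.

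For conclusion (i) I would apply Thom's parametric transversality theorem to the jet maps
\[
\Phi_1=(h_1,\mathbf L_X),\qquad \Phi_2=(h_2,\mathbf m_Y),\qquad \Phi_3=(h_1,h_2),
\]
restricted to the compact parameter region $\{(X,Y,\lambda):t(X,Y,\lambda)\in[0,T]\}$, whose compactness follows from finite propagation speed and the uniform total-energy bound. After a generic perturbation, the zero sets $\Phi_j^{-1}(0)$ intersect the three-dimensional parameter space transversely; by the codimension counts the $\lambda$-projections of these zero sets are discrete and hence, by compactness, finite. On each remaining intersection the secondary non-degeneracy conditions of Definition \ref{def_gensin} ($\mathbf L_Y\neq 0,\mathbf L_{XX}\neq 0$ on $\{h_1=0,\mathbf L_X=0\}$, etc.) can be arranged by further generic perturbation, and a separate adjustment on $\gamma_0$ ensures the initial-time generic-singularity condition of Definition \ref{def_repath}(ii).

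The main obstacle will be verifying that the range of admissible boundary perturbations is large enough to realize the Thom transversality hypotheses after accounting for the algebraic constraints. To do this I would differentiate \eqref{2.17} tangentially and normally along $\gamma_0$ to express each interior jet of order $k$ as an invertible linear combination of boundary jets of order $k$ with polynomial corrections of lower order, showing that the boundary-to-interior jet map is surjective modulo the constraints. This is the vector-valued analogue of the scalar surjectivity lemma in \cite{CCD}, and the vector setting is in fact easier since the three components of $\mathbf L$ (resp.\ $\mathbf m$) give additional free directions in which to perturb.
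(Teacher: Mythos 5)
Your overall architecture (perturb the boundary data on $\gamma_0$, use smooth dependence for the semilinear system \eqref{2.17}--\eqref{2.18} to get the $\mathcal{C}^k$ convergence in (ii), and use parametric transversality in the three-dimensional space $(X,Y,\lambda)$ to get (i)) is exactly the route the paper takes, since it omits the proof and refers to the corresponding arguments in \cite{BC,CCD}. However, the transversality bookkeeping you propose for (i) is set up incorrectly, and this is the heart of the matter. You apply transversality to $\Phi_1=(h_1,\mathbf L_X)$, $\Phi_2=(h_2,\mathbf m_Y)$, $\Phi_3=(h_1,h_2)$ and claim their zero sets have discrete, hence finite, $\lambda$-projections. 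This is false and cannot be arranged by any perturbation: because $|\mathbf L|^2=h_1(1-h_1)$ and $\mathbf L\cdot\mathbf n=0$, the set $\{h_1=0\}$ is effectively of codimension one (a family of singular curves on which $\mathbf L\equiv 0$ and $\nabla h_1=0$ automatically), so $\{h_1=h_2=0\}$ and $\{h_1=0,\ \mathbf L_X=0\}$ are generically one-dimensional in $(X,Y,\lambda)$ and stable under perturbation; their $\lambda$-projections are intervals, not finite sets. Indeed such points are precisely the generic singularities that Definition \ref{def_gensin} \emph{permits} (e.g.\ the birth points of singular curves), so the correct statement is not that they occur at finitely many $\lambda$, but that at them the secondary conditions ($\mathbf L_Y\neq\mathbf 0$, $\mathbf L_{XX}\neq\mathbf 0$, $\mathbf L_X\neq\mathbf 0$, $\mathbf m_Y\neq\mathbf 0$) hold for all but finitely many $\lambda$. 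Your final step, that the secondary conditions ``can be arranged by further generic perturbation'' on the remaining intersections, has the logic inverted for the same reason: a one-parameter family generically does cross the higher-codimension strata at isolated $(X,Y,\lambda)$ points, and these crossings are not removable; they are exactly the exceptional values $\lambda_1,\dots,\lambda_N$ that Definition \ref{def_repath} tolerates.

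The repair, which is what \cite{BC,CCD} actually do, is to apply Thom's parametric transversality not to $\Phi_1,\Phi_2,\Phi_3$ but to the maps encoding the \emph{failure} of genericity, e.g.\ $(h_1,\mathbf L_X,\mathbf L_Y)$, $(h_1,\mathbf L_X,\mathbf L_{XX})$, $(h_1,h_2,\mathbf L_X)$, $(h_1,h_2,\mathbf m_Y)$, and to compute their \emph{effective} codimension on the constraint manifold determined by $|\mathbf n|=1$, $\mathbf L\cdot\mathbf n=\mathbf m\cdot\mathbf n=0$, $|\mathbf L|^2=h_1(1-h_1)$, $|\mathbf m|^2=h_2(1-h_2)$; naive counts are meaningless here since $0$ is never a regular value of your $\Phi_1$ in the unconstrained sense ($h_1=0$ forces $\mathbf L=\mathbf 0$ and $\nabla h_1=0$). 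One then shows the effective codimension of each failure set is at least three, so after a generic choice of the perturbation parameters its intersection with $(X,Y,\lambda)\in\{t\in[0,T]\}\times[0,1]$ consists of isolated points, whose finitely many $\lambda$-coordinates are taken as the $\lambda_i$. This effective-codimension computation, together with the verification that the admissible data perturbations (respecting the constraints) span enough directions in the relevant jet space, is the nontrivial content of the vector-valued case; your closing remark that the vector setting is ``easier'' because the components of $\mathbf L$ are free overlooks that near the singular set $\mathbf L$ is forced to vanish and its components are anything but free. Part (ii) of your argument is fine and agrees with the cited proofs.
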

The proof of this lemma is very similar to the corresponding one in
\cite{BC,CCD}. So we omit the proof and refer the readers to \cite{BC,CCD} for more details.

\subsection{Tangent vectors in transformed coordinates}
Now, we derive an expression for the norm of tangent
vectors \eqref{norm1} as a line integral in $X$-$Y$ coordinates.

For a reference solution $\mathbf{n}(x,t)$ of \eqref{vwl}, and let $\mathbf{n}^\varepsilon(x,t)$ be a family of perturbed solutions. In the $(X$,$Y)$ plane, denote $(\mathbf{n}, \mathbf{L}, \mathbf{m}, h_1, h_2, p, q, x,t)$ and $( \mathbf{n}^\varepsilon, \mathbf{L}^\varepsilon, \mathbf{m}^\varepsilon, h_1^\varepsilon, h_2^\varepsilon, p^\varepsilon, q^\varepsilon, x^\varepsilon, t^\varepsilon)$ be the corresponding smooth solutions of \eqref{2.17}--\eqref{2.18}, and consider the perturbed solutions of the form
\begin{equation*}
( n^\varepsilon_i, l^\varepsilon_i, m^\varepsilon_i, h_1^\varepsilon, h_2^\varepsilon, p^\varepsilon, q^\varepsilon, x^\varepsilon, t^\varepsilon)=(n_i, l_i, m_i, h_1, h_2, p, q, x,t)+\varepsilon(N_i, L_i, M_i,H_1, H_2, P, Q,\mathcal{X},\mathcal{T})+o(\varepsilon).
\end{equation*}
with $i=1,2,3, \mathbf{N}=(N_1,N_2,N_3), \mathbf{M}=(M_1,M_2,M_3),$ and $\mathcal{L}=(L_1,L_2,L_3)$. Here we denote the curve in ($X,Y)$ plane by
\begin{equation*}\label{curve}
\Lambda_\tau=\{(X,Y)\,|\,t(X,Y)=\tau\}=\{(X,Y(\tau, X));X\in\mathbb{R}\}=\{(X(\tau, Y),Y);Y\in\mathbb{R}\}\end{equation*}
and the perturbed curve as
 $$ \Lambda_\tau^\varepsilon=\{(X,Y)\,|\,t^\varepsilon(X,Y)=\tau\}=\{(X,Y^\varepsilon(\tau, X));X\in\mathbb{R}\}=\{(X^\varepsilon(\tau, Y),Y);Y\in\mathbb{R}\}.$$
By the smooth coefficients of system \eqref{2.17}--\eqref{2.18}, we see that the first order perturbations are well defined for $(X,Y)\in\mathbb{R}^2$ and also satisfy a linearized system.
In what follows, we express the terms $I_0$--$I_5$ of \eqref{norm1} in terms of $(\mathbf{N}, \mathcal{L}, \mathbf{M}, H_1, H_2, P, Q,\mathcal{X},\mathcal{T})$. First, we observe that
$$t^\varepsilon\big(X,Y^\varepsilon(\tau,X)\big)
=t^\varepsilon\big(X^\varepsilon(\tau,Y),Y\big)=\tau.$$
By the implicit function theorem, at $\varepsilon=0$, it holds that
\begin{equation}\label{xvar0}
\frac{\partial X^\varepsilon}{\partial\varepsilon}\Big|_{\varepsilon=0}
=-\mathcal{T}\frac{2c}{ph_1},
\quad {\rm and}\quad
\frac{\partial Y^\varepsilon}{\partial\varepsilon}\Big|_{\varepsilon=0}=-\mathcal{T}\frac{2c}{qh_2}.
\end{equation}

(1). The change in $x$ is compute by

\begin{equation}\label{wXchange}
\begin{split}
w&=\displaystyle\lim_{\varepsilon\to 0}\frac{x^\varepsilon\big(X,Y^\varepsilon(\tau,X)\big)
-x\big(X,Y(\tau,X)\big)}{\varepsilon}\\
&= \mathcal{X}\big(X,Y(\tau,X)\big)+x_Y\cdot \frac{\partial Y^\varepsilon}{\partial \varepsilon}\Big|_{\varepsilon=0}
=\big(\mathcal{X}+c\mathcal{T}\big)(X,Y(\tau,X)).
\end{split}
\end{equation}
Similar to \eqref{wXchange}, by \eqref{xvar0}, we have
\begin{equation}\label{zYchange}
\begin{split}
z&=\displaystyle\lim_{\varepsilon\to 0}\frac{x^\varepsilon\big(X^\varepsilon(\tau,Y),Y\big)
-x\big(X(\tau,Y),Y\big)}{\varepsilon}\\
&= \mathcal{X}\big(X(\tau,Y),Y\big)+x_X\cdot \frac{\partial X^\varepsilon}{\partial \varepsilon}\Big|_{\varepsilon=0}
=\big(\mathcal{X}-c\mathcal{T}\big)(X(\tau,Y),Y).
\end{split}
\end{equation}

(2). To see the change in $n_i$, observe that \eqref{2.17} and \eqref{xvar0} implies
\begin{equation*}
\begin{split}
v+\partial_x n_i w&=\displaystyle\frac{d}{d\varepsilon}n_i^\varepsilon
\big(X,Y^\varepsilon(\tau,X)\big)\Big|_{\varepsilon=0}\\
&= N_i\big(X,Y(\tau,X)\big)+\partial_Yn_i\cdot \frac{\partial Y^\varepsilon}{\partial \varepsilon}\Big|_{\varepsilon=0}
=\big(N_i-\frac{\mathcal{T} m_i}{h_2}\big)(X,Y(\tau,X)),
\end{split}
\end{equation*}
with $i=1,2,3,$ so that
\begin{equation}\label{uXchange}
v_i+\frac{R_iw-s_iz}{2c}=v_i+\partial_x n_i w+\frac{w-z}{2c}S_i=N_i(X,Y(\tau,X)),
\end{equation}
where we have used \eqref{R-S-eqn}.

(3). Now we want to estimate the change in the base measure with density $1+\mathbf{R}^2$.  By \eqref{2.17}, we obtain
\begin{equation}\label{pXchange}
\frac{d}{d\varepsilon}p^\varepsilon\big(X,Y^\varepsilon(\tau,X)\big)\Big|_{\varepsilon=0}
= P+p_Y\cdot \frac{\partial Y^\varepsilon}{\partial \varepsilon}\Big|_{\varepsilon=0}=P+\frac{c'\mathcal{T}p }{2c h_2}(l_1-m_1).
\end{equation}
Applying \eqref{2.17} again, we have the change in base measure with density $\mathbf{R}^2$:
\begin{equation}\label{R2change}
\begin{split}
&\frac{d}{d\varepsilon}\Big(\big(p^\varepsilon(1-h_1^\varepsilon)\big)\big(X,Y^\varepsilon(\tau,X)\big)\Big)\Big|_{\varepsilon=0}\\
&= \big(P+p_Y\cdot \frac{\partial Y^\varepsilon}{\partial \varepsilon}\Big|_{\varepsilon=0}\big)(1-h_1)-p\big(H_1+\partial_Yh_1\cdot \frac{\partial Y^\varepsilon}{\partial \varepsilon}\Big|_{\varepsilon=0}\big)\\
&=P(1-h_1)-H_1p+\frac{c'\mathcal{T}p}{2c}\Big(\frac{l_1-m_1}{h_2}+\frac{m_1h_1}{h_2}-l_1\Big).
\end{split}\end{equation}
As an immediate consequence of \eqref{pXchange} and \eqref{R2change}, we can achieve the change in base measure with density 1 by subtracting \eqref{pXchange} from \eqref{R2change}:
\begin{equation}\label{1change}
\begin{split}
h_1P+H_1p++\frac{c'\mathcal{T}p}{2c}\Big(l_1-\frac{m_1h_1}{h_2}\Big).
\end{split}\end{equation}

(4). Finally, for the change in the base measure with density $R_i$, it follows from \eqref{2.17}  and  \eqref{xvar0} that
\begin{equation*}
\begin{split}
&\quad \frac{d}{d\varepsilon}\Big(p^\varepsilon l^\varepsilon_i \big(X,Y^\varepsilon(\tau,X)\big)\Big)\Big|_{\varepsilon=0}\\
&= p\big(L_i+\partial_Yl_i\cdot \frac{\partial Y^\varepsilon}{\partial \varepsilon}\Big|_{\varepsilon=0}\big)+l_i\big(P+\partial_Y p\cdot \frac{\partial Y^\varepsilon}{\partial \varepsilon}\Big|_{\varepsilon=0}\big)\\
&=pL_i+l_iP-\frac{\mathcal{T}n_ip}{4c^2 h_2}[(c^2-\zeta_i)(h_1+h_2-2h_1h_2)-2(3c^2-\zeta_i)\mathbf{L}\cdot\mathbf{m}]
+\frac{c'\mathcal{T}p}{2ch_2}(l_1m_i-l_im_1),
\end{split}\end{equation*}
with $i=1,2,3$, $\zeta_1=\gamma$ and $\zeta_2=\zeta_3=\alpha$.

Notice that

$$(1+\mathbf{R}^2)dx=pdX,\quad (1+\mathbf{S}^2)dx=-qdY.$$
Base on the estimate \eqref{wXchange}--\eqref{1change}, we deduce that the weighted norm \eqref{norm1} can be rewritten as a line integral over the line $\Lambda_\tau$ defined in \eqref{curve}:
\begin{equation}\label{normXY}
 \|(\mathbf{v}, \mathbf{r^*},\mathbf{s^*}, w, z)\|_{(\mathbf{n},\mathbf{R},\mathbf{S})}=\sum_{j=0}^5\kappa_j\int_{\Lambda_\tau}
\Big(|J_j^-|\mathcal{V}^-dX+|J_j^+|\mathcal{V}^+dY\Big),
\end{equation}
where
\begin{eqnarray*}
J_0^-&=&\big(\mathcal{X}+c\mathcal{T}\big)ph_1,\\
J_1^-&=&\big(\mathcal{X}+c\mathcal{T}\big)p,\\
J_2^-&=&\sum_{i=1}^3N_ip,\\
J_3^-&=&h_1P+pH_1+\frac{c'\mathcal{T}p}{2c }l_1,\\
J_4^-&=&\sum_{i=1}^3 \Big(l_iP+pL_i-\frac{n_i\mathcal{T}p}{4c^2}(c^2-\zeta_i)(1-h_1)
\Big),\\
J_5^-&=&(1-h_1) P-pH_1,\\
\end{eqnarray*}
and
\begin{eqnarray*}
J_0^+&=&\big(\mathcal{X}-c\mathcal{T}\big)qh_2,\\
J_1^+&=&\big(\mathcal{X}-c\mathcal{T}\big)q,\\
J_2^+&=&\sum_{i=1}^3N_iq,\\
J_3^+&=&h_2Q+qH_2+\frac{c'\mathcal{T}q}{2c }m_1,\\
J_4^+&=&\sum_{i=1}^3 \Big(m_iQ+qM_i-\frac{n_i\mathcal{T}q}{4c^2}(c^2-\zeta_i)(1-h_2)
\Big),\\
J_5^+&=&(1-h_2)Q-qH_2.\\
\end{eqnarray*}
Furthermore, it is straightforward to check that the integrands $J_j^\pm$ are all smooth, for $j=0,1,\cdots,5$.

\subsection{Length of piecewise regular paths}\label{sub_piecewise}
Now we define the weighted length of a piecewise regular path.

\begin{Definition}\label{def_piece}
The length $\|\Gamma^t\|$ of the piecewise regular path $\Gamma^t: \lambda\mapsto \big(\mathbf{n}^\lambda(t),\mathbf{n}^\lambda_t(t)\big)$ is defined as
\begin{equation}\label{piecedef}
    \|\Gamma^t\|=\inf_{\Gamma^t}\int_0^1\Big\{\sum_{j=0}^5\kappa_j\int_{\Lambda_t^\lambda}
\Big(|(J_j^-)^\lambda|\mathcal{V}^-dX+|(J_j^+)^\lambda|\mathcal{V}^+dY\Big)\Big\}\,d\lambda,
\end{equation}
where the infimum is taken over all piecewise smooth relabelings of the $X$-$Y$ coordinates and $\Lambda_\tau^\lambda:=\{(X,Y);t^\lambda(X,Y)=\tau\}$.
\end{Definition}

Then we state the main theorem of this section, which implies  the appearance of the generic singularity will not impact the Lipschitz property of this metric.
\begin{Theorem}\label{thm_length}
 Let $T>0$ be given, consider a path of solutions $\lambda\mapsto \big(\mathbf{n}^\lambda(t),\mathbf{n}^\lambda_t(t)\big)$ of \eqref{vwl}, which is piecewise regular for $t\in[0,T]$. Moreover, the total energy is less than some constants $E>0$. Then there exists constants $\kappa_0,\kappa_1,\cdots,\kappa_5$ in \eqref{piecedef} and $C>0$, such that for any $0\leq t\leq T$, it holds
\begin{equation*}\label{pieceest}
\|\Gamma^t\|\leq C\|\Gamma^0\|,
\end{equation*}
where the constant $C$ depends only on $T$ and $E$.
\end{Theorem}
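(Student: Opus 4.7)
\textbf{Proof plan for Theorem \ref{thm_length}.}
The strategy combines the $(X,Y)$-line-integral representation \eqref{normXY} of the length element, the pointwise tangent-vector Grönwall inequality of Lemma \ref{lem_est}, and the regularity structure of piecewise regular paths in Definition \ref{def_repath}. The critical simplification is that on every open subinterval $(\lambda_{i-1},\lambda_i)$, both the solution data $(\mathbf{n}^\lambda,\mathbf{L}^\lambda,\mathbf{m}^\lambda,h_1^\lambda,h_2^\lambda,p^\lambda,q^\lambda,x^\lambda,t^\lambda)$ and the first-order perturbations $(\mathbf{N}^\lambda,\mathcal{L}^\lambda,\mathbf{M}^\lambda,H_1^\lambda,H_2^\lambda,P^\lambda,Q^\lambda,\mathcal{X}^\lambda,\mathcal{T}^\lambda)$ are $C^\infty$ in $(X,Y,\lambda)$, so the integrands $(J_j^\pm)^\lambda$ entering \eqref{normXY} remain smooth in $t$ and $\lambda$ even when $\mathbf{n}^\lambda(t,x)$ develops cusp singularities in the physical variables.

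Fix $\lambda\notin\{\lambda_1,\ldots,\lambda_N\}$ and set
\[
L^\lambda(t):=\sum_{j=0}^5\kappa_j\int_{\Lambda_t^\lambda}\bigl(|(J_j^-)^\lambda|\mathcal{V}^-\,dX+|(J_j^+)^\lambda|\mathcal{V}^+\,dY\bigr).
\]
By the characterization in Definition \ref{def_gensin}, $\mathbf{n}^\lambda(t,x)$ is classically smooth on $[0,T]\times\mathbb{R}$ except along a finite union of smooth cusp curves. On each maximal open time subinterval where $\mathbf{n}^\lambda$ is everywhere smooth, the transformation \eqref{wXchange}--\eqref{1change} identifies $L^\lambda(t)$ with the physical-variable norm \eqref{norm1}, so Lemma \ref{lem_est} applies and yields $\tfrac{d}{dt}L^\lambda(t)\leq a(t)L^\lambda(t)$ with $\int_0^T a(s)\,ds\leq C(T,E)$ thanks to \eqref{4.9}. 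At the finitely many cusp-formation times, the $C^\infty$ dependence of $(J_j^\pm)^\lambda$ on $(X,Y)$ together with the smooth evolution equation for the curve $Y^\lambda(t,X)$ (cf.\ \eqref{xvar0}) forces $t\mapsto L^\lambda(t)$ to be continuous; the differential inequality thus extends to all of $[0,T]$ in the distributional sense, and Grönwall gives $L^\lambda(t)\leq C\,L^\lambda(0)$.

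Finally, since $\{\lambda_1,\ldots,\lambda_N\}$ is a null subset of $[0,1]$, integrating in $\lambda$ preserves the pointwise estimate,
\[
\int_0^1 L^\lambda(t)\,d\lambda\leq C\int_0^1 L^\lambda(0)\,d\lambda,
\]
and taking the infimum over piecewise smooth relabelings of the $X$-$Y$ coordinates on both sides produces the desired inequality $\|\Gamma^t\|\leq C\|\Gamma^0\|$, with $C$ depending only on $T$ and $E$.

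The principal technical obstacle is the continuity step: at a cusp time the physical quantities $\mathbf{R}^\lambda$ or $\mathbf{S}^\lambda$ become unbounded, while in $(X,Y)$-coordinates one has $p^\lambda\to\infty$ and $h_1^\lambda\to 0$ (respectively $q^\lambda\to\infty$ and $h_2^\lambda\to 0$) in such a way that the products $(J_j^\pm)^\lambda$ stay bounded. Making this cancellation quantitative, and upgrading the Grönwall estimate of Lemma \ref{lem_est}, derived under classical smoothness in $(t,x)$, to a distributional inequality that is stable under passing through the singular-time set, is the heart of the argument; once this continuity is in hand, everything else is a routine Grönwall-plus-integration computation.
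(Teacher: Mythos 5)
Your plan has the right ingredients (the $(X,Y)$ line-integral representation \eqref{normXY}, the Gr\"onwall estimate of Lemma \ref{lem_est}, integration in $\lambda$ and an infimum over relabelings), but the central step contains a genuine gap. You propose to apply Lemma \ref{lem_est} on ``each maximal open time subinterval where $\mathbf{n}^\lambda$ is everywhere smooth'' and to treat the singular set as ``finitely many cusp-formation times''. For solutions with generic singularities this is not the actual structure: the gradient blows up along finitely many smooth \emph{curves} in the $(t,x)$ plane, so singularities persist over whole time intervals, and once the first cusp forms there is in general no open time interval on which $\mathbf{n}^\lambda(t,\cdot)$ is smooth for all $x$. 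Consequently the physical-space estimate of Lemma \ref{lem_est}, proved for solutions that are smooth on all of $\mathbb{R}$ for $t\in[0,T]$, is simply not applicable on most of $[0,T]$, and the ``continuity at finitely many times plus distributional extension'' device has nothing to glue together. This is precisely why the paper (which omits the details and refers to the argument of \cite{BC2015}) does not return to the $(t,x)$ variables at all: the differential inequality for $L^\lambda(t)$ must be re-derived directly for the line integral along $\Lambda_t^\lambda$, using the smooth semilinear system \eqref{2.17}--\eqref{2.18} and its linearization in $(\mathbf N,\mathcal L,\mathbf M,H_1,H_2,P,Q,\mathcal X,\mathcal T)$, i.e.\ one proves the analogues of the estimates \eqref{I0est}--\eqref{I5est} for the smooth integrands $J_j^\pm$ in the transformed plane, where the generic conditions of Definition \ref{def_gensin} guarantee that the sets $\{h_1=0\}$, $\{h_2=0\}$ are finitely many smooth curves causing no loss in the inequalities. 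Treating the $(X,Y)$ coordinates only as a bookkeeping device for a continuity argument is not enough.

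A smaller but telling inaccuracy: at a cusp you assert that $p^\lambda\to\infty$ while $h_1^\lambda\to0$, and that a quantitative cancellation between the two must be established. In fact, by Lemma \ref{Lemma 2.1} the quantities $p^\lambda,q^\lambda$ remain smooth, finite and strictly positive in the $(X,Y)$ variables; blowup of $\mathbf R^\lambda$ (resp.\ $\mathbf S^\lambda$) is encoded solely by $h_1^\lambda\to0$ (resp.\ $h_2^\lambda\to0$). So there is no delicate $p$--$h_1$ cancellation to quantify: the whole point of the transformed variables is that every term in \eqref{normXY} is already smooth across the singular curves, and the real work, which your proposal leaves untouched, is to verify that the time-derivative estimates behind Lemma \ref{lem_est} carry over verbatim as estimates on the $J_j^\pm$ along $\Lambda_t^\lambda$, including on the curves where $h_1$ or $h_2$ vanishes.
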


The proof of this lemma is similar to \cite{BC2015}, and we omit it here for brevity.

\section{Construction of the geodesic distance for general weak solutions}
Our final goal is to construct a geodesic distance, under which the general weak solutions obtained in Theorem \ref{CZZthm}--\ref{ECthm} is Lipschitz continuous.

The main idea is to extend the metric from generic piecewise smooth solution in Theorem \ref{thm_length} to general weak solution by taking limit from generic solutions.
To this end, we would like to point out  the generic regularity theorem in \cite{CCD}, that is, there exists an open dense set of initial data $\mathcal{D}\subset \Big(\mathcal{C}^3(\mathbb{R})\cap H^1(\mathbb{R})\big)\times \big(\mathcal{C}^2(\mathbb{R})\cap L^2(\mathbb{R})\big)$, such that, for $(n_{i0},n_{i1})\in\mathcal{D}$, the conservative solution $\mathbf{n}=(n_1,n_2,n_3)$ of \eqref{vwl} has only generic singularities. The structure of conservative solution  thus provides the ideal tool to construct a distance on a set $$\mathcal{D}^\infty:=\mathcal{C}_0^\infty\cap\mathcal{D},$$  by optimizing over all piecewise regular paths connecting two solutions of \eqref{vwl}. As a consequence, we extend our distance from space $\mathcal{D}^\infty$ to a larger domain by using the semilinear system \eqref{2.17}--\eqref{2.18} and Theorem \ref{thm_length},

We first introduce some definitions. For future use, we begin by introducing the subset of all data with energy  less than any fix constant $E>0$, specifically,
\begin{equation*}
\Omega:=\{(n_i,n_{it})\in H^1(\mathbb{R})\times L^{2}(\mathbb{R}); ~\mathcal{E}(\mathbf{n},\mathbf{n}_t):=\int_\mathbb{R}[\mathbf{n}_t^2+c(n_1)
\mathbf{n}_x^2]\,dx
\leq E\}.
\end{equation*}

\begin{Definition}\label{def_piecepath}
For solutions with initial data in $\mathcal{D}^\infty\cap~\Omega$, we define the geodesic distance $d\big((\mathbf{n},\mathbf{n}_t),$ $ (\hat{\mathbf{n}},\hat{\mathbf{n}}_t)\big)$ as
\begin{equation*}
\begin{split}
d\big((\mathbf{n},\mathbf{n}_t),(\hat{\mathbf{n}},\hat{\mathbf{n}}_t)\big):=\inf \{ \|\Gamma^t\|: &\Gamma^t \text{ is a piecewise regular path}, \Gamma^t(0)=(\mathbf{n},\mathbf{n}_t),\\
& \Gamma^t(1)=(\hat{\mathbf{n}},\hat{\mathbf{n}}_t), \mathcal{E}(\mathbf{n}^\lambda,\mathbf{n}_t^\lambda)\leq E, \text{ for all } \lambda\in[0,1]\},
\end{split}\end{equation*}
for any time $t$, where the infimum is taken over all weighted lengths of piecewise regular paths $\lambda\mapsto (\mathbf{n}^\lambda,\mathbf{n}_t^\lambda)$, which connect $(\mathbf{n},\mathbf{n}_t)$ with $(\hat{\mathbf{n}},\hat{\mathbf{n}}_t)$.
\end{Definition}

The definition $J(\cdot,\cdot)$ actually defines a distance because  after a suitable re-parameterization, the concatenation of two piecewise regular paths is still a piecewise regular path. With this definition of distance, the distance for general weak solutions is defined as follows.

\begin{Definition}\label{def_weak}
Let $(\mathbf{n}_0,\mathbf{n}_1)$ and $(\hat{\mathbf{n}}_0,\hat{\mathbf{n}}_1)$ be two initial data as required in the existence and uniqueness Theorem \ref{CZZthm} and Theorem \ref{ECthm}. Denote $\mathbf{n}$ and $\hat{\mathbf{n}}$ to be the corresponding global weak solutions, then for any time $t$, we define,
\begin{equation*}
d\big((\mathbf{n},\mathbf{n}_t),(\hat{\mathbf{n}},\hat{\mathbf{n}}_t)\big):=\lim_{k\rightarrow \infty}d\big((\mathbf{n}^k,\mathbf{n}_t^k),(\hat{\mathbf{n}}^k,\hat{\mathbf{n}}_t^k)\big),
\end{equation*}
for any two sequences of solutions $(\mathbf{n}^k,\mathbf{n}_t^k)$ and $(\hat{\mathbf{n}}^k,\hat{\mathbf{n}}_t^k)$ with the corresponding initial data in $\mathcal{D}^\infty \cap \Omega$, moreover for $i=1,2,3$,
\[
\|(n^k_{i0}-n_{i0}, \hat{n}^k_{i0}-\hat{n}_{i0})\|_{H^1}\rightarrow 0,\quad\hbox{and}\quad
\|(n^k_{i1}-n_{i1}, \hat{n}^k_{i1}-\hat{n}_{i1})\|_{L^2}\rightarrow 0.
\]
\end{Definition}

Thanks to the analysis above, we now have all ingredients toward a proof of main theorem: Theorem \ref{thm_metric}.
First, we claim that the definition of this metric is well-defined. In fact, since the solution with initial data in $\mathcal{D}^\infty\cap \Omega$ is Lipschitz continuous, we thus derive that the limit in the definition \ref{def_weak} is independent on the selection of sequences. On the other hand, by the fact that $\mathcal{D}^\infty\cap \Omega$ is a dense set in the solution space,
one can easily extend the Lipschitz metric to the general solutions.
As a consequence of Theorem \ref{thm_length}, we deduce directly the result of Theorem \ref{thm_metric}.


\vskip 0.2cm
We end this section with the relations between our distance function $d$ with other distances
determined by various norms.

\begin{Proposition}[Comparison with the Sobolev metric]\label{Prop_sob}
For any two finite energy initial data $(\mathbf{n}_0,$ $\mathbf{n}_1)$ and $(\hat{\mathbf{n}}_0, \hat{\mathbf{n}}_1) \in \mathcal D^\infty$, one has
\begin{equation*}
d\big((\mathbf{n}_0,\mathbf{n}_1),(\hat{\mathbf{n}}_0, \hat{\mathbf{n}}_1)\big)\leq C\sum_{i=1}^3\big(\|n_{i0}-\hat{n}_{i0}\|_{H^1}+\|n_{i0}-\hat{n}_{i0}\|_{W^{1,1}}+
\|n_{i1}-\hat{n}_{i1}\|_{L^1}+\|n_{i1}-\hat{n}_{i1}\|_{L^2}\big),
\end{equation*}
with a constant $C$ depends only on the initial energy.
\end{Proposition}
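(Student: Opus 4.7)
The plan is to exhibit a single explicit piecewise regular path $\Gamma^0:\lambda\mapsto(\mathbf{n}_0^\lambda,\mathbf{n}_1^\lambda)$ at time $t=0$ joining $(\mathbf{n}_0,\mathbf{n}_1)$ to $(\hat{\mathbf{n}}_0,\hat{\mathbf{n}}_1)$, to estimate $\|\Gamma^0\|$ directly from the norm \eqref{norm1}, and then to invoke Definition \ref{def_piecepath}, which yields $d((\mathbf{n}_0,\mathbf{n}_1),(\hat{\mathbf{n}}_0,\hat{\mathbf{n}}_1))\leq \|\Gamma^0\|$. Because the proposition concerns the initial time only, no Gr\"onwall-in-time estimate from Lemma \ref{lem_est} or Theorem \ref{thm_length} will be needed; the entire work is to build a path whose initial tangent is sharply controlled by the right-hand side.

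For the path, naive linear interpolation would violate $|\mathbf{n}|=1$, so I use radial projection onto the sphere: set $\tilde{\mathbf{n}}_0^\lambda:=(1-\lambda)\mathbf{n}_0+\lambda\hat{\mathbf{n}}_0$ and $\mathbf{n}_0^\lambda:=\tilde{\mathbf{n}}_0^\lambda/|\tilde{\mathbf{n}}_0^\lambda|$; if antipodal points occur where $|\tilde{\mathbf{n}}_0^\lambda|$ would vanish, I subdivide $[0,1]$ via finitely many auxiliary smooth, compactly supported profiles and apply the triangle inequality. For the velocity I take $\mathbf{n}_1^\lambda:=\tilde{\mathbf{n}}_1^\lambda-(\tilde{\mathbf{n}}_1^\lambda\cdot\mathbf{n}_0^\lambda)\mathbf{n}_0^\lambda$ with $\tilde{\mathbf{n}}_1^\lambda:=(1-\lambda)\mathbf{n}_1+\lambda\hat{\mathbf{n}}_1$, so that $\mathbf{n}_0^\lambda\cdot\mathbf{n}_1^\lambda=0$ and the constraint is preserved infinitesimally in time. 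Density of $\mathcal{D}^\infty$ in the finite-energy class permits, after an arbitrarily small further perturbation, the path to be made piecewise regular in the sense of Definition \ref{def_repath}.

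In the weighted norm \eqref{norm1} I set $w\equiv z\equiv 0$; then $\mathbf{r}^*=\mathbf{r}$, $\mathbf{s}^*=\mathbf{s}$, and $I_0=I_1=I_3=0$. The $\lambda$-tangent $\mathbf{v}^\lambda:=d\mathbf{n}_0^\lambda/d\lambda$ satisfies pointwise
\[
|\mathbf{v}^\lambda|\lesssim|\hat{\mathbf{n}}_0-\mathbf{n}_0|,\qquad |\mathbf{v}_x^\lambda|\lesssim|\hat{\mathbf{n}}_0'-\mathbf{n}_0'|+|\hat{\mathbf{n}}_0-\mathbf{n}_0|\bigl(|\mathbf{n}_0'|+|\hat{\mathbf{n}}_0'|\bigr),
\]
with an analogous bound for $d\mathbf{n}_1^\lambda/d\lambda$. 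Differentiating \eqref{R-S} in $\lambda$ expresses $\mathbf{r}^\lambda,\mathbf{s}^\lambda$ as linear combinations of $d\mathbf{n}_1^\lambda/d\lambda$, $c(n_1^\lambda)\mathbf{v}_x^\lambda$ and lower-order terms. Using $\mathcal{V}^\pm\leq 1+E$, $\|\mathbf{R}\|_{L^2}^2+\|\mathbf{S}\|_{L^2}^2\leq CE$, $\|\mathbf{n}_{0,x}^\lambda\|_{L^2}\leq CE^{1/2}$, and the 1D Sobolev embeddings $H^1\hookrightarrow L^\infty$ and $W^{1,1}\hookrightarrow L^\infty$, one bounds
\[
I_2\lesssim \|\mathbf{v}^\lambda\|_{L^1}+E\,\|\mathbf{v}^\lambda\|_{L^\infty},\qquad I_4\lesssim \|\mathbf{r}^\lambda\|_{L^1}+\|\mathbf{s}^\lambda\|_{L^1},\qquad I_5\lesssim E^{1/2}\bigl(\|\mathbf{r}^\lambda\|_{L^2}+\|\mathbf{s}^\lambda\|_{L^2}\bigr).
\]
Substituting the tangent vector estimates and applying H\"older on products of the form $|\mathbf{v}^\lambda|\,|\mathbf{n}_0'|$ (which is controlled by $\|\mathbf{v}^\lambda\|_{L^2}\|\mathbf{n}_0'\|_{L^2}\leq E^{1/2}\|\hat{\mathbf{n}}_0-\mathbf{n}_0\|_{H^1}$) reduces each $I_j$ to the sum appearing on the right-hand side of the proposition; integrating in $\lambda\in[0,1]$ yields $\|\Gamma^0\|\leq C\cdot(\text{RHS})$.

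The main obstacle is the simultaneous management of the two constraints on the path: preserving $|\mathbf{n}^\lambda|=1$ (which forces the radial projection and the tangential correction of $\mathbf{n}_1^\lambda$, and occasional subdivision through auxiliary data) \emph{and} achieving piecewise regularity so that the path is admissible in Definition \ref{def_piecepath}. The second is handled by density of $\mathcal{D}^\infty$ together with a routine general-position argument in the $(X,Y)$ coordinates of Section \ref{sec:4.1}; the first is handled by the explicit projection construction above, which produces tangent vectors whose size is pointwise comparable to the difference of the two data. Once these two points are settled, the rest is the bookkeeping outlined in the previous paragraph.
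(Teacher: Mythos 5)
Your overall skeleton (take $w=z\equiv0$ in \eqref{norm1} so that only $I_2,I_4,I_5$ survive, build one explicit path of data, bound its tangent pointwise by the differences of the data, integrate in $\lambda$, and never invoke the Gr\"onwall estimate) is exactly the paper's, but your path is not, and this is where the gap lies. You interpolate the director field itself and project radially onto the sphere, $\mathbf{n}_0^\lambda=\tilde{\mathbf{n}}_0^\lambda/|\tilde{\mathbf{n}}_0^\lambda|$. All of your key pointwise bounds ($|\mathbf{v}^\lambda|\lesssim|\hat{\mathbf{n}}_0-\mathbf{n}_0|$, the bound on $\mathbf{v}^\lambda_x$, and through them the $L^1$/$L^2$ bounds on $\mathbf{r}^\lambda,\mathbf{s}^\lambda$) carry hidden factors of $|\tilde{\mathbf{n}}_0^\lambda|^{-1}$ and $|\tilde{\mathbf{n}}_0^\lambda|^{-2}$, so they fail precisely where the two data are close to antipodal, and in that regime the energy $\mathcal{E}(\mathbf{n}^\lambda,\mathbf{n}^\lambda_t)$ of the interpolant can also exceed the bound $E$ required for admissibility in Definition \ref{def_piecepath}. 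Your proposed repair (``subdivide through finitely many auxiliary profiles and use the triangle inequality'') is not quantified: to conclude the stated inequality you would need (i) the observation that near-antipodality forces $\|\mathbf{n}_0-\hat{\mathbf{n}}_0\|_{L^\infty}\gtrsim 1$, hence via $H^1\hookrightarrow L^\infty$ the right-hand side is bounded below, and (ii) a diameter-type bound showing any two admissible data of energy $\le E$ are joined by an admissible path of length $\le C(E)$. Neither is supplied, and (ii) is essentially the nontrivial content you would have to prove; as written the argument does not close.

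The paper avoids this entirely by interpolating linearly in the Riemann-type variables of \eqref{R-S}: $\mathbf{R}^\lambda=\lambda\mathbf{R}+(1-\lambda)\hat{\mathbf{R}}$, $\mathbf{S}^\lambda=\lambda\mathbf{S}+(1-\lambda)\hat{\mathbf{S}}$, recovering $\mathbf{n}^\lambda$ from \eqref{u_rec}. This has two payoffs your construction lacks: the tangent vectors are constant in $\lambda$, $\mathbf{r}^\lambda=\mathbf{R}-\hat{\mathbf{R}}$ and $\mathbf{s}^\lambda=\mathbf{S}-\hat{\mathbf{S}}$, so $I_4,I_5$ are controlled immediately without product-rule terms, and the energy along the path is bounded by the endpoint energies by convexity, so admissibility never degenerates and no case distinction (antipodal or not) is needed; $\mathbf{v}^\lambda$ is then bounded through the ODE \eqref{vx} by $\|\mathbf{R}-\hat{\mathbf{R}}\|_{L^1}+\|\mathbf{S}-\hat{\mathbf{S}}\|_{L^1}$, which converts directly into the $W^{1,1},L^1,H^1,L^2$ norms in the statement. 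If you want to keep your sphere-valued interpolation you must either prove the missing diameter bound or restrict to data pairs that are uniformly non-antipodal; the cleaner route is to switch to the $(\mathbf{R},\mathbf{S})$ interpolation. (Your bookkeeping for $I_2,I_4,I_5$ away from the degeneracy, using $\mathcal{V}^\pm\le 1+E$ and the Sobolev embedding, is otherwise in order, and matches the reduction \eqref{norm_rec}.)
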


\begin{proof}
In order to get an upper bound for this optimal transport metric, a natural choice is letting the shifts $w=z=0$ in \eqref{norm1}, in this way, the norm becomes
\begin{equation}
\label{norm_rec}
\begin{split}
&\quad\|(\mathbf{v}^\lambda, (\mathbf{r}^*)^\lambda, (\mathbf{s}^*)^\lambda, w^\lambda, z^\lambda)\|_{(\mathbf{n}^\lambda,\mathbf{R}^\lambda,\mathbf{S}^\lambda)}\\
&=\kappa_2\sum_{i=1}^3\int_\mathbb{R}  \big|v_i^\lambda\big|\big[(1+(\mathbf{R}^\lambda)^2)\, (\mathcal{V}^-)^\lambda+(1+(\mathbf{S}^\lambda)^2)\, (\mathcal{V}^+)^\lambda\big]\,dx\\
&\quad+\kappa_3\sum_{i=1}^3\int_\mathbb{R} \big[|r_i^\lambda|\, (\mathcal{V}^-)^\lambda
+|s_i^\lambda|\, (\mathcal{V}^+)^\lambda\big]\,dx \\
&\quad+\kappa_6\int_\mathbb{R} \Big[ \big|2\mathbf{R}^\lambda\cdot\mathbf{r}^\lambda\big|\, (\mathcal{V}^-)^\lambda+
\big|2\mathbf{S}^\lambda\cdot\mathbf{s}^\lambda\big|\, (\mathcal{V}^+)^\lambda\Big]\,dx.
\end{split}
\end{equation}
For $\lambda\in[0,1]$, consider the path $(\mathbf{n}^\lambda_0,\mathbf{n}^\lambda_1)$ connecting  $(\mathbf{n}_0,\mathbf{n}_1)$ and $(\hat{\mathbf{n}}_0, \hat{\mathbf{n}}_1)$, which satisfies
\begin{equation*}
\mathbf{R}^\lambda=\lambda \mathbf{R}+(1-\lambda)\hat{\mathbf{R}},\qquad
\mathbf{S}^\lambda=\lambda \mathbf{S}+(1-\lambda)\hat{\mathbf{S}}.
\end{equation*}
Indeed, by \eqref{u_rec}, one can easily verify that above equations recover a unique path $(\mathbf{n}^\lambda, \mathbf{n}^\lambda_t)$.
Moreover the energy $\int_\mathbb{R} (\mathbf{R}^\lambda)^2+(\mathbf{S}^\lambda)^2\,dx$ is bounded by the energies of $(\mathbf{n}_0,\mathbf{n}_1)$ and $(\hat{\mathbf{n}}_0, \hat{\mathbf{n}}_1)$.

To estimate the right hand side of \eqref{norm_rec}, we first observe that
\beq\label{r_rec}
\mathbf{r}^\lambda=\frac{d}{d\lambda} \mathbf{R}^\lambda=\mathbf{R}-\hat{\mathbf{R}}
,\quad {\rm and} \quad
\mathbf{s}^\lambda=\frac{d}{d\lambda} \mathbf{S}^\lambda=\mathbf{S}-\hat{\mathbf{S}}.
\eeq
Next,  from the definition of $\mathbf{R}, \mathbf{S}$ at \eqref{R-S}, it follows
\begin{equation}\label{u_rec}
\mathbf{n}^\lambda_x=\frac{\mathbf{R}^\lambda-\mathbf{S}^\lambda}{2c(n^\lambda_1)}.
\end{equation}
Since the right hand side is Lipschitz on $\mathbf{n}^\lambda$ and $\mathbf{n}^\lambda$ has compact support, one can easily prove the existence and uniqueness of $\mathbf{n}^\lambda(x)$. So $\mathbf{v}^\lambda=\frac{d}{d\lambda} \mathbf{n}^\lambda$ satisfies
\[
\mathbf{v}^\lambda_x=\frac{\mathbf{r}^\lambda-\mathbf{s}^\lambda}{2c(n_1^\lambda)}
-\frac{\mathbf{R}^\lambda-\mathbf{S}^\lambda}{2c^2(n_1^\lambda)}c'(n_1^\lambda)v^\lambda_1,
\]
whence, by virtue of \eqref{r_rec} and after a straightforward manipulation,
 we arrive at  estimate for $\mathbf{v}^\lambda$:
\beq\label{v_rec}
|v_i^\lambda|\leq K\, \sum_{i=1}^3(\| R_i-\hat {R}_i\|_{L^1}+\| S_i-\hat{S_i}\|_{L^1})
\eeq
for some constant $K$ and $i=1,2,3$.
Substituting \eqref{r_rec}--\eqref{v_rec} into \eqref{norm_rec}, we get the desired conclusion of Proposition \ref{Prop_sob} directly.
\end{proof}

Actually, thanks to our main Theorem \ref{thm_metric}, this proposition also tells that, for any $t\geq0$,
\begin{equation*}
d\big((\mathbf{n},\mathbf{n}_t)(t),(\hat{\mathbf{n}}, \hat{\mathbf{n}}_t)(t)\big)\leq C\sum_{i=1}^3\big(\|n_{i0}-\hat{n}_{i0}\|_{H^1}+\|n_{i0}-\hat{n}_{i0}\|_{W^{1,1}}+
\|n_{i1}-\hat{n}_{i1}\|_{L^1}+\|n_{i1}-\hat{n}_{i1}\|_{L^2}\big).
\end{equation*}


\begin{Proposition}[Comparison with $L^1$ metric]
\label{Prop_L1}
Let $\mathbf{n}(t),$ $\hat{\mathbf{n}}(t)$ be conservative solutions obtained in
 Theorem \ref{CZZthm} and Theorem \ref{ECthm} with initial data $n_{i0},\hat{n}_{i0}\in H^1(\mathbb{R})\cap L^1(\mathbb{R})$ and $n_{i1},\hat{n}_{i1}\in L^2(\mathbb{R})$, $i=1,2,3$, there exists some constant $C$ depends only on the upper bound for the total energy, such that,
\begin{equation*}\label{L1}
\sum_{i=1}^3\|n_i-\hat{n}_i\|_{L^1}\leq C\cdot d\Big((\mathbf{n},\mathbf{n}_t)(t),(\hat{\mathbf{n}}, \hat{\mathbf{n}}_t)(t)\Big).
\end{equation*}
\end{Proposition}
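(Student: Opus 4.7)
The goal is to prove the estimate first along any piecewise regular path between initial data in $\mathcal{D}^\infty\cap\Omega$, and then extend by density to general weak solutions via Definition \ref{def_weak}. Fix such a path $\Gamma^t:\lambda\mapsto(\mathbf{n}^\lambda,\mathbf{n}_t^\lambda)(t)$ with $\Gamma^t(0)=(\mathbf{n},\mathbf{n}_t)(t)$ and $\Gamma^t(1)=(\hat{\mathbf{n}},\hat{\mathbf{n}}_t)(t)$. By Definition \ref{def_repath}, the map $\lambda\mapsto\mathbf{n}^\lambda(t,x)$ is globally continuous (inherited from the continuous dependence in $(X,Y,\lambda)$ of the solution to the semilinear system \eqref{2.17}--\eqref{2.18}) and $\mathcal{C}^\infty$ on each piece $(\lambda_{j-1},\lambda_j)$. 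Applying the fundamental theorem of calculus on each smooth piece and telescoping across the finitely many jump values yields
\[
\hat n_i(t,x)-n_i(t,x)\,=\,\int_0^1 v_i^\lambda(t,x)\,d\lambda,\qquad v_i^\lambda:=\frac{\partial n_i^\lambda}{\partial\lambda},
\]
so that $\|\hat n_i(t)-n_i(t)\|_{L^1}\le\int_0^1\|v_i^\lambda(t)\|_{L^1}\,d\lambda$.

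The key step is to control $\|v_i^\lambda(t)\|_{L^1}$ by the tangent norm \eqref{Finsler v}. For any admissible shifts $(w^\lambda,z^\lambda)$ satisfying \eqref{wz} with the induced $(\mathbf r^{*,\lambda},\mathbf s^{*,\lambda})$ from \eqref{rseq}, split pointwise
\[
|v_i^\lambda|\,\le\,\Big|v_i^\lambda+\frac{R_i^\lambda w^\lambda-S_i^\lambda z^\lambda}{2c(n_1^\lambda)}\Big|+\frac{|R_i^\lambda|\,|w^\lambda|+|S_i^\lambda|\,|z^\lambda|}{2c(n_1^\lambda)}.
\]
Integrated against $dx$, the first summand is dominated by $I_2^\lambda/(2\kappa_2)$, since $\mathcal V^\pm\ge1$ forces $(1+\mathbf R^2)\mathcal V^-+(1+\mathbf S^2)\mathcal V^+\ge2$. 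For the second summand, the elementary inequalities $|R_i|\le(1+\mathbf R^2)/2$, $|S_i|\le(1+\mathbf S^2)/2$ together with the uniform lower bound $c\ge c_0>0$ coming from \eqref{c-def} and \eqref{gencon} give
\[
\int_{\mathbb R}\frac{|R_i|\,|w|+|S_i|\,|z|}{2c(n_1)}\,dx\,\le\,\frac{1}{4c_0\kappa_1}I_1^\lambda.
\]
Summing over $i=1,2,3$, integrating in $\lambda$, taking the infimum in $(w^\lambda,z^\lambda)$ on each slice via \eqref{Finsler v}, and then the infimum over all piecewise regular paths using Definition \ref{def_piecepath}, one obtains
\[
\sum_{i=1}^3\|\hat n_i(t)-n_i(t)\|_{L^1}\,\le\,C\,d\big((\mathbf{n},\mathbf{n}_t)(t),(\hat{\mathbf{n}},\hat{\mathbf{n}}_t)(t)\big)
\]
for data in $\mathcal D^\infty\cap\Omega$.

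Finally, for general initial data with $n_{i0},\hat n_{i0}\in H^1\cap L^1$ and $n_{i1},\hat n_{i1}\in L^2$, select approximating sequences $(\mathbf n_0^k,\mathbf n_1^k),(\hat{\mathbf n}_0^k,\hat{\mathbf n}_1^k)\in\mathcal D^\infty\cap\Omega$ satisfying both the $H^1\times L^2$ convergence required by Definition \ref{def_weak} and, in addition, $\|n_{i0}^k-n_{i0}\|_{L^1}\to 0$, $\|\hat n_{i0}^k-\hat n_{i0}\|_{L^1}\to 0$; such sequences exist since $\mathcal C_0^\infty$ is dense in $H^1\cap L^1$. The right-hand side converges to $d\big((\mathbf n,\mathbf n_t)(t),(\hat{\mathbf n},\hat{\mathbf n}_t)(t)\big)$ by Definition \ref{def_weak}. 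The chief obstacle is passing the left-hand side to the limit: since $|\mathbf n|\equiv1$ rules out $n_i\in L^1$ individually, one cannot rely on a direct Sobolev embedding and must instead combine a.e. convergence along a subsequence (extracted from the $L^p_{\rm loc}$ continuity with $1\le p<2$ in Theorem \ref{CZZthm}) with Fatou's lemma, together with the $L^1$ closeness of the initial data and finite propagation speed $c\ge c_0$ (ensuring that $L^1$-control of the differences propagates on $[0,T]$). Combining these ingredients transfers the inequality to the limit and yields the claim.
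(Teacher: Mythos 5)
Your argument is essentially the paper's own: the same pointwise splitting $|v_i|\le\bigl|v_i+\frac{R_iw-S_iz}{2c}\bigr|+\frac{|R_i||w|+|S_i||z|}{2c}$ with $|R_i|\le\frac{1}{2}(1+\mathbf{R}^2)$, $|S_i|\le\frac{1}{2}(1+\mathbf{S}^2)$, $c\ge c_0$ and $\mathcal{V}^\pm\ge1$, so that $\sum_i\int|v_i^\lambda|\,dx$ is dominated by the $I_1$ and $I_2$ terms of the norm \eqref{norm1}, followed by integration in $\lambda$ and the infimum over paths. The only difference is that you spell out the passage from data in $\mathcal{D}^\infty\cap\Omega$ to general data (and, minor point, the lower bound $c\ge c_0$ follows from $\alpha,\gamma>0$ and $|n_1|\le1$, not from \eqref{gencon}), whereas the paper leaves that limit implicit in Definition \ref{def_weak}; the substance of the proof is the same.
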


\begin{proof} Suppose that $\Gamma^t:\lambda\mapsto \big(\mathbf{n}^\lambda(t),\mathbf{n}^\lambda_t(t)\big)$ is a regular path connecting $\mathbf{n}(t)$ with $\hat{\mathbf{n}}(t)$. It is obvious that
\begin{equation}\label{L11}
\begin{split}
|v_i|&=\Big|v_i+\frac{R_iw-S_iz}{2c}-\frac{R_iw-S_iz}{2c}\Big|\leq \Big|v_i+\frac{R_iw-S_iz}{2c}\Big|+\Big|\frac{R_iw}{2c}\Big|
+\Big|\frac{-S_iz}{2c}\Big|\\
&\leq \Big|v_i+\frac{R_iw-S_iz}{2c}\Big|+\frac{|w|(1+\mathbf{R}^2)}{4c}
+\frac{|z|(1+\mathbf{S}^2)}{4c}.
\end{split}\end{equation}
Recalling the definition \ref{def_weak}, we conclude from \eqref{norm1} and \eqref{L11} that
\begin{equation*}
\begin{split}
d\big((\mathbf{n},\mathbf{n}_t)(t),(\hat{\mathbf{n}}, \hat{\mathbf{n}}_t)(t)\big)&\geq C\inf_{\Gamma^t}\int_0^1\sum_{i=1}^3\int_{\mathbb{R}} |v_i^\lambda|\,dx\,d\lambda\\
&=C\inf_{\Gamma^t}\int_0^1\sum_{i=1}^3\int_{\mathbb{R}} \big|\frac{d n_i^\lambda}{d\lambda}\big|\,dx\,d\lambda\\
&\geq C\sum_{i=1}^3\|n_i-\hat{n}_i\|_{L^1}.
\end{split}
\end{equation*}
The proof of Proposition \ref{Prop_L1} is complete.
\end{proof}

\begin{Proposition}[Comparison with the Kantorovich-Rubinstein metric]\label{Prop_KR}
Let $\mathbf{n}(t),$ $\hat{\mathbf{n}}(t)$ be conservative solutions obtained in
 Theorem \ref{CZZthm} and Theorem \ref{ECthm} with initial data $n_{i0},\hat{n}_{i0}\in H^1(\mathbb{R})$ and $n_{i1},\hat{n}_{i1}\in L^2(\mathbb{R})$, $i=1,2,3$, there exists some constant $C$ depends only on the upper bound for the total energy, such that,
\begin{equation}\label{KR}
\sup_{\|f\|_{\mathcal{C}^1}\leq 1}\left|\int f \,d\mu-\int f\,d\hat{\mu}\right|\leq C \cdot d\Big((\mathbf{n},\mathbf{n}_t)(t),(\hat{\mathbf{n}}, \hat{\mathbf{n}}_t)(t)\Big),
\end{equation}
where $\mu,\hat{\mu}$ are the measures with densities $ \mathbf{n}_t^2+c(n_1) \mathbf{n}_x^2$ and $\hat{\mathbf{n}}_t^2+c(\hat{n}_1) \hat{\mathbf{n}}_x^2$ with respect to the Lebesgue measure.
The metric (\ref{KR}) is usually called a Kantorovich-Rubinstein distance, which is equivalent to a Wasserstein distance by a duality theorem \cite{V}.
\end{Proposition}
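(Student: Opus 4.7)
The plan is analogous to the proofs of Propositions \ref{Prop_sob} and \ref{Prop_L1}: bound $|\int f\,d\mu-\int f\,d\hat\mu|$ along an arbitrary piecewise regular path $\Gamma^t\colon\lambda\mapsto(\mathbf{n}^\lambda(t),\mathbf{n}_t^\lambda(t))$ joining the two solutions at time $t$, and then take the infimum over such paths. By the approximation scheme in Definition \ref{def_weak} combined with continuity of both sides of \eqref{KR} under $H^1\times L^2$ convergence of initial data (the density $\mathbf{n}_t^2 + c^2(n_1)\mathbf{n}_x^2 = \tfrac12(\mathbf{R}^2+\mathbf{S}^2)$ as intended in the statement is continuous in $L^1$ under such convergence), it suffices to treat data in $\mathcal{D}^\infty\cap\Omega$.

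Differentiating $\int f\,d\mu^\lambda = \tfrac12\int f\bigl((\mathbf{R}^\lambda)^2+(\mathbf{S}^\lambda)^2\bigr)\,dx$ in $\lambda$ gives
\[
\int f\,d\mu - \int f\,d\hat\mu \;=\; \int_0^1\int_\mathbb{R} f\bigl(\mathbf{R}^\lambda\cdot\mathbf{r}^\lambda+\mathbf{S}^\lambda\cdot\mathbf{s}^\lambda\bigr)\,dx\,d\lambda.
\]
For any admissible shifts $(w,z)$ and associated $(\mathbf{r}^*,\mathbf{s}^*)$ from \eqref{rseq}, the unit-norm constraint $|\mathbf{n}|\equiv 1$ gives $\mathbf{R}\cdot\mathbf{n} = 0$, so contracting \eqref{rseq} with $\mathbf{R}$ annihilates the $n_i$-weighted bracket and leaves
\[
\mathbf{R}\cdot\mathbf{r} \;=\; \mathbf{R}\cdot\mathbf{r}^* - w\,\mathbf{R}\cdot\mathbf{R}_x + \tfrac{c'}{4c^2}(w-z)\,R_1(\mathbf{R}\cdot\mathbf{S}).
\]
Integrating by parts through $-\int fw\,\mathbf{R}\cdot\mathbf{R}_x\,dx=\tfrac12\int(f_x w + f w_x)\mathbf{R}^2\,dx$, and then adding and subtracting $\tfrac{c'}{4c^2}(w-z)\mathbf{R}^2 S_1$ to complete the integrand of $J_5^-$, I obtain
\[
2\int f\,\mathbf{R}\cdot\mathbf{r}\,dx = \int f\Bigl[2\mathbf{R}\cdot\mathbf{r}^* + \mathbf{R}^2 w_x + \tfrac{c'}{4c^2}(w-z)\mathbf{R}^2 S_1\Bigr] dx + \int f_x\,w\,\mathbf{R}^2\,dx + \mathfrak{R}_-,
\]
with $\mathfrak{R}_- = \int\tfrac{c'}{4c^2}(w-z)\,f\,[2R_1\mathbf{R}\cdot\mathbf{S} - \mathbf{R}^2 S_1]\,dx$. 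Since $\mathcal{V}^-\ge 1$, the first integrand is $\pm f J_5^-$, hence bounded by $\|f\|_\infty\,I_5/\kappa_5$; the second by $\|f'\|_\infty\,I_1/\kappa_1$; and the forward $\mathbf{S}\cdot\mathbf{s}$ piece is treated symmetrically with $J_5^+$, $I_1$ and $\mathcal{V}^+$.

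The decisive technical obstacle is the cubic remainder $\mathfrak{R}_-$, whose integrand satisfies $|2R_1\mathbf{R}\cdot\mathbf{S}-\mathbf{R}^2 S_1|\le C\mathbf{R}^2|\mathbf{S}| \le \tfrac{C}{2}\mathbf{R}^2(1+\mathbf{S}^2)$. The linear-in-$\mathbf{S}^2$ part is immediately absorbed by $I_1$ via the uniform bound $\mathcal{V}^\pm\le 1+E$ coming from the energy cap $\int\mathbf{S}^2\,dx\le E$; the genuinely cubic crossing $\int(|w|+|z|)\mathbf{R}^2\mathbf{S}^2\,dx$ is closed by the same interaction-absorption mechanism that drives the time-derivative estimate \eqref{sum} in Lemma \ref{lem_est}: writing $\mathbf{S}^2=(\mathcal{V}^-)_x$ and integrating by parts redistributes the cubic cost onto $w_x$ and $\mathbf{R}\cdot\mathbf{R}_x$ weighted by $\mathcal{V}^-$, which in turn are controlled by the $I_3$ and $I_5$ contributions of the Finsler norm. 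Once the pointwise-in-$\lambda$ bound
\[
\Bigl|\tfrac{d}{d\lambda}\int f\,d\mu^\lambda\Bigr| \;\le\; C(E)\,\|f\|_{\mathcal{C}^1}\,\|(\mathbf{v}^\lambda,\mathbf{r}^{*\lambda},\mathbf{s}^{*\lambda},w^\lambda,z^\lambda)\|_{(\mathbf{n}^\lambda,\mathbf{R}^\lambda,\mathbf{S}^\lambda)}
\]
is secured, integrating in $\lambda$ and passing to the infimum first over admissible shifts (as in \eqref{Finsler v}) and then over piecewise regular paths (Definition \ref{def_piecepath}) delivers \eqref{KR}.
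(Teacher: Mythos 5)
Your overall route is the paper's own: along a piecewise regular path you differentiate $\int f\,d\mu^\lambda=\tfrac12\int f\bigl((\mathbf{R}^\lambda)^2+(\mathbf{S}^\lambda)^2\bigr)dx$ in $\lambda$, integrate by parts so that the $f_x$-contribution is paid by the $I_1$-part of \eqref{norm1}, use \eqref{rseq} together with $\mathbf{R}\cdot\mathbf{n}=\mathbf{S}\cdot\mathbf{n}=0$ to recognize the $J_5^\pm$ integrands (paid by $I_5$), and then integrate in $\lambda$ and take the infima over shifts and paths; the reduction to data in $\mathcal{D}^\infty\cap\Omega$ is likewise how the paper treats general data. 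Up to the appearance of the crossing remainder your algebra is correct (your contraction of \eqref{rseq} is the literal one; the paper's \eqref{2Rest1} has a slightly different coefficient in the cubic leftover, apparently a typo, which does not change the structure).

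The genuine gap is in the step you yourself single out as decisive: the cubic crossing contribution $\int(|w|+|z|)\,\mathbf{R}^2\mathbf{S}^2\,dx$ hidden in $\mathfrak{R}_-$ and its forward twin. Writing $\mathbf{S}^2=(\mathcal{V}^-)_x$ and integrating by parts produces $\int|w_x|\,\mathbf{R}^2\,\mathcal{V}^-\,dx$ and $\int|w|\,|\mathbf{R}\cdot\mathbf{R}_x|\,\mathcal{V}^-\,dx$, and neither is controlled by the Finsler norm: $I_3$ measures $\bigl|w_x+\tfrac{c'}{4c^2}(w-z)S_1\bigr|$ with weight $1$, not with weight $\mathbf{R}^2$; $I_5$ only measures the full combination $2\mathbf{R}\cdot\mathbf{r}^*+\mathbf{R}^2w_x+\cdots$, from which the single piece $\mathbf{R}^2w_x$ cannot be extracted; and $\mathbf{R}_x$ (second derivatives of $\mathbf{n}$) appears nowhere in \eqref{norm1}, so $\int|w|\,|\mathbf{R}\cdot\mathbf{R}_x|\,\mathcal{V}^-dx$ can be arbitrarily large along paths of bounded norm and bounded energy. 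More fundamentally, the ``interaction-absorption mechanism'' you invoke is intrinsically a time-derivative mechanism: the good negative terms $-c_0\int\mathbf{S}^2\,J_k^-\,\mathcal{V}^-dx$ in \eqref{sum} come from the decay $-2c\,\mathbf{S}^2$ of $\mathcal{V}^-$ along backward characteristics in \eqref{Westimate}, so they exist only after differentiating in $t$; at the fixed time $t$ of Proposition \ref{Prop_KR} there is no negative term available and nothing gets ``absorbed.'' (The same caveat applies to your ``immediate'' absorption of the lower-order piece: the $z$-weighted part of $I_1$ carries the weight $(1+\mathbf{S}^2)\mathcal{V}^+$, which does not dominate $\mathbf{R}^2$ pointwise, so $\int|z|\,\mathbf{R}^2dx$ is not simply bounded by $I_1$ either.) Note that the paper's (admittedly terse) proof never appeals to such an absorption: it bounds the $\lambda$-derivative directly by the $I_1$- and $I_5$-integrands. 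As written, your treatment of the remainder does not close the estimate, and this is exactly the point where a complete argument is required.
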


\begin{proof}
Let $\Gamma^t:\lambda\mapsto \big(\mathbf{n}^\lambda(t),\mathbf{n}^\lambda_t(t)\big)$ be a regular path connecting $\mathbf{n}(t)$ with $\hat{\mathbf{n}}(t)$. For any function $f$ with $\|f\|_{\mathcal{C}^1}\leq 1$,  let  $\mu^\lambda$  be the measure with density $ (\mathbf{n}^\lambda_t)^2+c(n_1^\lambda) (\mathbf{n}^\lambda_x)^2=\frac{1}{2}((\mathbf{R}^\lambda)^2+(\mathbf{S}^\lambda)^2)$ with respect to the Lebesgue measure. Then a direct computation gives rise to
\begin{equation*}
\begin{split}
&\left|\int_0^1\frac{d}{d\lambda}\int f\,d\mu^\lambda\,d\lambda \right| \leq C\int_0^1\int_{\mathbb{R}} \Big(|w^\lambda|(1+(\mathbf{R}^\lambda)^2)+|z^\lambda|(1+(\mathbf{S}^\lambda)^2)\Big)\,dx\,d\lambda\\
&\quad+\int_0^1\int_{\mathbb{R}}|f|\cdot\Big|
2\mathbf{R}^\lambda\cdot(\mathbf{r}^\lambda+\mathbf{R}^\lambda_xw^\lambda)+(\mathbf{R}^\lambda)^2w_x^\lambda
+2\mathbf{S}^\lambda\cdot(\mathbf{s}^\lambda+\mathbf{S}^\lambda_xz^\lambda)+(\mathbf{S}^\lambda)^2z_x^\lambda\Big|\,dx\,d\lambda\\
 &\leq C\int_0^1\int_{\mathbb{R}} \Big\{ |w^\lambda|(1+(\mathbf{R}^\lambda)^2)+|z^\lambda|(1+(\mathbf{S}^\lambda)^2)\\
&\quad\qquad\qquad+\big|2\mathbf{R}^\lambda\cdot(\mathbf{r}^*)^\lambda+(\mathbf{R}^\lambda)^2w_x^\lambda
+\frac{c'(w^\lambda-z^\lambda)}{4c^2}(\mathbf{R}^\lambda)^2S_1^\lambda\big|
\\
&\quad\qquad\qquad+\big|2\mathbf{S}^\lambda(\mathbf{s}^*)^\lambda+(\mathbf{S}^\lambda)^2z^\lambda_x+\frac{c'(w^\lambda-z^\lambda)}{4c^2}
 (\mathbf{S}^\lambda)^2R_1^\lambda\big|\Big\}\,dx\,d\lambda.
\end{split}\end{equation*}
This yields the desired conclusion of Proposition \ref{Prop_KR} immediately.
\end{proof}

\begin{appendices}


\section{The proof of Lemma \ref{lem_est}} \label{app}
Now we give detail proof for \eqref{sum}. This is the key estimate in the proof of Lemma \ref{lem_est}. We leave it in the appendix since it is lengthy.
\begin{proof}
The goal of the forthcoming computations is to validate the estimate \eqref{sum}.
 For the sake of clarity, we divide it into six steps.
\vspace{.2cm}
\paragraph{\bf Step 1.} To estimate the time derivative of $I_0$, we first write the first equation of \eqref{wz} in the form
\begin{equation*}\label{wt}
\displaystyle w_t-(cw)_x=-c'(v_1+
\frac{R_1w-S_1z}{2c})+\frac{c'}{c}S_1w
-\frac{c'}{2c}(S_1z+R_1w).
\end{equation*}
Now, from the uniform bounds \eqref{Westimate} on the weights, we find easily that
\begin{equation}\label{w11}
\begin{split}
&\frac{d}{dt}\int_\mathbb{R}J_0^-\mathcal{V}^-\,dx = \frac{d}{dt}\int_\mathbb{R}|w|\mathcal{V}^-\,dx \\
&\leq C\int_\mathbb{R} |w|(|R_1|+|S_1|)\mathcal{V}^-\,dx+C\int_\mathbb{R} |z||S_1|\mathcal{V}^+\,dx+ C\int_\mathbb{R}  \Big|v_1+\frac{R_1w-S_1z}{2c}\Big|\mathcal{V}^-\,dx\\
&\quad+G(t)\int_\mathbb{R} |w|\mathcal{V}^-\,dx-2c_0\int_\mathbb{R} |w|\mathbf{S}^2\mathcal{V}^-\,dx.
\end{split}
\end{equation}
Similar to the estimate of \eqref{w11}, we can also obtain the time derivative of  $\int_\mathbb{R}J_0^+\mathcal{V}^+\,dx$. Hence, it holds that
\begin{equation}\label{I0est}
\begin{split}
\frac{d}{dt}I_0 &=\frac{d}{dt}\int_\mathbb{R}
\Big(J_0^-\mathcal{V}^-+J_0^+\mathcal{V}^+\Big)\,dx\\
&\leq C\sum_{k=1,2}\int_\mathbb{R}\Big((1+|\mathbf{S}|)J_k^-\mathcal{V}^-
+(1+|\mathbf{R}|)J_k^+\mathcal{V}^+\Big)\,dx\\
&\quad+G(t)I_0-2c_0\int_\mathbb{R} \Big(\mathbf{S}^2J_0^-\mathcal{V}^-+\mathbf{R}^2J_0^+\mathcal{V}^+\Big)\,dx.
\end{split}
\end{equation}

\vspace{.2cm}
\paragraph{\bf Step 2.} Next, we estimate the time derivative of $I_1$. By \eqref{R-S-eqn} and the first equation of \eqref{balance}, we have
\begin{equation}\label{1+R}
(1+\mathbf{R}^2)_t-[c(1+\mathbf{R}^2)]_x=\frac{c'}{2c}
(\mathbf{R}^2S_1-R_1\mathbf{S}^2-R_1
+S_1).
\end{equation}
This together with \eqref{wz} yields
\begin{equation}\label{wt1}
\begin{split}
&\big[w(1+\mathbf{R}^2)\big]_t-\big[cw(1+\mathbf{R}^2)\big]_x\\
&=(w_t-cw_x)(1+\mathbf{R}^2)
+w[(1+\mathbf{R}^2)_t-\big(c(1+\mathbf{R}^2)\big)_x]\\
&=-c'\big(v_1+
\frac{R_1-S_1}{2c}w\big)(1+\mathbf{R}^2)+\frac{c'w}{2c}\big
(\mathbf{R}^2S_1-R_1\mathbf{S}^2-R_1
+S_1\big)\\
&=-c'\big(v_1+
\frac{R_1w-S_1z}{2c}\big)(1+\mathbf{R}^2)+\frac{c'w}{2c}\big
(2\mathbf{R}^2S_1-R_1\mathbf{S}^2-R_1
+2S_1\big)-\frac{c'z}{2c}S_1(1+\mathbf{R}^2).
\end{split}
\end{equation}
By summing up \eqref{Westimate} and \eqref{wt1}, we achieve
\begin{equation*}
\begin{split}
&\frac{d}{dt}\int_\mathbb{R}J_1^-\mathcal{V}^-\,dx = \frac{d}{dt}\int_\mathbb{R}|w|(1+\mathbf{R}^2)\mathcal{V}^-\,dx \\
&\leq C\int_\mathbb{R} |w|(|R_1\mathbf{S}^2|+|\mathbf{R}^2S_1|+|R_1|+|S_1|)\mathcal{V}^-\,dx\\
&\quad+C\int_\mathbb{R} |z|(|S_1|+|\mathbf{R}^2S_1|)\mathcal{V}^+\,dx+C\int_\mathbb{R}  \Big|v_1+\frac{R_1w-S_1z}{2c}\Big|(1+\mathbf{R}^2)\mathcal{V}^-\,dx\\
&\quad+G(t)\int_\mathbb{R} |w|(1+\mathbf{R}^2)\mathcal{V}^-\,dx-2c_0\int_\mathbb{R} |w|(1+\mathbf{R}^2)\mathbf{S}^2\mathcal{V}^-\,dx.
\end{split}
\end{equation*}
In a similar way, we can get the remaining term of $I_1$. We thus conclude from the Young inequality that
\begin{equation}\label{I1est}
\begin{split}
\frac{d}{dt}I_1
\leq& C\sum_{k=1,2}\int_\mathbb{R}\Big((1+|\mathbf{S}|)J_k^-\mathcal{V}^-
+(1+|\mathbf{R}|)J_k^+\mathcal{V}^+\Big)\,dx+ C\int_\mathbb{R}\Big(\mathbf{S}^2J_0^-\mathcal{V}^-+\mathbf{R}^2J_0^+\mathcal{V}^+\Big)\,dx\\
&
+G(t)I_1-c_0\int_\mathbb{R} \Big(\mathbf{S}^2J_1^-\mathcal{V}^-
+\mathbf{R}^2J_1^+\mathcal{V}^+\Big)\,dx.
\end{split}
\end{equation}

\vspace{.2cm}
\paragraph{\bf Step 3.}  For the time derivative of $I_2$, we first use
\eqref{vx} and \eqref{vt} to derive the equation for the first order perturbation $\mathbf{v}$ as
\begin{equation}\label{vequ}
\mathbf{v}_t-c\mathbf{v}_x=\mathbf{s}+
\frac{c'}{2c}(\mathbf{R}-\mathbf{S})v_1.
\end{equation}
On the other hand, it follows from \eqref{R-S-eqn} and \eqref{wz} that
\begin{equation}\label{Rwt}
\begin{split}
&\Big[\frac{R_iw-S_iz}{2c}\Big]_t
-c\Big[\frac{R_iw-S_iz}{2c}\Big]_x\\
&=\frac{w}{2c}(\partial_t R_i-c\partial_xR_i)
-\frac{z}{2c}(\partial_t S_i+c\partial_xS_i)+z\partial_xS_i
+\frac{R_i}{2c}(w_t-cw_x)\\
&\quad-\frac{S_i}{2c}(z_t+cz_x)+S_iz_x
+(R_iw-S_iz)\big[(\frac{1}{2c})_t-c(\frac{1}{2c})_x\big]\\
&=\frac{n_i}{8c^3}\Big[(c^2-\zeta_i)(\mathbf{R}^2+\mathbf{S}^2)
-2(3c^2-\zeta_i)\mathbf{R}\cdot\mathbf{S}\Big](w-z)+z\partial_xS_i+S_iz_x\\
&\quad-\frac{c'}{2c}\big(v_1+\frac{R_1w-S_1z}{2c}    \big)(R_i+S_i)
+\frac{c'}{4c^2}(R_1-S_1)(R_iw-S_iz),
\end{split}
\end{equation}
for $i=1,2,3$ and $ \zeta_1=\ga,$ $\zeta_2=\zeta_3=\al.$ Hence combining \eqref{1+R}, \eqref{vequ} and \eqref{Rwt}, we obtain
\begin{equation*}\label{Rwt1}
\begin{split}
&\Big[\big(v_i+\frac{R_iw-S_iz}{2c}\big)(1+\mathbf{R}^2)\Big]_t
-\Big[c\big(v_i+\frac{R_iw-S_iz}{2c}\big)(1+\mathbf{R}^2)\Big]_x\\
&=\Big[(\partial_tv_i-c\partial_xv_i)
+\big(\frac{R_iw-S_iz}{2c}\big)_t
-c\big(\frac{R_iw-S_iz}{2c}\big)_x\Big](1+\mathbf{R}^2)\\
&\quad+\big(v_i+\frac{R_iw-S_iz}{2c}\big)
\big[(1+\mathbf{R}^2)_t-\big(c(1+\mathbf{R}^2)\big)_x\big]\\
&=(1+\mathbf{R}^2)\Big[s^*_i+S_i(z_x+\frac{c'}{4c^2}(w-z)R_1)
-\frac{c'}{c}S_i(v_1+\frac{R_1w-S_1z}{2c}\big)\Big]
\\
&\quad+\frac{n_i}{8c^3}(c^2-\zeta_i)(w-z)(1+\mathbf{R}^2)\mathbf{S}^2
+\frac{c'}{2c}\big(v_i+\frac{R_iw-S_iz}{2c}\big)(\mathbf{R}^2S_1-
R_1\mathbf{S}^2-R_1+S_1).
\end{split}
\end{equation*}
This together with the uniform bounds on the weights \eqref{Westimate} implies that
\begin{equation*}\label{J2est}
\begin{split}
&\frac{d}{dt}\int_\mathbb{R}J_2^-\mathcal{V}^-\,dx = \frac{d}{dt}\sum_{i=1}^3\int_\mathbb{R}\Big|v_i+\frac{R_iw-S_iz}{2c}\Big|(1+\mathbf{R}^2)\mathcal{V}^-\,dx \\
&\leq C\sum_{i=1}^3\int_\mathbb{R}  \Big|v_i+\frac{R_iw-S_iz}{2c}\Big|\big[(|R_1|+|S_1|+|S_i|+|\mathbf{R}^2S_1|+|\mathbf{R}^2S_i|)\mathcal{V}^-
+|R_1\mathbf{S}^2|\mathcal{V}^+\big]\,dx\\
&\quad+C\int_\mathbb{R} \Big|s^*_i+S_i(z_x+\frac{c'}{4c^2}(w-z)R_1)\Big|(1+\mathbf{R}^2)\mathcal{V}^+\,dx
+C\int_\mathbb{R} |w|(1+\mathbf{R}^2)\mathbf{S}^2\mathcal{V}^-\,dx\\
&\quad+C\int_\mathbb{R} |z|(1+\mathbf{R}^2)\mathbf{S}^2\mathcal{V}^+\,dx+G(t)\sum_{i=1}^3\int_\mathbb{R} \Big|v_i+\frac{R_iw-S_iz}{2c}\Big|(1+\mathbf{R}^2)\mathcal{V}^-\,dx\\
&\quad
-2c_0\sum_{i=1}^3\int_\mathbb{R} \Big|v_i+\frac{R_iw-S_iz}{2c}\Big|(1+\mathbf{R}^2)\mathbf{S}^2\mathcal{V}^-\,dx.
\end{split}
\end{equation*}
By using the same argument to the time derivative of $\int_\mathbb{R}J_2^+\mathcal{V}^+\,dx$, we arrive at
\begin{equation}\label{I2est}
\begin{split}
\frac{d}{dt}I_2
\leq&  C\int_\mathbb{R}\Big((1+|\mathbf{S}|)J_2^-\mathcal{V}^-
+(1+|\mathbf{R}|)J_2^+\mathcal{V}^+\Big)\,dx\\
&+
C\sum_{k=1,4}\int_\mathbb{R}\Big((1+\mathbf{S}^2)J_k^-\mathcal{V}^-
+(1+\mathbf{R}^2)J_k^+\mathcal{V}^+\Big)\,dx\\
&+G(t)I_2
-2c_0\int_\mathbb{R} \Big(\mathbf{S}^2J_2^-\mathcal{V}^-+\mathbf{R}^2J_2^+\mathcal{V}^+\Big)\,dx.
\end{split}
\end{equation}

\vspace{.2cm}
\paragraph{\bf Step 4.}  Now we devote to the time derivative of $I_3$.
By \eqref{R-S-eqn} and \eqref{wz}, it holds that
\begin{equation}\label{Sest}
\begin{split}
&\Big[\frac{c'}{4c^2}(w-z)S_i\Big]_t
-\Big[c\frac{c'}{4c^2}(w-z)S_i\Big]_x\\
&=\frac{c'S_i}{4c^2}(w_t-cw_x)
-\frac{c'S_i}{4c^2}(z_t+cz_x)+\frac{c'S_i}{2c}z_x
+\frac{c'(w-z)}{4c^2}\big(\partial_tS_i+c\partial_xS_i\big)\\
&\quad
-\frac{c'(w-z)}{2c}\partial_x S_i+\frac{cc''-2(c')^2}{4c^3}(\partial_tn_1-c\partial_xn_1)(w-z)S_i-\frac{c'(w-z)}{4c^2}S_ic'\partial_x n_1\\
&=-\frac{(c')^2}{2c^2}\big(v_1+\frac{R_1w-S_1z}{2c}\big)S_i
+\frac{c'n_i}{16c^4}(w-z)\big[(c^2-\zeta_i)(\mathbf{R}^2+\mathbf{S}^2)
-2(3c^2-\zeta_i)\mathbf{R}\cdot\mathbf{S}\big]\\
&\quad
+\frac{c'S_i}{2c}z_x-\frac{c'(w-z)}{2c}\partial_x S_i+\frac{c''(w-z)}{4c^2}S_1 S_i-\frac{(c')^2}{8c^3}(w-z)(S_1 S_i+R_iS_1),
\end{split}
\end{equation}
for $i=1,2,3$ and $ \zeta_1=\ga, \zeta_2=\zeta_3=\al.$  Next, differentiating $\eqref{wz}_1$ with respect to $x$, it is clear that
\begin{equation}\label{wxt}
\begin{split}
w_{tx}&-\big(c w_x\big)_x=\frac{(c')^2-cc''}{2c^2}(R_1-S_1)(v_1+\frac{R_1-S_1}{2c}w)\\
&-\frac{c'}{2c}(r_1+w\partial_xR_1+R_1w_x)
+\frac{c'}{2c}(s_1+w\partial_xS_1+S_1w_x).
\end{split}\end{equation}
Putting \eqref{rseq}, \eqref{Sest} for $i=1$ and \eqref{wxt} together leads to\
\begin{equation}\label{wxtest}
\begin{split}
&\Big[w_x+\frac{c'}{4c^2}(w-z)S_1\Big]_t
-\Big[c\big(w_x+\frac{c'}{4c^2}(w-z)S_1\big)\Big]_x\\
&=w_{xt}-\big(cw_x\big)_x+\Big[\frac{c'}{4c^2}(w-z)S_1\Big]_t
-\Big[c\frac{c'}{4c^2}(w-z)S_1\Big]_x\\
&=-\frac{c'}{2c}(r_1+w\partial_xR_1+R_1w_x)+\frac{c'}{2c}(s_1+z\partial_xS_1+S_1z_x)
+\frac{c'}{2c}S_1\big(w_x+\frac{c'}{4c^2}(w-z)S_1\big)\\
&\quad+\frac{(c')^2-cc''}{2c^2}\big(v_1+\frac{R_1w-S_1z}{2c}\big)(R_1-S_1)
-\frac{(c')^2}{2c^2}\big(v_1+\frac{R_1w-S_1z}{2c}\big)S_1\\
&\quad+\frac{c'n_1}{16c^4}(w-z)\big[(c^2-\gamma)(\mathbf{R}^2+\mathbf{S}^2)
-2(3c^2-\gamma)\mathbf{R}\cdot\mathbf{S}\big]+\frac{2 c c''-3(c')^2}{8c^3}(w-z)R_1S_1\\
&=-\frac{c'}{2c}\big[r^*_1+R_1(w_x+\frac{c'}{4c^2}(w-z)S_1)\big]
+\frac{c'}{2c}\big[s^*_1+S_1(z_x+\frac{c'}{4c^2}(w-z)R_1)\big]\\
&\quad+\frac{c'}{2c}S_1\big(w_x+\frac{c'}{4c^2}(w-z)S_1\big)
+\frac{(c')^2-cc''}{2c^2}\big(v_1+\frac{R_1w-S_1z}{2c}\big)(R_1-S_1)\\
&\quad
-\frac{(c')^2}{2c^2}\big(v_1+\frac{R_1w-S_1z}{2c}\big)S_1+\frac{c'n_1}{8c^4}(w-z)\big[(c^2-\gamma)\mathbf{S}^2
-(3c^2-\gamma)\mathbf{R}\cdot\mathbf{S}\big]\\
&\quad+\frac{2 c c''-3(c')^2}{8c^3}(w-z)R_1S_1,
\end{split}
\end{equation}
from which one can deduce that
\begin{equation}\label{J3est}
\begin{split}
&\frac{d}{dt}\int_\mathbb{R}J_3^-\mathcal{V}^-\,dx = \frac{d}{dt}\int_\mathbb{R}\Big|w_x+\frac{c'}{4c^2}(w-z)S_1\Big|\mathcal{V}^-\,dx \\
&\leq C\int_\mathbb{R} |w|(|R_1S_1|+\mathbf{S}^2+|\mathbf{R}\cdot\mathbf{S}|)\mathcal{V}^-\,dx+C\int_\mathbb{R} |z|(|R_1S_1|+\mathbf{S}^2+|\mathbf{R}\cdot\mathbf{S}|)\mathcal{V}^+\,dx\\
&\quad+C\int_\mathbb{R}  \Big|v_1+\frac{R_1w-S_1z}{2c}\Big|(|R_1|+|S_1|)\mathcal{V}^-\,dx+C\int_\mathbb{R} \Big|w_x+\frac{c'}{4c^2}(w-z)S_1\Big||S_1|\mathcal{V}^-\,dx\\
&\quad+C\int_\mathbb{R} \Big[\Big|r^*_1+R_1(w_x+\frac{c'}{4c^2}(w-z)S_1)\Big|\mathcal{V}^-+ \Big|s^*_1+S_1(z_x+\frac{c'}{4c^2}(w-z)R_1)\Big|\mathcal{V}^+\Big]\,dx\\
&\quad
+G(t)\int_\mathbb{R} \Big|w_x+\frac{c'}{4c^2}(w-z)S_1\Big|\mathcal{V}^-\,dx
-2c_0\int_\mathbb{R} \Big|w_x+\frac{c'}{4c^2}(w-z)S_1\Big|S^2\mathcal{V}^-\,dx.
\end{split}
\end{equation}
Hence, arguing exactly as for the proof of \eqref{J3est} shows that
\begin{equation}\label{I3est}
\begin{split}
\frac{d}{dt}I_3
\leq&  C\sum_{k=2,3,4}\int_\mathbb{R}\Big((1+|\mathbf{S}|)J_k^-\mathcal{V}^-
+(1+|\mathbf{R}|)J_k^+\mathcal{V}^+\Big)\,dx\\
&+
C\int_\mathbb{R}\Big((1+\mathbf{S}^2)J_1^-\mathcal{V}^-
+(1+\mathbf{R}^2)J_1^+\mathcal{V}^+\Big)\,dx\\
&+G(t)I_3-2c_0\int_\mathbb{R} \Big(\mathbf{S}^2J_3^-\mathcal{V}^-+\mathbf{R}^2J_3^+\mathcal{V}^+\Big)\,dx.
\end{split}
\end{equation}

\vspace{.2cm}
\paragraph{\bf Step 5.}  Bounding the time derivative of $I_4$ is a little bit complicated. To this end, we first
differentiate $\eqref{R-S-eqn}_1$ with respect to $x$ and arrive at
\begin{equation}\label{Rxt}
\begin{split}
\partial_{xt}R_{i}&-\big(c\partial_xR_i\big)_x=
\frac{n_i}{2c^2}\big[(c^2-\zeta_i)(\mathbf{R}\cdot\mathbf{R}_x
+\mathbf{S}\cdot\mathbf{S}_x)-(3c^2-\zeta_i)
(\mathbf{R}_x\cdot\mathbf{S}+\mathbf{R}\cdot\mathbf{S}_x)\big]\\
&+\frac{cc''-(c')^2}{4c^3})(R_1-S_1)(R_i-S_i)R_1
+\frac{c'}{2c}\big[(\partial_xR_i-\partial_xS_i)R_1+(R_i-S_i)\partial_xR_1\big]\\
&+\frac{1}{8c^3}\big[(c^2-\zeta_i)(\mathbf{R}^2+\mathbf{S}^2)-
2(3c^2-\zeta_i)\mathbf{R}\cdot\mathbf{S}\big](R_i-S_i)\\
&+\frac{c'\lambda_in_i}{4c^4}(\mathbf{R}^2+\mathbf{S}^2-2\mathbf{R}\cdot\mathbf{S})(R_1-S_1)
,
\end{split}\end{equation}
for $i=1,2,3$ and $ \zeta_1=\ga, \zeta_2=\zeta_3=\al.$ Then with aid of \eqref{rt}, \eqref{wz} and \eqref{Rxt}, we deduce that
\begin{equation}\label{rtest1}
\begin{split}
&\big[r_i+w\partial_xR_i\big]_t-\big[c(r_i+w\partial_xR_i)\big]_x\\
&=\partial_tr_i-c\partial_xr_i-r_ic'\partial_x n_1
+\partial_xR_i(w_t-cw_x)
+w[(\partial_xR_{i})_t-(c\partial_xR_i)_x]\\
&=\Big[\frac{c'\lambda_in_i}{2c^3}(\mathbf{R}^2+\mathbf{S}^2-2\mathbf{R}\cdot\mathbf{S})
+\frac{cc''-(c')^2}{2c^2}(R_i-S_i)R_1\Big](v_1+\frac{R_1-S_1}{2c}w)
\\
&\quad+\frac{1}{4c^2}\big[(c^2-\zeta_i)(\mathbf{R}^2+\mathbf{S}^2)
-2(3c^2-\zeta_i)\mathbf{R}\cdot\mathbf{S}\big](v_i+\frac{R_i-S_i}{2c}w)\\
&\quad+\frac{c'}{2c}\big[(R_i-S_i)(r_1+w\partial_xR_1)-R_1(s_i+w\partial_xS_i)
+S_1(r_i+w\partial_xR_i)\big]\\
&\quad+\frac{n_i}{2c^2}(c^2-\zeta_i)\big[\mathbf{R}\cdot(\mathbf{r}+\mathbf{R}_xw)
+\mathbf{S}\cdot(\mathbf{s}+\mathbf{S}_xw)\big]\\
&\quad-\frac{n_i}{2c^2}(3c^2-\zeta_i)\big[\mathbf{R}\cdot(\mathbf{s}+\mathbf{S}_xw)
+\mathbf{S}\cdot(\mathbf{r}+\mathbf{R}_xw)\big].\\
\end{split}
\end{equation}
In addition, combining \eqref{R-S-eqn}, \eqref{balance} and \eqref{wz} directly gives
\begin{equation}\label{rtest2}
\begin{split}
&\Big[\frac{n_i}{8c^3}\Big((c^2-\zeta_i)\mathbf{S}^2
-2(3c^2-\zeta_i)\mathbf{R}\cdot\mathbf{S}\Big)(w-z)\Big]_t\\
&\quad-\Big[c\frac{n_i}{8c^3}\Big((c^2-\zeta_i)\mathbf{S}^2
-2(3c^2-\zeta_i)\mathbf{R}\cdot\mathbf{S}\Big)(w-z)\Big]_x\\
&=\frac{\partial_tn_i-c\partial_xn_i}{8c^3}\Big((c^2-\zeta_i)\mathbf{S}^2
-2(3c^2-\zeta_i)\mathbf{R}\cdot\mathbf{S}\Big)(w-z)\\
&\quad+\frac{n_i}{8c^3}\Big((c^2-\zeta_i)\mathbf{S}^2
-2(3c^2-\zeta_i)\mathbf{R}\cdot\mathbf{S}\Big)\big[(w_t-cw_x)
-(z_t+cz_x)+2cz_x\big]\\
&\quad+\frac{n_i(c^2-\zeta_i)}{8c^3}
\big[(\mathbf{S}^2)_t+\big(c\mathbf{S}^2\big)_x-2\big(c\mathbf{S}^2\big)_x\big](w-z)\\
&\quad-\frac{n_i(3c^2-\zeta_i)}{4c^3}
\big[(\mathbf{R}_t-c\mathbf{R}_x)\cdot\mathbf{S}
+\mathbf{R}\cdot(\mathbf{S}_t+c\mathbf{S}_x)-2c\mathbf{R}\cdot\mathbf{S}_x
-\mathbf{R}\cdot\mathbf{S}c'\partial_x n_1\big](w-z)\\
&\quad+\frac{n_i}{8}\Big(\mathbf{S}^2\big[\big(\frac{c^2-\zeta_i}{c^3}\big)_t
-c\big(\frac{c^2-\zeta_i}{c^3}\big)_x\big]
-2\mathbf{R}\cdot\mathbf{S}\big[
\big(\frac{3c^2-\zeta_i}{c^3}\big)_t
-c\big(\frac{3c^2-\zeta_i}{c^3}\big)_x\big]\Big)(w-z)\\
&=\frac{S_i}{8c^3}\Big((c^2-\zeta_i)\mathbf{S}^2
-2(3c^2-\zeta_i)\mathbf{R}\cdot\mathbf{S}\Big)(w-z)
-\frac{c^2-\zeta_i}{2c^2}n_i(w-z)\mathbf{S}\cdot\mathbf{S}_x\\
&\quad+\frac{3c^2-\zeta_i}{2c^2}n_i(w-z)\mathbf{R}\cdot\mathbf{S}_x
+\frac{n_iz_x}{4c^2}\Big((c^2-\zeta_i)\mathbf{S}^2
-2(3c^2-\zeta_i)\mathbf{R}\cdot\mathbf{S}\Big)\\
&\quad-\frac{c'n_i}{4c^3}\Big((c^2-\zeta_i)\mathbf{S}^2
-2(3c^2-\zeta_i)\mathbf{R}\cdot\mathbf{S}\Big)\big(v_1+\frac{R_1w-S_1z}{2c}\big)
\\
&\quad -\frac{c'\zeta_i n_i}{16c^4}(w-z)\big(R_1\mathbf{R}^2+2\mathbf{R}^2S_1-4R_1\mathbf{R}\cdot\mathbf{S}
+4S_1\mathbf{R}\cdot\mathbf{S}
+3R_1\mathbf{S}^2-2S_1\mathbf{S}^2\big)\\
&\quad +\frac{c'n_i}{16c^2}(w-z)\big(3R_1\mathbf{R}^2+8\mathbf{R}^2S_1-12R_1\mathbf{R}\cdot\mathbf{S}
-12S_1\mathbf{R}\cdot\mathbf{S}
+9R_1\mathbf{S}^2+4S_1\mathbf{S}^2\big).
\end{split}
\end{equation}
To continue, one can use \eqref{R-S-eqn} and \eqref{Sest} to get
\begin{equation}\label{rtest3}
\begin{split}
&\Big[\frac{c'}{4c^2}(w-z)R_1S_i\Big]_t
-\Big[c\frac{c'}{4c^2}(w-z)R_1S_i\Big]_x\\
&=(\partial_t R_1-c\partial_xR_1)\frac{c'}{4c^2}(w-z)S_i+R_1\big[\big(\frac{c'}{4c^2}(w-z)S_i\big)_t
-\big(c\frac{c'}{4c^2}(w-z)S_i\big)_x\big]\\
&=-\frac{(c')^2}{2c^2}\big(v_1+\frac{R_1w-S_1z}{2c}\big)R_1S_i
+\frac{c'R_1S_i}{2c}z_x-\frac{c'(w-z)}{2c}R_1\partial_x S_i\\
&\quad+\frac{c'}{16c^4}(w-z)R_1n_i\big[(c^2-\zeta_i)(\mathbf{R}^2+\mathbf{S}^2)
-2(3c^2-\zeta_i)\mathbf{R}\cdot\mathbf{S}\big]\\
&\quad+\frac{c'}{16c^4}(w-z)S_in_1\big[(c^2-\gamma)(\mathbf{R}^2+\mathbf{S}^2)
-2(3c^2-\gamma)\mathbf{R}\cdot\mathbf{S}\big]\\
&\quad
+\frac{c''(w-z)}{4c^2}R_1S_1 S_i-\frac{(c')^2}{8c^3}(w-z)(2R_1S_1 S_i+R_1R_iS_1-R_1^2S_i).
\end{split}
\end{equation}
In terms of \eqref{rseq} and \eqref{rtest1}--\eqref{rtest3}, the vertical displacement $\mathbf{r}^*$ satisfies the following equation
\begin{equation}\label{rtest5}
\begin{split}
&(r^*_i)_t-(cr^*_i)_x\\
&=\Big[\frac{c'\lambda_in_i}{2c^3}(\mathbf{R}^2+\mathbf{S}^2-2\mathbf{R}\cdot\mathbf{S})
+\frac{cc''-(c')^2}{2c^2}(R_i-S_i)R_1\Big](v_1+\frac{R_1w-S_1z}{2c})
\\
&\quad+\Big[\frac{(c')^2}{2c^2}R_1S_i-\frac{c'n_i}{4c^3}\big[(c^2-\zeta_i)\mathbf{S}^2
-2(3c^2-\zeta_i)\mathbf{R}\cdot\mathbf{S}\big]\Big](v_1+\frac{R_1w-S_1z}{2c})\\
&\quad+\frac{1}{4c^2}\big[(c^2-\zeta_i)(\mathbf{R}^2+\mathbf{S}^2)
-2(3c^2-\zeta_i)\mathbf{R}\cdot\mathbf{S}\big](v_i+\frac{R_iw-S_iz}{2c})\\
&\quad+\frac{n_i}{2c^2}(c^2-\zeta_i)\big[\mathbf{R}\cdot(\mathbf{r}+\mathbf{R}_xw)
+\mathbf{S}\cdot(\mathbf{s}+\mathbf{S}_xz)\big]\\
&\quad-\frac{n_i}{2c^2}(3c^2-\zeta_i)\big[\mathbf{R}\cdot(\mathbf{s}+\mathbf{S}_xz)
+\mathbf{S}\cdot(\mathbf{r}+\mathbf{R}_xw)\big]
\\
&\quad+\frac{c'}{2c}\big[(R_i-S_i)(r_1+w\partial_xR_1)-R_1(s_i+z\partial_xS_i+S_iz_x)
+S_1(r_i+w\partial_xR_i)\big]\\
&\quad-\frac{c'}{16c^4}(w-z)R_1n_i\big[(c^2-\zeta_i)(\mathbf{R}^2+\mathbf{S}^2)
-2(3c^2-\zeta_i)\mathbf{R}\cdot\mathbf{S}\big]\\
&\quad-\frac{c'}{16c^4}(w-z)S_in_1\big[(c^2-\gamma)(\mathbf{R}^2+\mathbf{S}^2)
-2(3c^2-\gamma)\mathbf{R}\cdot\mathbf{S}\big]\\
&\quad+\frac{(c')^2}{8c^3}(w-z)(3R_1R_iS_1-R_1^2S_i)
-\frac{c''}{4c^2}(w-z)R_1R_iS_1\\
&\quad-\frac{c^2-\zeta_i}{8c^3}(w-z)\mathbf{R}^2S_i
+\frac{n_iz_x}{4c^2}\Big((c^2-\zeta_i)\mathbf{S}^2
-2(3c^2-\zeta_i)\mathbf{R}\cdot\mathbf{S}\Big)\\
&\quad -\frac{c'\zeta_i n_i}{16c^4}(w-z)\big(R_1\mathbf{R}^2+6\mathbf{R}^2S_1-4R_1\mathbf{R}\cdot\mathbf{S}
-4S_1\mathbf{R}\cdot\mathbf{S}
+3R_1\mathbf{S}^2+2S_1\mathbf{S}^2\big)\\
&\quad +\frac{c'n_i}{16c^2}(w-z)\big(3R_1\mathbf{R}^2+8\mathbf{R}^2S_1-12R_1\mathbf{R}\cdot\mathbf{S}
-12S_1\mathbf{R}\cdot\mathbf{S}
+9R_1\mathbf{S}^2+4S_1\mathbf{S}^2\big).
\end{split}
\end{equation}

Moreover, it follows from \eqref{R-S-eqn} and \eqref{wxtest} that
\begin{equation}\label{Rwxtext}
\begin{split}
&\Big[R_i\big(w_x+\frac{c'}{4c^2}(w-z)S_1\big)\Big]_t
-\Big[cR_i\big(w_x+\frac{c'}{4c^2}(w-z)S_1\big)\Big]_x\\
&=(\partial_t R_i-c\partial_x R_i)\big(w_x+\frac{c'}{4c^2}(w-z)S_1\big)\\
&\quad+R_i\Big[\big(w_x+\frac{c'}{4c^2}(w-z)S_1\big)_t
-\big(c(w_x+\frac{c'}{4c^2}(w-z)S_1)\big)_x\Big]\\
&=\frac{(c')^2-cc''}{2c^2}\big(v_1+\frac{R_1w-S_1z}{2c}\big)(R_1-S_1)R_i
-\frac{(c')^2}{2c^2}\big(v_1+\frac{R_1w-S_1z}{2c}\big)R_iS_1\\
&\quad-\frac{c'}{2c}R_i(r_1+w\partial_xR_1)+\frac{c'}{2c}R_i(s_1
+z\partial_xS_1+S_1z_x)\\
&\quad
+\frac{c'}{2c}(R_iS_1-R_1S_i)\big(w_x+\frac{c'}{4c^2}(w-z)S_1\big)
+\frac{ c c''-(c')^2}{4c^3}(w-z)R_iR_1S_1\\
&\quad+\frac{n_i}{16c^4}\big(4c^2w_x+c'S_1(w-z)\big)\big[(c^2-\zeta_i)(\mathbf{R}^2+\mathbf{S}^2)
-2(3c^2-\zeta_i)\mathbf{R}\cdot\mathbf{S}\big]\\
&\quad+\frac{c'n_1}{16c^4}R_i(w-z)\big[(c^2-\gamma)(\mathbf{R}^2+\mathbf{S}^2)
-2(3c^2-\gamma)\mathbf{R}\cdot\mathbf{S}\big].
\end{split}
\end{equation}
Consequently, by \eqref{rtest5} and \eqref{Rwxtext}, it is easy to see that
\begin{equation*}
\begin{split}
&\Big[r_i^*+R_i\big(w_x+\frac{c'}{4c^2}(w-z)S_1\big)\Big]_t
-\Big[c\Big(r_i^*+R_i\big(w_x+\frac{c'}{4c^2}(w-z)S_1\big)\Big)\Big]_x\\
&=\frac{(c^2-\zeta_i)n_i}{4c^2}\big[2\mathbf{R}\cdot\mathbf{r}^*
+\mathbf{R}^2w_x+\frac{c'}{4c^2}(w-z)\mathbf{R}
^2S_1+2\mathbf{S}\cdot\mathbf{s}^*+\mathbf{S}^2z_x+\frac{c'}{4c^2}(w-z)R_1\mathbf{S}
^2\big]\\
&\quad-\frac{(3c^2-\zeta_i)n_i}{2c^2}\big[\mathbf{R}\cdot\big(\mathbf{s}^*
+\mathbf{S}(z_x+\frac{c'}{4c^2}(w-z)R_1)\big)
+\mathbf{S}\cdot(\mathbf{r}^*+\mathbf{R}(w_x+\frac{c'}{4c^2}(w-z)S_1)\big]
\\
&\quad-\frac{c'}{2c}R_1\big[s_i^*+S_i(z_x+\frac{c'}{4c^2}(w-z)R_1)\big]
+\frac{c'}{2c}R_i\big[s_1^*
+S_1(z_x+\frac{c'}{4c^2}(w-z)R_1)\big]\\
&\quad-\frac{c'}{2c}S_i\big[r^*_1+R_1(w_x+\frac{c'}{4c^2}(w-z)S_1)\big]
+\frac{c'}{2c}S_1\big[r^*_i
+R_i(w_x+\frac{c'}{4c^2}(w-z)S_1)\big]\\
&\quad+
\frac{(c^2-\zeta_i)n_i}{4c^2}\mathbf{S}^2(w_x+\frac{c'}{4c^2}(w-z)S_1\big)
+\frac{c'n_1}{16c^4}(c^2-\gamma)(w-z)\big(R_i\mathbf{S}^2-\mathbf{R}^2S_i\big)\\
&\quad+\Big[\frac{c'\lambda_in_i}{2c^3}(\mathbf{R}^2+\mathbf{S}^2-2\mathbf{R}\cdot\mathbf{S})
+\frac{cc''-2(c')^2}{2c^2}(R_iS_1-R_1S_i)\Big](v_1+\frac{R_1w-S_1z}{2c})
\\
&\quad-\frac{c'n_i}{4c^3}\big[(c^2-\zeta_i)\mathbf{S}^2
-2(3c^2-\zeta_i)\mathbf{R}\cdot\mathbf{S}\big](v_1+\frac{R_1w-S_1z}{2c})\\
&\quad+\frac{1}{4c^2}\big[(c^2-\zeta_i)(\mathbf{R}^2+\mathbf{S}^2)
-2(3c^2-\zeta_i)\mathbf{R}\cdot\mathbf{S}\big](v_i+\frac{R_iw-S_iz}{2c})\\
&\quad
-\frac{c^2-\zeta_i}{8c^3}(w-z)\mathbf{R}^2S_i-\frac{5c'\zeta_i n_i}{16c^4}(w-z)\mathbf{R}^2S_1+\frac{c'n_i}{16c^2}(w-z)\big(2R_1\mathbf{S}^2+3\mathbf{R}^2S_1\big),
\end{split}
\end{equation*}
which leads to
\begin{equation*}\label{ii}
\begin{split}
&\frac{d}{dt}\int_\mathbb{R}J_4^-\mathcal{V}^-\,dx = \frac{d}{dt}\sum_{i=1}^3\int_\mathbb{R}\big|r_i^*+R_i\big(w_x+\frac{c'}{4c^2}(w-z)S_1\big)\big|\mathcal{V}^-\,dx \\
&\leq
C\sum_{i=1}^3\int_\mathbb{R} \big|r_i^*+R_i\big(w_x+\frac{c'}{4c^2}(w-z)S_1\big)\big||\mathbf{S}|\mathcal{V}^-\,dx
\\
&\quad+C\sum_{i=1}^3\int_\mathbb{R} \big|s_i^*+S_i\big(z_x+\frac{c'}{4c^2}(w-z)R_1\big)\big||\mathbf{R}|\mathcal{V}^+\,dx
\\
&\quad +C\int_\mathbb{R} \Big|2\mathbf{R}\cdot\mathbf{r}^*
+\mathbf{R}^2w_x+\frac{c'}{4c^2}(w-z)\mathbf{R}
^2S_1\Big|\mathcal{V}^-\,dx+C\int_\mathbb{R} |w|(\mathbf{R}^2|\mathbf{S}|+|\mathbf{R}|\mathbf{S}^2)\mathcal{V}^-\,dx\\
&\quad
+C\int_\mathbb{R} \Big|2\mathbf{S}\cdot\mathbf{s}^*
+\mathbf{S}^2z_x+\frac{c'}{4c^2}(w-z)R_1\mathbf{S}
^2\Big|\mathcal{V}^+\,dx+C\int_\mathbb{R} |z|(\mathbf{R}^2|\mathbf{S}|+|\mathbf{R}|\mathbf{S}^2)\mathcal{V}^+\,dx
\\
&\quad
+C\sum_{i=1}^3\int_\mathbb{R}  \Big|v_i+\frac{R_iw-S_iz}{2c}\Big|(\mathbf{R}^2\mathcal{V}^-+\mathbf{S}^2\mathcal{V}^+
)\,dx+C\int_\mathbb{R} \Big|w_x+\frac{c'}{4c^2}(w-z)S_1\Big|\mathbf{S}^2\mathcal{V}^-\,dx\\
&\quad+G(t)
\sum_{i=1}^3\int_\mathbb{R}\big|r_i^*+R_i\big(w_x+\frac{c'}{4c^2}(w-z)S_1\big)\big|\mathcal{V}^-\,dx \\
&\quad-2c_0\sum_{i=1}^3\int_\mathbb{R} \big|r_i^*+R_i\big(w_x+\frac{c'}{4c^2}(w-z)S_1\big)\big|\mathbf{S}^2\mathcal{V}^-\,dx.
\end{split}
\end{equation*}

By performing a routine procedure, one can arrive at
\begin{equation}\label{I4est}
\begin{split}
\frac{d}{dt}I_4
\leq&  C\sum_{k=2,4,5}\int_\mathbb{R}\Big((1+|\mathbf{S}|)J_k^-\mathcal{V}^-
+(1+|\mathbf{R}|)J_k^+\mathcal{V}^+\Big)\,dx\\
&+
C\sum_{k=1,3}\int_\mathbb{R}\Big((1+\mathbf{S}^2)J_k^-\mathcal{V}^-
+(1+\mathbf{R}^2)J_k^+\mathcal{V}^+\Big)\,dx\\
&+G(t)I_4-2c_0\int_\mathbb{R} \Big(\mathbf{S}^2J_4^-\mathcal{V}^-+\mathbf{R}^2J_4^+\mathcal{V}^+\Big)\,dx.
\end{split}
\end{equation}

\vspace{.2cm}
\paragraph{\bf Step 6.}  In order to handle the last term of the norm \eqref{norm1}, we first observe that owing to  $|\mathbf{n}|=1$ and $c'(n_1)=\frac{(\gamma-\alpha)n_1}{c(n_1)}$, so that $\mathbf{R}\cdot\mathbf{n}=0$,  then it holds that
\begin{equation}\label{2Rest1}
\mathbf{R}\cdot\mathbf{r}^*=\mathbf{R}\cdot\mathbf{r}+\mathbf{R}\cdot\mathbf{R}_x w-\frac{c'}{8c^2}R_1\mathbf{S}^2(w-z).
\end{equation}
To estimate the time derivative of $\mathbf{R}\cdot\mathbf{r}^*$, using \eqref{balance}, we first compute
\begin{equation}\label{2Rest2}
\begin{split}
\big[\mathbf{R}\cdot\mathbf{r}\big]_t-\big[c\mathbf{R}\cdot\mathbf{r}\big]_x
=&c'v_1\mathbf{R}\mathbf{R}_x+\frac{cc''-(c')^2}{4c^2}(R_1\mathbf{R}^2-R_1\mathbf{S}^2)v_1
\\
&+\frac{c'}{4c}\big[(\mathbf{R}^2-\mathbf{S}^2)r_1
+2S_1\mathbf{R}\cdot\mathbf{r}-2R_1\mathbf{S}\cdot\mathbf{s}\big].
\end{split}
\end{equation}
Next, by \eqref{R-S-eqn}, \eqref{balance} and \eqref{wz}, we have
\begin{equation}\label{2Rest3}
\begin{split}
&\Big[\frac{c'}{8c^2}(w-z)R_1\mathbf{S}^2\Big]_t
-\Big[c\frac{c'}{8c^2}(w-z)R_1\mathbf{S}^2\Big]_x\\
&=\frac{c'}{8c^2}(w_t-cw_x)R_1\mathbf{S}^2-\frac{c'}{8c^2}(z_t+cz_x)R_1\mathbf{S}^2
+\frac{c'}{4c}z_xR_1\mathbf{S}^2+\frac{c'\mathbf{S}^2}{8c^2}(w-z)(\partial_t R_1-c\partial_xR_1)\\
&\quad+\frac{c'R_1}{8c^2}(w-z)\big(\partial_t \mathbf{S}^2+(c\mathbf{S}^2)_x\big)-\frac{c'R_1}{4c^2}(w-z)(c\mathbf{S}^2)_x
+R_1\mathbf{S}^2(w-z)\big[\big(\frac{c'}{8c^2}\big)_t
-c\big(\frac{c'}{8c^2}\big)_x\big]\\
&=-\frac{(c')^2}{4c^2}\big(v_1+\frac{R_1w-S_1z}{2c}\big)R_1\mathbf{S}^2
+\frac{c'R_1\mathbf{S}^2}{4c}z_x-\frac{c'(w-z)}{2c}R_1\mathbf{S} \cdot\mathbf{S}_x\\
&\quad+\frac{c'n_1\mathbf{S}^2}{32c^4}(w-z)\big[(c^2-\gamma)(\mathbf{R}^2+\mathbf{S}^2)
-2(3c^2-\gamma)\mathbf{R}\cdot\mathbf{S}\big]
\\
&\quad+\frac{cc''-(c')^2}{8c^3}(w-z)R_1S_1\mathbf{S}^2
+\frac{(c')^2}{16c^3}(w-z)R_1(R_1\mathbf{S}^2-S_1\mathbf{R}^2).
\end{split}
\end{equation}
Thanks to \eqref{R-S-eqn}, \eqref{wz}, \eqref{Rxt} and \eqref{2Rest1}--\eqref{2Rest3}, one can get
\begin{equation}\label{2Rest4}
\begin{split}
&\big[\mathbf{R}\cdot\mathbf{r}^*\big]_t-\big[c\mathbf{R}\cdot\mathbf{r}^*\big]_x\\
&=\big[\mathbf{R}\cdot\mathbf{r}\big]_t-\big[c\mathbf{R}\cdot\mathbf{r}\big]_x
+\mathbf{R}\cdot\mathbf{R}_x(w_t-cw_x)+w(\mathbf{R}_t-c\mathbf{R}_x)\cdot\mathbf{R}_x\\
&\quad+\mathbf{R}\cdot\big(\mathbf{R}_{xt}-(c\mathbf{R}_x)_x\big)w
-\Big[\frac{c'}{8c^2}(w-z)R_1\mathbf{S}^2\Big]_t
+\Big[c\frac{c'}{8c^2}(w-z)R_1\mathbf{S}^2\Big]_x\\
&=\frac{cc''-(c')^2}{4c^2}\big(v_1+\frac{R_1w-S_1z}{2c}\big)(\mathbf{R}^2-
\mathbf{S}^2)R_1+\frac{(c')^2}{4c^2}\big(v_1+\frac{R_1w-S_1z}{2c}\big)R_1\mathbf{S}^2
\\
&\quad-\frac{c'}{2c}R_1\mathbf{S} \cdot(\mathbf{s}+\mathbf{S}_xz)+\frac{c'}{2c}S_1\mathbf{R} \cdot(\mathbf{r}+\mathbf{R}_xw)+\frac{c'}{4c}
(\mathbf{R}^2-\mathbf{S}^2)(r_1+w\partial_x R_1)\\
&\quad-\frac{c'R_1\mathbf{S}^2}{4c}z_x-\frac{c'n_1\mathbf{S}^2}{32c^4}(w-z)\big[(c^2-\gamma)(\mathbf{R}^2+\mathbf{S}^2)
-2(3c^2-\gamma)\mathbf{R}\cdot\mathbf{S}\big]
\\
&\quad-\frac{cc''-(c')^2}{8c^3}(w-z)R_1S_1\mathbf{R}^2-\frac{(c')^2}{16c^3}(w-z)R_1(R_1\mathbf{S}^2-S_1\mathbf{R}^2).
\end{split}
\end{equation}
Hence, together \eqref{balance}, \eqref{wxtest} and \eqref{2Rest4}, we can conclude that
\begin{equation*}\label{wRest5}
\begin{split}
&\Big[2\mathbf{R}\cdot\mathbf{r}^*+\mathbf{R}^2w_x
+\frac{c'}{4c^2}(w-z)\mathbf{R}^2S_1\Big]_t
-\Big[c\big(2\mathbf{R}\cdot\mathbf{r}^*+\mathbf{R}^2w_x
+\frac{c'}{4c^2}(w-z)\mathbf{R}^2S_1\big)\Big]_x\\
&=2\big([\mathbf{R}\cdot\mathbf{r}^*]_t-[c\mathbf{R}\cdot\mathbf{r}^*]_x\big)
+\mathbf{R}^2\Big(\big[w_x+\frac{c'}{4c^2}(w-z)S_1\big]_t
-\big[c\big(w_x+\frac{c'}{4c^2}(w-z)S_1\big)\big]_x\Big)\\
&\quad+\big(w_x+\frac{c'}{4c^2}(w-z)S_1\big)[(\mathbf{R}^2)_x-c(\mathbf{R}^2)_x]\\
&=\frac{c'}{2c}S_1\big[2\mathbf{R} \cdot\mathbf{r}^*+\mathbf{R}^2w_x+\frac{c'}{4c^2}(w-z)\mathbf{R}^2S_1\big]
-\frac{c'}{2c}\mathbf{S}^2\big(r_1^*+ R_1w_x+\frac{c'}{4c^2}(w-z)R_1S_1\big)\\
&\quad-\frac{c'}{2c}R_1\big[2\mathbf{S}\cdot\mathbf{s}^*+\mathbf{S}^2z_x
+\frac{c'}{4c^2}(w-z)\mathbf{S}^2R_1\big]
+\frac{c'}{2c}\mathbf{R}^2\big(s_1^*+S_1z_x+\frac{c'}{4c^2}(w-z)R_1S_1\big)\\
&\quad+\frac{cc''-2(c')^2}{2c^2}\big(v_1+\frac{R_1w-S_1z}{2c}\big)
(\mathbf{R}^2S_1-R_1\mathbf{S}^2).
\end{split}
\end{equation*}
This in turn yields
\begin{equation*}
\begin{split}
&\frac{d}{dt}\int_\mathbb{R}J_5^-\mathcal{V}^-\,dx = \frac{d}{dt}\int_\mathbb{R}\Big|2\mathbf{R}\cdot\mathbf{r}^*+\mathbf{R}^2w_x
+\frac{c'}{4c^2}(w-z)\mathbf{R}^2S_1\Big|\mathcal{V}^-\,dx \\
&\leq C\int_\mathbb{R}  \Big|v_1+\frac{R_1w-S_1z}{2c}\Big|\mathbf{R}^2|S_1|\mathcal{V}^-\,dx
+C\int_\mathbb{R}  \Big|v_1+\frac{R_1w-S_1z}{2c}\Big||R_1|\mathbf{S}^2\mathcal{V}^+\,dx  \\
&\quad+ C\int_\mathbb{R} \big|r_1^*+ R_1w_x+\frac{c'}{4c^2}(w-z)R_1S_1\big|\mathbf{S}^2\mathcal{V}^-\,dx\\
&\quad+C\int_\mathbb{R} \big|s_1^*+S_1z_x+\frac{c'}{4c^2}(w-z)R_1S_1\big|\mathbf{R}^2\mathcal{V}^+\,dx\\
&\quad+C\int_\mathbb{R}\Big|2\mathbf{R}\cdot\mathbf{r}^*+\mathbf{R}^2w_x
+\frac{c'}{4c^2}(w-z)\mathbf{R}^2S_1\Big||S_1|\mathcal{V}^-\,dx
\\
&\quad+C\int_\mathbb{R} \Big|2\mathbf{S}\cdot\mathbf{s}^*+\mathbf{S}^2z_x
+\frac{c'}{4c^2}(w-z)\mathbf{S}^2R_1\Big||R_1|\mathcal{V}^+\,dx
\\
&\quad+G(t)\int_\mathbb{R} \Big|2\mathbf{R}\cdot\mathbf{r}^*+\mathbf{R}^2w_x
+\frac{c'}{4c^2}(w-z)\mathbf{R}^2S_1\Big|\mathcal{V}^-\,dx\\
&\quad
-2c_0\int_\mathbb{R} \Big|2\mathbf{R}\cdot\mathbf{r}^*+\mathbf{R}^2w_x
+\frac{c'}{4c^2}(w-z)\mathbf{R}^2S_1\Big|\mathbf{S}^2\mathcal{V}^-\,dx.
\end{split}
\end{equation*}
Therefore, we get the following estimate
\begin{equation}\label{I5est}
\begin{split}
\frac{d}{dt}I_5
\leq&  C\sum_{k=2,5}\int_\mathbb{R}\Big((1+|S_1|)J_k^-\mathcal{V}^-
+(1+|R_1|)J_k^+\mathcal{V}^+\Big)\,dx\\
&+C\int_\mathbb{R}\Big((1+\mathbf{S}^2)J_4^-\mathcal{V}^-
+(1+\mathbf{R}^2)J_4^+\mathcal{V}^+\Big)\,dx\\
&+G(t)I_5-2c_0\int_\mathbb{R} \Big(\mathbf{S}^2J_5^-\mathcal{V}^-+\mathbf{R}^2J_5^+\mathcal{V}^+\Big)\,dx.
\end{split}
\end{equation}

Summing up \eqref{I0est}, \eqref{I1est}, \eqref{I2est}, \eqref{I3est}, \eqref{I4est}, \eqref{I5est} and using \eqref{4.9}, we obtain the desired estimate \eqref{sum}.
\end{proof}

\end{appendices}

\section*{Acknowledgments}
The first author is partially supported by the National Natural Science Foundation of China (No. 11801295), and the Shandong Provincial Natural Science Foundation, China (No. ZR2018BA008). The second author is partially
supported by NSF with grant DMS-2008504. The third author is partially supported by NSF with grant DMS-206218.

\end{document}